\theoremstyle{definition}
\newtheorem{para}{}[section]
\newtheorem{subpara}{}[para]
\newtheorem{remark}[para]{Remark}
\newtheorem{remarks}[para]{Remarks}
\newtheorem{notation}[para]{Notation}
\newtheorem{convention}[para]{Convention}
\newtheorem{definition}[para]{Definition}
\newtheorem{definitions}[para]{Definitions}
\newtheorem{claim}[equation]{Claim}
\newtheorem{example}[para]{Example}
\newtheorem{important}[para]{Important Note}
\newcommand\Alternatives{\begin{enumerate}[(i)]}
\newcommand\EndAlternatives{\end{enumerate}}
\newcommand\Conditions{\begin{enumerate}[(1)]}
\newcommand\EndConditions{\end{enumerate}}
\theoremstyle{plain}
\newtheorem{theorem}[para]{Theorem}
\newtheorem{lemma}[para]{Lemma}
\newtheorem{proposition}[para]{Proposition}
\newtheorem{corollary}[para]{Corollary}
\newtheorem{conjecture}[para]{Conjecture}
\numberwithin{equation}{para}
\numberwithin{figure}{section}
\newcommand\Number{\begin{para}}
\newcommand\EndNumber{\end{para}}
\newcommand\Definition{\begin{definition}}
\newcommand\EndDefinition{\end{definition}}
\newcommand\Definitions{\begin{definitions}}
\newcommand\EndDefinitions{\end{definitions}}
\newcommand\Theorem{\begin{theorem}}
\newcommand\EndTheorem{\end{theorem}}
\newcommand\Conjecture{\begin{conjecture}}
\newcommand\EndConjecture{\end{conjecture}}
\newcommand\Remark{\begin{remark}}
\newcommand\EndRemark{\end{remark}}
\newcommand\Remarks{\begin{remarks}}
\newcommand\EndRemarks{\end{remarks}}
\newcommand\Convention{\begin{convention}}
\newcommand\EndConvention{\end{convention}}
\newcommand\Notation{\begin{notation}}
\newcommand\EndNotation{\end{notation}}
\newcommand\Lemma{\begin{lemma}}
\newcommand\EndLemma{\end{lemma}}
\newcommand\Proposition{\begin{proposition}}
\newcommand\EndProposition{\end{proposition}}
\newcommand\Corollary{\begin{corollary}}
\newcommand\EndCorollary{\end{corollary}}
\newcommand\Claim{\begin{claim}}
\newcommand\EndClaim{\end{claim}}
\newcommand\Proof{\begin{proof}}
\newcommand\EndProof{\end{proof}}
\newcommand\Equation{\begin{equation}}
\newcommand\EndEquation{\end{equation}}
\newcommand\Bullets{\begin{itemize}}
\newcommand\EndBullets{\end{itemize}}
\newcommand\calAC{\mathcal{AC}}
\newcommand\calBC{\mathcal{BC}}
\newcommand\calc{{\mathcal C}}
\newcommand\cale{\mathcal{E}}
\newcommand\calf{\mathcal{F}}
\newcommand\cals{\mathcal{S}}
\newcommand\calt{{\mathcal T}}
\newcommand\co{\colon\thinspace}
\newcommand\bfb{\mathbf{b}}
\newcommand\bc{\mathbf{c}}
\newcommand\bd{\mathbf{d}}
\begin{document}

\title[Bounding areas of centered dual two-cells]{Bounding the area of a centered dual two-cell below, given lower bounds on its side lengths}

\author{Jason DeBlois}
\address{Department of Mathematics\\
University of Pittsburgh\\
301 Thackeray Hall\\
Pittsburgh, PA 15260}
\email{jdeblois@pitt.edu}

\begin{abstract}  For a locally finite set $\cals$ in the hyperbolic plane, suppose $C$ is a compact, $n$-edged two-cell of the centered dual complex of $\cals$, a coarsening of the Delaunay tessellation introduced in the author's prior work.  We describe an effectively computable lower bound for the area of $C$, given an $n$-tuple of positive real numbers bounding its side lengths below, and for $n\leq 9$ implement an algorithm to compute this bound.  For geometrically reasonable side-length bounds, we expect the area bound to be sharp or near-sharp.\end{abstract}

\maketitle

This paper upgrades the centered dual machine, which the author used in \cite{DeB_Voronoi} to give sharp upper bounds on the maximal injectivity radius of complete, orientable, finite-area hyperbolic surfaces.  Two-cells of the centered dual complex are obtained by grouping Delaunay cells that are not ``centered'', with the goal of producing area bounds from side length bounds.  (The construction is more thoroughly reviewed in Section \ref{intro} below.)  Theorem 3.31 of \cite{DeB_Voronoi}, one of the main results of that paper, realizes this goal.  It gives a lower bound in terms of $d>0$, on the area of an arbitrary centered dual two-cell with all edge lengths at least $d$.  

The main result of this paper generalizes and strengthens that one.  Below for a tree $T$ we refer by the \textit{frontier} of $T$ to the collection of edges of some ambient graph that intersect $T$ but do not lie in it; we assume that each frontier edge has exactly one vertex in $T$.

\newcommand\Upgrade{Let $C$ be a compact two-cell of the centered dual complex of a locally finite set $\cals\subset\mathbb{H}^2$ such that for some $\bfb = (b_1,\hdots,b_n)\in(\mathbb{R}^+)^n$ and enumeration of the edges of $C$, the $i^{\mathit{th}}$ edge has length at least $b_i$ for each $i$.  Then $\mathrm{area}(C)\geq\min\{B_T(\sigma(\bfb))\,|\,T\in\calt_n,\sigma\in S_n\}$, where $B_T$ is the area bounding function defined in Proposition \ref{centered lower}, $S_n$ is the symmetric group on $n$ letters, $\sigma\in S_n$ acts on $\bfb$ by permutation of entries, and $\calt_n$ is the collection of compact, rooted trees $T$ with frontier $\calf$ of order $n$ and each vertex trivalent in $V = T\cup\bigcup_{f\in\calf} f$.}
\newtheorem*{UpgradeThrm}{Theorem \ref{upgrade}}
\begin{UpgradeThrm}\Upgrade\end{UpgradeThrm}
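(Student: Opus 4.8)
The plan is to deduce the bound from Proposition~\ref{centered lower} by identifying the combinatorial type of the Delaunay decomposition of $C$ with a tree in $\calt_n$, and then observing that an unknown element of a known finite family is bounded below by the minimum over that family.

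First I would record the structure of $C$ as a union of Delaunay cells. By the construction reviewed in Section~\ref{intro} (see also \cite{DeB_Voronoi}), a compact centered dual two-cell $C$ is a union of finitely many Delaunay cells of $\cals$, glued along shared Delaunay edges; in the generic case each such cell is a geodesic triangle, and the non-generic case (a Delaunay cell with more than three sides) is reached by a limiting argument from generic configurations of $\cals$, along which areas and edge lengths vary continuously. The essential structural input is that no point of $\cals$ lies in the interior of $C$; this I would derive from the ``parent'' relation among non-centered cells that governs the merging rule producing $C$, which flows radially from the root cell and so never closes up around a vertex. Granting this, $C$ is a triangulated topological disk with $n$ boundary edges, and an Euler-characteristic count forces exactly $n-2$ triangles, $n-3$ interior edges, and no interior vertex; hence the dual graph $T$ of the decomposition --- one node per triangle, one edge per shared side --- is a tree with $n-2$ nodes.

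Next I would check that $T\in\calt_n$ and set up the required bijection on edges. The three sides of each triangle are partitioned into sides shared with another cell of $C$, which are the edges of $T$, and sides lying on $\partial C$; there are $n$ of the latter, they are the frontier edges $\calf$, and every node is accordingly trivalent in $V = T\cup\bigcup_{f\in\calf}f$. Regarding $T$ as rooted in the manner prescribed by the centered dual construction, we obtain $T\in\calt_n$, and the above partition identifies the $n$ edges of $C$ with $\calf$. Proposition~\ref{centered lower} now supplies, for each $T'\in\calt_n$, the area bounding function $B_{T'}$, whose defining property is that any centered dual two-cell of combinatorial type $T'$ with $j^{\mathit{th}}$ frontier edge of length at least $c_j$ has area at least $B_{T'}(\bc)$. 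Applying this to $T$, with the lower bounds obtained by transporting $\bfb$ along the identification just made, yields $\mathrm{area}(C)\geq B_T(\sigma(\bfb))$, where $\sigma\in S_n$ compares the enumeration of the edges of $C$ assumed in the hypothesis with the enumeration of $\calf$ built into $B_T$. Since $\calt_n\times S_n$ is finite --- the count above bounds $|V(T')|$ by $n-2$, so there are only finitely many such rooted trees --- it follows at once that $\mathrm{area}(C)\geq\min\{B_{T'}(\sigma'(\bfb))\,|\,T'\in\calt_n,\ \sigma'\in S_n\}$, and since each $B_{T'}$ is effectively computable so is this minimum. It does no harm that $\calt_n$ may contain trees realized by no actual two-cell, since enlarging the index set of a minimum can only decrease it while keeping the bound valid.

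The main obstacle is the structural part of the second and third paragraphs: establishing that $C$ has no interior $\cals$-vertex, that its cells may be taken to be triangles, and that the resulting rooted tree is genuinely an element of $\calt_n$ for which $C$ is an admissible instance of $B_T$. This is exactly where the ``centered dual'' bookkeeping of \cite{DeB_Voronoi} must be translated carefully into the rooted-tree language introduced here, and I would expect the bulk of the argument to go into that translation; once it is done, the passage to the minimum over $\calt_n\times S_n$ is purely formal, Proposition~\ref{centered lower} having absorbed all of the hyperbolic-geometric content.
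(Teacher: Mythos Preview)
Your broad strategy matches the paper's: identify $C$ with a rooted tree, apply Proposition~\ref{centered lower}, then minimize over $\calt_n\times S_n$. But you diverge from the paper at the reduction to trivalent trees, and there is a genuine gap there.

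You propose handling Delaunay cells with more than three sides by perturbing $\cals$ to a generic configuration and passing to the limit. The paper does not do this. It takes $T$ to be the component of non-centered Voronoi edges dual to $C$, whose vertices may have valence $n_v\geq 3$ in $V$, cites Lemma~3.14 of \cite{DeB_Voronoi} to place the resulting $\bd=(\bd_{\cale},\bd_{\calf})$ in $\overline{\mathit{Ad}}(\bd_{\calf})$ with $\mathrm{area}(C)=D_T(\bd)$, and then invokes Lemma~\ref{trivalent} to produce a trivalent $T_0\in\calt_n$ with $\min\{D_{T_0}(\bd_0)\,|\,\bd_0\in\overline{\mathit{Ad}}(\bd_{\calf_0})\}\leq D_T(\bd)$. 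Only then is Proposition~\ref{centered lower} applied. Your perturbation route is not obviously equivalent: when a high-valence Voronoi vertex splits, the new short Voronoi edge may be centered, in which case $T$ disconnects and $C$ is no longer a single centered dual cell of the perturbed set. Arranging the perturbation to avoid this is not automatic and you do not address it. Lemma~\ref{trivalent} bypasses the issue entirely by working algebraically inside the admissible-space formalism rather than geometrically on $\cals$.

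A smaller point: your paraphrase of Proposition~\ref{centered lower} as directly bounding areas of centered dual two-cells skips a step. That proposition bounds $D_T$ on $\overline{\mathit{Ad}}(\bd_{\calf})$; the passage from ``area of $C$'' to ``value of $D_T$ at a point of the admissible space'' is exactly Lemma~3.14 of \cite{DeB_Voronoi}, which you should cite rather than fold into the statement of Proposition~\ref{centered lower}.
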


For a self-contained but necessarily more elaborate statement of this result see Corollary \ref{better upgrade}.  Theorem \ref{upgrade} generalizes \cite[Thrm.~3.31]{DeB_Voronoi} by allowing different bounds for the lengths of different edges of $C$.  Even when all edge length bounds are the same, the area bound offered by Theorem \ref{upgrade} is stronger than that of its predecessor for $n>4$ (it is identical for $n=3$ or $4$).  See Proposition \ref{bigger than}.  In fact, we expect it to be sharp for ``geometrically reasonable'' edge length bounds, see Remark \ref{reasonable}.

I intend to use this in the future to study arc length spectra of hyperbolic surfaces.  I used Theorem 3.31 of \cite{DeB_Voronoi} there to prove that paper's sharp upper bound on maximal injectivity radius; or, equivalently, on the length of the shortest non-constant geodesic arc based at a point $p$ (by which I mean one with both endpoints at $p$, but possibly a corner there).  Theorem \ref{upgrade} can be used analogously to bound, say, the length of the second-shortest such arc as a function of the length of the shortest.  Problems of this nature arise naturally when studying hyperbolic three-manifolds with totally geodesic boundary; see eg.~\cite{KM}.

The additional strength and generality of Theorem \ref{upgrade} comes at considerable computational expense.  Whereas the bound of \cite[Thrm.~3.31]{DeB_Voronoi} is given by a formula requiring essentially a single computation, evaluating the bound here for a two-cell with $n$ edges requires performing three computational tasks: enumerating $S_n$, enumerating $\calt_n$, and evaluating $B_T(\sigma(\bfb))$ for each one.  The first problem alone has complexity which is at least factorial in $n$.

Section \ref{practice} describes a Python module, \textit{minimizer.py} containing a script \textit{minimize()} that computes the bounds of Theorem \ref{upgrade}.  To do so it calls an existing Python script, \textit{itertools.permutations()}, for enumerating permutations; a script \textit{treecrawler(,)} (also in \textit{minimizer.py}) that computes $B_T(\bfb)$, given $T$ and $\bfb$; and a hand-compiled library, \textit{forest.txt}, of trees in $\calt_n$ for $n\leq 9$.  So it can actually only compute bounds for cells with up to nine edges.

\begin{remark} The ancillary materials include \textit{minimizer.py} and \textit{forest.txt}.  After downloading them only one modification is required to run \textit{minimizer.minimize()} in a Python 2.7.$n$ interpreter (and possibly others).  See the beginning of Section \ref{practice}.\end{remark}

It is certainly possible to write an algorithm to enumerate $\calt_n$ for arbitrary $n$, and hence to remove the limitation to $n\leq 9$.  However enumerating it without redundancy seems more involved, so in any case it is useful to have a classification in low complexity (see Figure \ref{rooted trees}).  And this is enough for our purpose here, which is just to get a sense for how the bounds of Theorem \ref{upgrade} behave.  We carry this out by exploring a few examples in Section \ref{examples}.

Section \ref{intro} introduces the centered dual decomposition and establishes notation.  We prove Theorem \ref{upgrade} in Section \ref{theory} by deepening some aspects of the argument in \cite{DeB_Voronoi}.  Of particular note, Corollary \ref{update} significantly improves Proposition 3.23 of \cite{DeB_Voronoi}, a key result limiting which points in the ``admissible space'' $\overline{\mathit{Ad}}(\bd_{\calf})$ can minimize the area function $D_T$.

%%%%%%%%%%%%%%%%%%%%%%%%%%%%%
\section{The geometric and centered dual decompositions}\label{intro}
%%%%%%%%%%%%%%%%%%%%%%%%%%%%%

Here we will give a brief, conceptual introduction to the subject of this paper, culminating in a description of compact two-cells of the centered dual decomposition determined by a finite subset of a hyperbolic surface.  The picture we describe here is fully fleshed out in \cite{DeB_Voronoi}, and we refer the reader there for details, proofs, and the general case.  In subsection \ref{notation} we establish notation that we will use in the remainder of the paper.

Suppose $\cals$ is a locally finite subset of $\mathbb{H}^2$.  The \textit{Voronoi tessellation} of $\cals$  is a locally finite convex polygonal decomposition of $\mathbb{H}^2$ with two-cells in bijective correspondence with $\cals$.  For each $s\in\cals$, the Voronoi two-cell containing $s$ is
$$V_s = \{x\in\mathbb{H}^2\,|\,d(x,s)\leq d(x,s')\ \mbox{for all}\ s'\in \cals\}.$$
Each Voronoi vertex $v$ is of the form $\bigcap_{i=1}^n V_{s_i}$ for a finite collection $\{s_i\}\subset\cals$ such that $d(s_i,v)\equiv J$ is minimal among all $s\in\cals$.  The \textit{geometric dual two-cell} dual to $v$ is the convex hull of the $s_i$.  It is \textit{cyclic}; ie.~inscribed in a circle, its \textit{circumcircle}, which has radius $J$ and center $v$.  See \cite[\S 5]{DeB_Delaunay}, and Theorem 5.9 there in particular, and cf.~\cite[\S 1]{DeB_Voronoi}.

We say a geometric dual two-cell is \textit{centered} if the center of its circumcircle (ie.~its dual Voronoi vertex) is contained in its interior.  A Voronoi edge $e$ is centered if it intersects its geometric dual edge (which joins $s$ to $s'$ if $e = V_s\cap V_{s'}$) in its interior; if $e$ is not centered we orient it pointing away from its geometric dual edge.  The two notions of centeredness are related: if $C$ is a non-centered geometric dual two-cell then its dual Voronoi vertex is the initial point of a non-centered Voronoi edge, and the geometric dual to the initial vertex of every non-centered Voronoi edge is non-centered \cite[Lemma 2.5]{DeB_Voronoi}.

We use components of the union of non-centered Voronoi edges to organize centered dual two-cells.  A compact such component is a finite, rooted tree $T$ with all edges pointing toward its root vertex $v_T$.  Circumcircle radius increases in the orientation direction of non-centered edges \cite[L.~2.3]{DeB_Voronoi}, so $v_T$ is also characterized as the vertex whose geometric dual two-cell has maximal circumcircle radius.  See Lemma 2.7, Definition 2.8, and Proposition 2.9 of \cite{DeB_Voronoi}.

A compact centered dual two-cell $C$ is either the geometric dual to a single Voronoi vertex contained in only centered edges (the \textit{centered case}), or it is the union of geometric duals to vertices of a compact component of the union of non-centered Voronoi edges.  In this case we say $C$ is dual to $T$ (see \cite[Definition 2.11]{DeB_Voronoi}); in the centered case we say $C$ is dual to $T = \{v\}$, its dual Voronoi vertex.

The \textit{frontier} of a component $T$ of the union of non-centered Voronoi edges is the set of $(e,v)$ such that $e$ is a Voronoi edge not contained in $T$ and $v\in e\cap T$ is a vertex of $e$.  (So if both vertices of $e\not\subset T$ are in $T$ then $e$ contributes twice to the frontier of $T$.)  For such a tree $T$, the \textit{edge set} of the centered dual two-cell $C$ dual to $T$ is the collection of geometric duals to Voronoi edges contributing to the frontier of $T$, counted with multiplicity.  Figure \ref{non-centered five} illustrates all combinatorial possibilities for compact centered dual two-cells with five edges.

\begin{figure}
\includegraphics{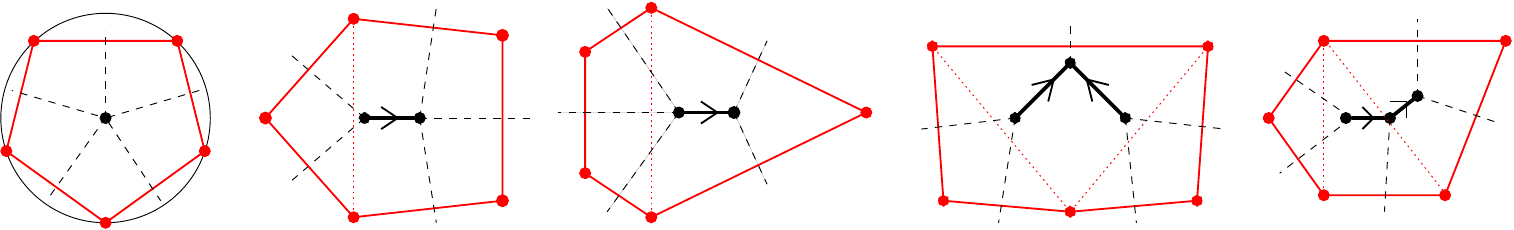}
\caption{The possible five-edged centered dual two-cells, outlined in red, and for each its dual tree $T$ (black, bold) and the frontier of $T$ (black, dashed).}
\label{non-centered five}
\end{figure}

A convex cyclic polygon is determined up to isometry by its set of edge lengths \cite[Prop.~1.8]{DeB_cyclic_geom}, but this is not true of a centered dual two-cell $C$ that is dual to a component $T$ of the union of non-centered Voronoi edges.  However, the geometry of $C$ is \textit{constrained} by its set of edge lengths.  The strategy of \cite{DeB_Voronoi} is to abstract these constraints, defining an \textit{admissible space} parametrizing all possibilities for a two-cell $C$ with fixed combinatorics and edge length collection.  The areas of such possibilities are measured by a continuous function on the admissible space, and we produce area bounds by analyzing minima of this function.

%%%%%%%%%%%%%%%%%%%
\subsection{Notation}\label{notation} The notation we use for abstracting the study of geometric dual two-cells was introduced in Section 3.2 of \cite{DeB_Voronoi}.  We let $T$ denote a rooted tree and call its root vertex $v_T$.  We always implicitly regard $T$ as embedded in some ambient graph in which each vertex $v$ of $T$ has valence $n_v\geq 3$, and we take the \textit{frontier} $\calf =\{f_1,\hdots,f_n\}$ of $T$ to be the collection of edges of this ambient graph that intersect $T$ but do not lie in it.

We also implicitly assume that each frontier edge $f_i$ has exactly one vertex in $T$.  In the geometric context this could fail; that is, there may exist a component $T$ of the union of non-centered Voronoi edges and a Voronoi edge $e$ with distinct vertices $v$ and $w$, such that $e\cap T = \{v,w\}$. If so then when passing to the abstract context we would denote the edge-vertex pairs $(e,v)$ and $(e,w)$ as $f_i$ and $f_j$, respectively, for some $i\neq j$.

We denote the edge set of $T$ by $\cale$ and study tuples $\bd = (\bd_{\cale},\bd_{\calf})$, where $\bd_{\cale}\in(\mathbb{R}^+)^{\cale}$ and $\bd_{\calf}\in(\mathbb{R}^+)^{\calf}$ are collections of positive real numbers indexed by $\cale$ and $\calf$, respectively.  For a vertex $v$ of $T$ contained in edges $e_1,\hdots,e_{n_v}\in\cale\cup\calf$ and any such $\bd$, we let $P_v(\bd)$ denote the $n_v$-tuple $(d_{e_1},\hdots,d_{e_{n_v}})$ of entries of $\bd$.  

The idea here is that for a centered dual two-cell $C$ dual to $T$, $\bd_{\calf}$ records the set of its edge lengths, since boundary edges of $C$ are dual to frontier edges of $T$ by \cite[Dfn.~2.11]{DeB_Voronoi}.  (Here if both $f_i$ and $f_j$ correspond to a single Voronoi edge as above we take $d_{f_i} = d_{f_j}$ to be the length of the geometric dual to $e$.)  And $\bd_{\cale}$ records the set of lengths of geometric dual edges \textit{internal} to $C$; ie.~edges of intersection between pairs of geometric dual cells contained in $C$.  For such a geometric dual cell with dual Voronoi vertex $v$, $P_v(\bd)$ records its edge length collection, where $\bd = (\bd_{\cale},\bd_{\calf})$. 

Definition 3.10 of \cite{DeB_Voronoi} describes the admissible space $\mathit{Ad}_T(\bd_{\calf})$ of a given $T$ and $\bd_{\calf}$.  For a centered dual two-cell $C$ dual to $T$ with edge length collection $\bd_{\calf}$, with $\bd_{\cale}$ is produced as above, Lemma 3.14 there shows that $\bd = (\bd_{\cale},\bd_{\calf})$ lies in $\mathit{Ad}_T(\bd_{\calf})$.  It is more convenient in practice to deal with a compact space $\overline{\mathit{Ad}}_T(\bd_{\calf})$ containing $\mathit{Ad}(\bd_{\calf})$, which is defined in  \cite[Dfn.~3.15]{DeB_Voronoi}.  We reproduce this below.  There for $v\in T^{(0)}-\{v_T\}$, let $e_v$ be the initial edge of the edge arc joining $v$ to $v_T$.

\begin{definition}[\cite{DeB_Voronoi}, Definition 3.15]\label{admissible closure}For $\bd_{\calf} = (d_e\,|\,e\in\calf)\in(\mathbb{R}^+)^{\calf}$ let $\overline{\mathit{Ad}}(\bd_{\calf})$ consist of those $\bd = (\bd_{\cale},\bd_{\calf})$, for $\bd_{\cale}\in(\mathbb{R}^+)^{\cale}$, such that:\begin{enumerate}
  \item For $v\in T^{(0)} - \{v_T\}$, $P_v(\bd)\in\calAC_{n_v}-\calc_{n_v}$ has largest entry $d_{e_v}$.
  \item $P_{v_T}(\bd)\in\calc_{n_T}\cup\calBC_{n_T}$, where we refer by $n_T$ to the valence $n_{v_T}$ of $v_T$ in $V$.
  \item $J(P_v(\bd))\geq J(P_w(\bd))$ for each $v\in T^{(0)}$ and $w\in v-1$, where $v-1$ is the set of vertices $w\in T^{(0)} - \{v_T,v\}$ such that $v\in e_w$.\end{enumerate}\end{definition}

Here $\calAC_{n}\subset(\mathbb{R}^+)^{n}$ is the open set parametrizing cyclic $n$-gons by their side lengths, see \cite[Corollary 1.10]{DeB_cyclic_geom}.  Its subsets $\calc_n$ and $\calBC_n$  respectively parametrize \textit{centered} and \textit{semicyclic} $n$-gons, those with circumcircle centers in their interiors or, respectively, in a side.  See Propositions 1.11 and 2.2 of \cite{DeB_cyclic_geom}.  Condition (3) above refers to the function $J\co\calAC_n\to\mathbb{R}^+$ that records circumcircle radius of cyclic $n$-gons \cite[Prop.~1.14]{DeB_cyclic_geom}.

\begin{remark}\label{too good}  Below at times we will take $T = \{v_T\}$ (the centered case) as the base case of an inductive argument.  In this case since $\cale$ is empty we omit $\bd_{\cale}$.  Conditions (1) and (3) of Definition \ref{admissible closure} hold vacuously, so appealing to condition (2) we take $\overline{\mathit{Ad}}(\bd_{\calf}) = \{\bd_{\calf}\}$ if $P_{v_T}(\bd_{\calf})\in\calc_{n_T}\cup\calBC_{n_T}$ and $\overline{\mathit{Ad}}(\bd_{\calf}) = \emptyset$ otherwise.\end{remark}

Definition 3.13 of \cite{DeB_Voronoi} introduces the \textit{area function} $D_T(\bd) = \sum_{v\in T^{(0)}} D_0(P_v(\bd))$, where $D_0$ from \cite[Cor.~1.17]{DeB_cyclic_geom} is the function $\calAC_n\to\mathbb{R}$ that measures the area of cyclic $n$-gons \cite[L.~2.1]{DeB_cyclic_geom}.  $D_T$ measures area of centered dual two-cells \cite[L.~3.14]{DeB_Voronoi}.  It is continuous on $\overline{\mathit{Ad}}_T(\bd_{\calf})$ \cite[L.~3.22]{DeB_Voronoi}, and our primary aim here is to understand its minimizers and minima on this set.

%%%%%%%%%%%%%%%
\section{Theory}\label{theory}
%%%%%%%%%%%%%%%

In this section we will prove Theorem \ref{upgrade}, the generalization of Theorem 3.31 of \cite{DeB_Voronoi} described in the introduction.  We follow the broad strokes of the approach in \cite{DeB_Voronoi}.  First, in Section \ref{minimizer} we prove Corollary \ref{update}, an upgrade of \cite[Prop.~3.23]{DeB_Voronoi} that characterizes minimizers of $D_T$ on $\overline{\mathit{Ad}}_T(\bd_{\calf})$ for a given $\bd_{\calf}$.  We apply this in Section \ref{alla the marbles} to prove Proposition \ref{centered lower}, an improvement on \cite[Prop.~3.30]{DeB_Voronoi} that bounds $D_T(\bd)$ below by a function $B_T(\bfb_{\calf})$ for all $\bd\in\overline{\mathit{Ad}}_T(\bd_{\calf})$ where each entry of $\bd_{\calf}$ is bounded below by the corresponding entry of $\bfb_{\calf}$.

The results above apply to a fixed rooted tree $T$.  We prove Theorem \ref{upgrade} by simply minimizing over all trees and using one new ingredient: Lemma \ref{trivalent}, which allows us to reduce to the trivalent case.  This Lemma reverses the action of Lemma 3.28 of \cite{DeB_Voronoi}, showing for some trees $T$ that $\overline{\mathit{Ad}}_T(\bd_{\calf})\subset \overline{\mathit{Ad}}_{T_0}(\bd_{\calf_0})$ for a related tree $T_0$ with \textit{more} edges than $T$.

\subsection{Minimizers on $\overline{\mathit{Ad}}_T(\bd_{\calf})$}\label{minimizer} Proposition 3.23 of \cite{DeB_Voronoi} supplies a key tool for giving lower bounds on areas of centered dual two-cells, asserting that each minimum (in fact each local minimum) point of $D_T$ on $\overline{\mathit{Ad}}(\bd_{\calf})$ satisfies one of three criteria listed there.  Here we will more closely analyze the situations described there.

\begin{lemma}\label{many polys}For a compact rooted tree $T\subset V$ with root vertex $v_T$, edge set $\cale$, and frontier $\calf = \{f_0,\hdots,f_{k-1}\}$, given $\bd_{\calf} = (d_{f_0},\hdots,d_{f_{k-1}})\in(\mathbb{R}^+)^{\calf}$ and $\bd_{\cale}\in(\mathbb{R}^+)^{\cale}$ let $\bd = (\bd_{\cale},\bd_{\calf})$.  Suppose such a tuple $\bd$ has the following properties:\begin{enumerate}
  \item For each $v\in T^{(0)} - \{v_T\}$, $P_v(\bd)\in \calAC_{n_v}-(\calc_{n_v}\cup\calBC_{n_v})$ has largest entry $d_{e_v}$, where $n_v$ is the valence of $v$ in $V$ and $e_v$ is the initial edge of the arc in $T$ joining $v$ to $v_T$;
  \item $P_{v_T}(\bd)\in\calAC_{n_T}$, where $n_T=n_{v_T}$, and if $P_{v_T}\notin\calc_{n_T}$ then $e\in\calf$ if $d_e$ is maximal among entries of $P_{v_T}(\bd)$; and
  \item $J(P_v(\bd)) = J(P_{v_T}(\bd))$ for each $v\in T^{(0)}$, where $J(P_v(\bd))$ is the circumcircle radius of the cyclic polygon with edge length collection $P_v(\bd)$.\end{enumerate}
Then $\bd_{\calf}$ is in $\calAC_k$, and in $\calc_k$ or $\calBC_k$ if and only if $P_{v_T}(\bd)$ is in $\calc_{n_T}$ or $\calBC_{n_T}$, respectively.  If $\bd_{\calf}\notin\calc_k$ then there is a unique $f_i\in\calf$ such that $d_{f_i}$ is maximal, with $v_T\in f_i$ and $d_{f_i}>d_e$ for all $e\in(\cale\cup\calf)-\{f_i\}$. Also, $J(\bd_{\calf}) = J(P_v(\bd))$ for each $v\in T^{(0)}$, and\begin{align}\label{radius equal sum} D_0(\bd_{\calf}) = \sum_{v\in T^{(0)}} D_0(P_v(\bd)).\end{align}
Moreover, a cyclic $k$-gon with side length collection $\bd_{\calf}$ is tiled by cyclic $n_v$-gons $P_v$ with side length collections $P_v(\bd)$, where $v$ runs over all vertices of $T$.\end{lemma}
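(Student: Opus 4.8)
The plan is to induct on the number of edges of $T$, building the tiled cyclic $k$-gon from the root outward and tracking circumradii. The base case is $T = \{v_T\}$: here $\cale = \emptyset$, $\calf$ consists of the $n_T$ edges at $v_T$, and $\bd = \bd_{\calf} = P_{v_T}(\bd)$, so every assertion is immediate with the degenerate ``tiling'' being the polygon itself. For the inductive step, pick a leaf vertex $w$ of $T$ distinct from $v_T$, i.e.\ a vertex all of whose edges but $e_w$ lie in $\calf$. Let $T' = T - \{w\}$ with its inherited root $v_T$; then $e_w$ becomes a frontier edge of $T'$, while the other edges of $\calf$ at $w$ are removed. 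By hypothesis (1), $P_w(\bd)\in\calAC_{n_w} - (\calc_{n_w}\cup\calBC_{n_w})$ has largest entry $d_{e_w}$, so the cyclic $n_w$-gon $P_w$ is strictly non-centered with the arc subtended by its longest side $e_w$ not containing the circumcenter; its circumradius is $J(P_w(\bd))$, which by (3) equals $J(P_{v_T}(\bd))$.

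The key gluing fact I would establish (or cite from \cite{DeB_cyclic_geom}) is that if one excises from a cyclic $n_w$-gon the longest side and replaces it by the rest of the boundary, the resulting shorter chord has length equal to the remaining side of a cyclic polygon inscribed in the \emph{same} circle — more precisely, removing the triangle/region on the far side of the longest side's subtending chord. Since $P_w$ is non-centered with $d_{e_w}$ longest, $d_{e_w}$ is less than the diameter, and the ``complementary'' chord replacing the other $n_w - 1$ sides of $P_w$, call its length $d_{e_w}'$, satisfies: the polygon obtained from $P_w$ by deleting those $n_w-1$ sides and inserting the chord $d_{e_w}'$ is again cyclic, inscribed in the radius-$J(P_w(\bd))$ circle, and its area is $D_0(P_w(\bd))$ less than that of $P_w$ plus the area of the cut-off piece. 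Running this in reverse: define $\bd'$ on $\cale' \cup \calf'$ for $T'$ by keeping all old entries except replacing the role of $d_{e_w}$ appropriately; one checks that $\bd'$ satisfies hypotheses (1)–(3) for $T'$ (the new frontier edge $e_w$ at its other endpoint carries the value forced by the circumradius constraint, and the longest-entry conditions propagate because $J$ is unchanged everywhere). Apply the inductive hypothesis to $T'$: $\bd_{\calf'}\in\calAC_{k'}$ (with $k' = k - (n_w - 1)$), a cyclic $k'$-gon with side lengths $\bd_{\calf'}$ is tiled by the $P_v$, $v\in T'^{(0)}$, and $D_0(\bd_{\calf'}) = \sum_{v\in T'^{(0)}} D_0(P_v(\bd'))$, all inscribed in the radius-$J$ circle. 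Now glue $P_w$ back onto this cyclic $k'$-gon along the side of length $d_{e_w}'$: because both are inscribed in the same circle of radius $J = J(P_{v_T}(\bd))$ and the side $d_{e_w}'$ subtends the correct arc, the union is a cyclic $k$-gon whose side-length collection is exactly $\bd_{\calf}$ (the $n_w - 1$ frontier edges of $T$ at $w$ reappear, replacing the single chord $d_{e_w}'$), inscribed in the same circle; hence $\bd_{\calf}\in\calAC_k$ with $J(\bd_{\calf}) = J$, and additivity of area gives \eqref{radius equal sum}. The centered/semicyclic dichotomy transfers because whether the circumcenter lies in the interior, on a side, or outside the $k$-gon is determined by $P_{v_T}$: the center's position relative to the whole tiled polygon agrees with its position relative to $P_{v_T}$ (all other tiles sit on the near side of their root-edge chords, so they don't ``hide'' the center), and when $\bd_{\calf}\notin\calc_k$ the unique longest side is the one at $v_T$ that is longest in $P_{v_T}(\bd)$, which by (2) lies in $\calf$, and strictness of $d_{f_i} > d_e$ for all other $e$ follows from the strict non-centeredness in (1) at every non-root vertex together with the semicyclic/non-centered structure at $v_T$.

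The main obstacle I anticipate is the bookkeeping in the inductive step — precisely specifying $\bd'$ on the frontier of $T'$ so that hypotheses (1)–(3) hold, in particular verifying that the ``replacement chord'' length $d_{e_w}'$ is the largest entry of $P_{v_T}(\bd')$-type tuples along the path from $w$ to the root when it needs to be, and that no new maximum is accidentally created violating (1) or (2). This requires the monotonicity of chord length in subtended arc at fixed circumradius, plus the observation that excising a non-centered ``ear'' strictly decreases the relevant chord, so the longest-entry and circumradius-equality conditions are preserved rather than merely weakly so. A secondary subtlety is the case where $w - 1 \neq \emptyset$ is impossible for a leaf but one must be careful that ``leaf of $T$'' is taken in the tree $T$ itself (not in $V$), so that exactly one edge at $w$ lies in $\cale$; the non-root, non-leaf vertices are handled purely by the induction since we always strip a leaf. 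Everything else — continuity of $J$ and $D_0$, the parametrization results for $\calAC_n$, $\calc_n$, $\calBC_n$ — is quoted from \cite{DeB_cyclic_geom}.
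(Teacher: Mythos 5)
Your proposal follows essentially the same route as the paper: strip a leaf $v_0\neq v_T$ (the paper takes a vertex farthest from $v_T$), apply induction to $T-e_{v_0}$, whose frontier acquires $e_{v_0}$, and glue $P_{v_0}$ back onto the inductively produced cyclic polygon along a common chord of the shared circumcircle, reading off the $\calc/\calBC$ dichotomy and the unique-longest-side claim from the position of the circumcenter relative to the chords dual to the edges $e_v$ (the paper's appeal to \cite[Prop.~2.2]{DeB_cyclic_geom}). The ``main obstacle'' you flag dissolves on inspection: there is no new replacement-chord length $d_{e_w}'$ to determine, since the gluing chord is exactly the side of $P_w$ dual to $e_w$, whose length $d_{e_w}$ is already an entry of $\bd_{\cale}$, so $\bd'$ is literally a sub-tuple of $\bd$ with that entry relabeled as a frontier entry, $P_v(\bd')=P_v(\bd)$ for every $v\in T'^{(0)}$, and hypotheses (1)--(3) for $T'$ are inherited verbatim; the genuine work (which the paper does by strengthening \cite[Lemma 3.25]{DeB_Voronoi}) is concentrated in the single-gluing step, namely checking that $P_w$ and the $k'$-gon lie on opposite sides of that chord and that the circumcenter sits where $P_{v_T}$ dictates.
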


\begin{proof}  Note that if $T=\{v_T\}$ then hypotheses (1) and (3) hold vacuously, and the result is a tautology.  Below we will first address the case that $T$ has one edge, then prove the general case by induction.  The one-edge case is an altered version of Lemma 3.25 of \cite{DeB_Voronoi}, with a stronger conclusion and a subtly stronger hypothesis.  That result takes as input an $m$-tuple $\bc_0 = (c_0,\hdots,c_{m-1})$ and an $n$-tuple $\bd_0 = (d_0,\hdots,d_{n-1})$.  We will apply it here in the one-edged case with $\bd_0 = P_{v_T}(\bd)$ and $\bc_0=P_v(\bd)$, where $v$ is the other vertex of $T$.  Our hypothesis (3) above implies the first bulleted hypothesis there, that $J(\bc_0) = J(\bd_0)$; (1) implies the second bullet there with $d_{e_v}$ here in the role of $d_0 = c_0$ there, where $e_v$ is the sole edge of $T$; and (2) here implies the third.  In the notation of \cite[L.~3.25]{DeB_Voronoi}, (2) in fact asserts:
\begin{itemize} \item $\bd_0\in\calAC_n$, and if $\bd_0\notin\calc_n$ then $d_0$ is not maximal among the $d_i$.\end{itemize}
This excludes the possibility that $\bd_0\in\calBC_n$ has maximal entry $d_0$, which was allowed in the third bulleted hypothesis of \cite[L.~3.25]{DeB_Voronoi}.  In any case, since its hypotheses are satisfied the proof and conclusions of that result hold.

It asserts that $\bd\doteq (c_1,\hdots,c_{m-1},d_1,\hdots,d_{n-1})$ is in $\calAC_{m+n-2}$.  Note that since $\cale=\{e_v\}$ has only one element, each of $\bd_0 = P_{v_T}(\bd)$ and $\bc_0 = P_v(\bd)$ has all of its entries but $d_{e_v} = d_0 = c_0$ from $\bd_{\calf}$, so $\bd_{\calf} = \bd$ and $k = m+n-2$.  The conclusion of Lemma 3.25 of \cite{DeB_Voronoi} thus asserts in our terms that $\bd_{\calf}$ is in $\calAC_k$, and in $\calc_k\cup\calBC_k$ if and only if $P_{v_T}(\bd)\in\calc_{n_T}\cup\calBC_{n_T}$; and that $D_0(\bd_{\calf})=D_0(P_v(\bd))+D_0(P_{v_T}(\bd))$.

For the one-edged case of our result we must strengthen this conclusion with four additional assertions (assuming the bulleted hypothesis above).  We claim first that $\bd_{\calf} = \bd$ is the side length collection of a cyclic polygon $P$ tiled by polygons $P_v$ and $P_{v_T}$ with respective side length collections $P_v(\bd) = \bc_0$ and $P_{v_T}(\bd) = \bd_0$.  This is in fact recorded in the proof of \cite[Lemma 3.25]{DeB_Voronoi}, where $P_{v}$ is called $P_0$ and $P_{v_T}$ is $Q_0$.  It implies our second additional assertion, that $J(\bd_{\calf}) = J(P_v(\bd)) = J(P_{v_T}(\bd))$, since $P$ shares a circumcircle with $P_v$ and $P_{v_T}$.

We also need that if $\bd_{\calf}\notin\calc_{k}$ then its unique largest entry comes from among the $d_{f_i}$ with $v_T\in f_i$, i.e.~from among the $d_i\neq d_0$, in the language of \cite[L.~3.25]{DeB_Voronoi}.  The proof there shows that $P_0\cap Q_0 = \gamma_0$ is a side of each with length $c_0 = d_0 = d_{e_v}$, and that $P_0$ and the circumcircle center $v$ lie in opposite half-spaces bounded by the geodesic containing $\gamma_0$.  Proposition 2.2 of \cite{DeB_cyclic_geom} then implies that $\gamma_0$ is unique with this property among sides of $P_0$.  If $\bd_{\calf}\notin\calc_k$, i.e.~$\bd\notin\calc_{m+n-2}$, then the unique longest side $\gamma_{i_0}$ of $P$ is characterized by the fact that $v$ and $P$ lie in opposite half-spaces bounded by the geodesic containing $\gamma_{i_0}$, again by \cite[Prop.~2.2]{DeB_cyclic_geom}.  This implies $\gamma_{i_0}$ is a side of $Q_0$ other than $\gamma_0$, since these comprise the remaining sides of $P$, and this assertion follows.

We finally require that $\bd_{\calf}$ is in $\calc_k$ or $\calBC_{k}$ if and only if $P_{v_T}(\bd) = \bd_0$ is in $\calc_n$ or $\calBC_n$ respectively.  It is asserted in \cite[L.~3.25]{DeB_Voronoi} that $\bd\in\calc_{m+n-2}\cup\calBC_{m+n-2}$ if and only if $\bd_0\in\calc_n\cup\calBC_n$.  We have the additional fact, again by \cite[Prop.~2.2]{DeB_cyclic_geom}, that $\bd\in\calBC_{m+n-2}$ if and only if $v$ lies in a side of $P$.  If this is so then by the above $v$ lies in a side of $Q_0$, so $\bd_0\in\calBC_n$.  On the other hand, our strengthened hypothesis does not allow $v\in\gamma_0$, so if $\bd_0\in\calBC_n$ then $v$ lies in a side of $Q_0$ that is a side of $P$.  Hence $\bd\in\calBC_{m+n-2}$, and the second assertion is proved.  This gives the one-edge case of the current result.

We now proceed to the inductive step.  Let $T$ be a compact rooted tree with at least two edges, and let $v_0$ be a vertex  farthest from $v_T$ in $T$.  Then $v_0$ is contained in a single edge $e_{v_0}$ of $T$, and we take $T_0 = \overline{T-e_{v_0}}$.  Listing the edges containing $v_0$ as $e_{v_0},f_{i_1},\hdots,f_{i_{n_{v_0}-1}}$, where all $f_{i_j}\in\calf$, the frontier of $T_0$ in $V$ is $\calf_0 = (\calf-\{f_{i_j}\})\cup\{e_{v_0}\}$ and the edge set is $\cale-\{e_{v_0}\}$.  Given $\bd = (\bd_{\cale},\bd_{\calf})$ we obtain $\bd_0 = (\bd_{\cale_0},\bd_{\calf_0})$ by omitting the entries $d_{f_j}$ of $\bd_{\calf}$ and shuffling $d_{e_{v_0}}$ from $\bd_{\cale}$ to $\bd_{\calf_0}$.  Then $P_v(\bd_0) = P_v(\bd)$ for all $v\in T_0^{(0)}$.

In particular, if $T$ and $\bd$ satisfy (1)--(3) then so do $T_0$ and $\bd_0$.   We suppose this is so, and assume by induction that the desired conclusion holds for $T_0$ and $\bd_0$.  We now note that the hypotheses of our strengthened \cite[Lemma 3.25]{DeB_Voronoi} (from the one-edged case) are satisfied with $\bc_0$ the $n_{v_0}$-tuple of dual lengths to the edges containing $v_0$ and $\bd_0 = \bd_{\calf_0}$, ordered so that $c_0 = d_0 = d_{e_{v_0}}$.  Applying that result and noting that $\bd$ as described there is $\bd_{\calf}$, we thus conclude that $\bd_{\calf}\in\calAC_n$; it is in $\calc_n$ or $\calBC_n$ if and only if $P_{v_T}(\bd)$ is in $\calc_{n_T}$ or $\calBC_{n_T}$, respectively; that $J(\bd_{\calf} = J(P_v(\bd))$ for all $v\in T^{(0)}$; and that $D_0(\bd_{\calf}) = \sum_{v\in T^{(0)}} D_0(P_v(\bd))$.

Moreover by induction the cyclic $n_0$-gon $Q_0$ of \cite[L.~3.25]{DeB_Voronoi}, with side length collection $\bd_{\calf_0}$, is itself tiled by copies of the $P_v$ for all $v\in T_0^{(0)}$.  And since $P_0$ from \cite[L.~3.25]{DeB_Voronoi} shares the side length collection of $P_{v_0}$ it is isometric to it; hence a cyclic $n$-gon with side length collection $\bd_{\calf}$ is tiled by copies of the $P_v$ as desired.\end{proof}

\begin{lemma}\label{deform many polys}  With the hypotheses of Lemma \ref{many polys}, for any continuous map $t\mapsto \bd_{\calf}(t)\in(\mathbb{R}^+)^{\calf}$ with $\bd_{\calf}(0) = \bd_{\calf}$ there is a continuous map $t\mapsto\bd_{\cale}(t)\in(\mathbb{R}^+)^{\cale}$ on $[0,\epsilon)$ for some $\epsilon>0$, with $\bd_{\cale}(0) = \bd_{\cale}$, such that $\bd(t) = (\bd_{\calf}(t),\bd_{\cale}(t))$ has properties (1) - (3) of the lemma.\end{lemma}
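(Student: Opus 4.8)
The plan is to build $\bd_\cale(t)$ by re-cutting a deformed cyclic polygon along the combinatorial template that Lemma \ref{many polys} already supplies. By that lemma $\bd_\calf\in\calAC_k$, and a cyclic $k$-gon $P$ with side length collection $\bd_\calf$ is tiled by cyclic polygons $\{P_v\}_{v\in T^{(0)}}$, each with side length collection $P_v(\bd)$ and each inscribed in the common circumcircle $\mathcal C$ of $P$ (of radius $J(\bd_\calf)$). Enumerate the vertices of $P$ cyclically as $w_0,\hdots,w_{k-1}$ so that the $f_i$-side joins $w_i$ to $w_{i+1}$. The tiling then records the following fixed combinatorial data: for each $e\in\cale$ the unordered pair $\{a(e),b(e)\}$ with the internal edge $e$ joining $w_{a(e)}$ to $w_{b(e)}$ (equivalently, $e$ spans exactly the frontier edges lying in the component of $T-e$ missing $v_T$), and for each $v\in T^{(0)}$ the cyclic sublist of $\{w_j\}$ whose convex hull is $P_v$. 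This template is what I would transport to nearby side-length data.

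Since $\calAC_k$ is open and $t\mapsto\bd_\calf(t)$ is continuous with value $\bd_\calf$ at $0$, there is $\epsilon>0$ with $\bd_\calf(t)\in\calAC_k$ for $t\in[0,\epsilon)$. For each such $t$ let $P(t)$ be the cyclic $k$-gon with side length collection $\bd_\calf(t)$, inscribed in the circle of radius $J(\bd_\calf(t))$, with vertices $w_0(t),\hdots,w_{k-1}(t)$ in cyclic order and $f_i$-side joining $w_i(t)$ to $w_{i+1}(t)$. Normalizing, say, the position of $w_0(t)$ and the direction of the $f_0$-side to be constant, $t\mapsto P(t)$ is continuous; this is the usual continuous dependence of a cyclic polygon on its side lengths, resting on continuity of $J$ and of the central angle each $f_i$ subtends, which is an elementary continuous function of $d_{f_i}(t)$ and $J(\bd_\calf(t))$ (cf.~\cite{DeB_cyclic_geom}). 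I would then define $\bd_\cale(t)\in(\mathbb R^+)^\cale$ by $d_e(t)=d_{\HH^2}(w_{a(e)}(t),w_{b(e)}(t))$ and set $\bd(t)=(\bd_\cale(t),\bd_\calf(t))$. Continuity of $t\mapsto\bd_\cale(t)$ is then immediate, and $\bd_\cale(0)=\bd_\cale$ because at $t=0$ the chords $w_{a(e)}w_{b(e)}$ are exactly the internal edges of the original tiling.

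Next I would argue that the template stays valid near $0$. Whether two of the chords $w_{a(e)}(t)w_{b(e)}(t)$ cross depends only on the cyclic order of their four endpoint-indices on the circle, and that order is the same for all $t$; since the chords are non-crossing at $t=0$, they are non-crossing throughout $[0,\epsilon)$. Shrinking $\epsilon$ so that the $w_j(t)$ remain pairwise distinct (hence the $d_e(t)$ remain positive), this non-crossing family cuts $P(t)$ into convex cyclic polygons $P_v(t)$ with exactly the combinatorics of the $t=0$ tiling, each inscribed in the circumcircle of $P(t)$ and having side length collection $P_v(\bd(t))$. In particular $P_v(\bd(t))\in\calAC_{n_v}$ and $J(P_v(\bd(t)))=J(\bd_\calf(t))$ for every $v\in T^{(0)}$, which yields property (3).

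Finally, properties (1) and (2) for $t$ near $0$ follow because each condition at stake is open or is a finite system of strict inequalities. For (1), fix $v\neq v_T$: the set $\calAC_{n_v}-(\calc_{n_v}\cup\calBC_{n_v})$ is open, so $P_v(\bd(t))$ stays in it; and since $P_v(\bd)$ is non-centered, $d_{e_v}$ is its \emph{unique} largest entry (by \cite[Prop.~2.2]{DeB_cyclic_geom}), a condition defined by finitely many strict inequalities among the entries of $P_v(\bd(t))$, hence preserved. For (2), $P_{v_T}(\bd(t))\in\calAC_{n_T}$ by the previous paragraph; if $P_{v_T}(\bd)\in\calc_{n_T}$ then openness of $\calc_{n_T}$ keeps it there and the conditional is vacuous, while if not, Lemma \ref{many polys} gives a unique $f_i\in\calf$ with $v_T\in f_i$ and $d_{f_i}>d_e$ for all other $e\in\cale\cup\calf$, so for small $t$ the entry $d_{f_i}(t)$ is still strictly the largest in $P_{v_T}(\bd(t))$ and is the only candidate for a maximal entry --- and it belongs to $\calf$. (The base case $T=\{v_T\}$ is trivial: $\cale=\emptyset$, $\bd(t)=\bd_\calf(t)$, and (1), (3), and the conditional in (2) hold vacuously.) The only genuinely delicate steps I anticipate are establishing the continuity of $t\mapsto P(t)$ with its marked vertices, and recording that the combinatorial cutting pattern is literally constant in $t$; once those are in hand, the rest is routine bookkeeping with open conditions.
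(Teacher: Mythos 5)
Your proposal is correct and follows essentially the same route as the paper: the paper defines $d_e(t)=\ell_{ij}(\bd_{\calf}(t))$ using the diagonal-length functions of \cite[Corollary 1.15]{DeB_cyclic_geom}, which is exactly your $d_{\mathbb{H}^2}(w_{a(e)}(t),w_{b(e)}(t))$, and then invokes openness for conditions (1) and (2) just as you do. The continuity of the marked cyclic polygon and the constancy of the cutting pattern, which you flag as delicate, are precisely what the cited $\ell_{ij}$ machinery packages up, so no gap remains.
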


\begin{proof}  We use the assertion in Lemma \ref{many polys} that for $\bd = (\bd_{\calf},\bd_{\cale})$ satisfying its hypotheses, a cyclic $n$-gon $P$ with side length collection $\bd_{\calf}$ is tiled by copies of $P_v$ for $v\in T^{(0)}$.  Thus for each $e\in\cale$ and each vertex $v$ of $e$, the corresponding edge of $P_v(\bd)$ is a diagonal of $P$.  So its length $d_e$ is given by $\ell_{ij}(\bd_{\calf})$ for some fixed $i$ and $j$ between $0$ and $n-1$, where $\ell_{ij}$ is the diagonal-length function of \cite[Corollary 1.15]{DeB_cyclic_geom}.

Now any continuous deformation $\bd_{\calf}(t)$ of $\bd_{\calf}$ remains in $\calAC_n$ for small $t$, since $\calAC_n$ is open in $(\mathbb{R}^+)^n$.  We define $\bd_{\cale}(t)$ by taking $d_e(t) = \ell_{ij}(\bd_{\calf}(t))$ for each $e\in\cale$, where $\ell_{ij}$ is the diagonal-length function described above.  Then by the definition of the $\ell_{i,j}$, $P_v(\bd(t))\in\calAC_{n_v}$ and $J(P_v(\bd(t))) = J(P_{v_T}(\bd(t))) = J(\bd_{\calf}(t))$ for all $t$ such that $\bd_{\calf}(t)\in\calAC_n$.  Criteria (1) and (2) of the lemma involve only open conditions and so hold for $t$ small enough.\end{proof}

\begin{proposition}\label{equal radii}  For a compact rooted tree $T\subset V$ with root vertex $v_T$, edge set $\cale$ and frontier $\calf = \{f_0,\hdots,f_{n-1}\}$, and $\bd_{\calf}=(d_{f_0},\hdots,d_{f_{n-1}})\in(\mathbb{R}^+)^{\calf}$, suppose a local minimum of $\bd_{\cale}\mapsto D_T(\bd_{\cale},\bd_{\calf})$ on $\overline{\mathit{Ad}}(\bd_{\calf})$ occurs at $\bd_{\cale}$ with the following property: for $\bd = (\bd_{\cale},\bd_{\calf})$ there exists $v_0\in T^{(0)}$ such that $J(P_{v}(\bd))=J(P_{v_0}(\bd))$ for the terminal vertex $v$ of $e_{v_0}$.  Then for the maximal subtree $T_0$ of $T$ containing $v_0$ such that $J(P_v(\bd)) = J(P_{v_0}(\bd))$ for all $v\in T_0^{(0)}$:\begin{itemize}
	\item $v_T\in T_0$;
	\item $P_{v_T}(\bd)\in\calBC_{n_T}$, where $v_T$ has valence $n_T$ in $V$; and
	\item for $e_T\in\cale\cup\calf$ containing $v_T$ such that $d_{e_T}$ is maximal among all such edges, either $e_T\in\calf$ or $e_T$ is an edge of $T_0$ and its other endpoint $v_T'$ is on the boundary of $T_0$; ie.~it is of valence one in $T_0$.\end{itemize}
Moreover, if $e_T\in\cale$ then for each $v\in v_T'-1$, $P_v(\bd)\in\calBC_{n_v}$.\end{proposition}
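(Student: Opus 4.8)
The plan is to argue by contradiction: assuming some $P_{w_0}(\bd)$ with $w_0\in v_T'-1$ fails to lie in $\calBC_{n_{w_0}}$, I will produce a continuous path $u\mapsto\bd(u)\in\overline{\mathit{Ad}}(\bd_{\calf})$ with $\bd(0)=\bd$ along which $D_T$ strictly decreases, contradicting the hypothesis that $\bd_{\cale}$ is a local minimum of $D_T(\cdot,\bd_{\calf})$. First I would record the rigidity supplied by the three bullets. Since $e_T\in\cale$ it is a common side of $P_{v_T}(\bd)$ and $P_{v_T'}(\bd)$. Write $J=J(P_{v_0}(\bd))$ for the common circumradius along $T_0$. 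By the second and third bullets $P_{v_T}(\bd)\in\calBC_{n_T}$ with $e_T$ its unique longest side, hence its diameter side, so $d_{e_T}=2J$; and since $v_T'\in T_0$, $P_{v_T'}(\bd)$ has circumradius $J$ and a side of length $2J$, so by \cite[Prop.~2.2]{DeB_cyclic_geom} it is semicyclic with diameter side $e_T$. For each $w\in v_T'-1$ the edge $e_w$ is not in $T_0$ (the only edge of $T_0$ at $v_T'$ is $e_T$), so $w\notin T_0$; maximality of $T_0$ together with Definition \ref{admissible closure}(3) then force $J(P_w(\bd))<J$ strictly. Finally, Definition \ref{admissible closure}(1) and \cite[Prop.~2.2]{DeB_cyclic_geom} place $P_{w_0}(\bd)$ in $\calAC_{n_{w_0}}-(\calc_{n_{w_0}}\cup\calBC_{n_{w_0}})$ with $d_{e_{w_0}}$ its unique largest entry.

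To build the path, let $T_1\supseteq\{w_0\}$ be the maximal subtree on which $J(P_\bullet(\bd))$ is constant, necessarily equal to $J_1:=J(P_{w_0}(\bd))<J$ and rooted at $w_0$ (its parent $v_T'$ has larger radius); and let $\widehat T_0$ be the component of $T_0-e_T$ containing $v_T$, rooted at $v_T$. The key point is that $\bd$ restricted to each of $\widehat T_0$ and $T_1$ satisfies the hypotheses of Lemma \ref{many polys}: conditions (1) and (3) are inherited from Definition \ref{admissible closure}, once one notes that no interior $P_v$ can be semicyclic (two sides of a single cyclic polygon cannot both be diameters); condition (2) holds for $\widehat T_0$ because $P_{v_T}(\bd)\in\calBC$ has its longest side $e_T$ in the frontier of $\widehat T_0$, and for $T_1$ because $P_{w_0}(\bd)$ is non-centered with its longest side $e_{w_0}$ in the frontier of $T_1$. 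Now define the path: decrease $d_{e_{w_0}}$ linearly in $u$; let $d_{e_T}(u):=2J(P_{v_T'}(\bd(u)))$, i.e.\ choose $d_{e_T}(u)$ so that $P_{v_T'}$ stays semicyclic with diameter side $e_T$ (forcing $d_{e_T}(u)$ to decrease); apply Lemma \ref{deform many polys} to $\widehat T_0$ with $d_{e_T}$ deforming as just specified, and to $T_1$ with $d_{e_{w_0}}$ deforming, to obtain the interior coordinates on $\cale(\widehat T_0)$ and $\cale(T_1)$; hold all remaining coordinates of $\bd$ fixed. The edge sets $\cale(\widehat T_0)$, $\cale(T_1)$, $\{e_T\}$, $\{e_{w_0}\}$ are pairwise disjoint, so this is well-defined, and only the tiles $P_v$ with $v$ in $\widehat T_0$, in $T_1$, or equal to $v_T'$ change.

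The real work — and the step I expect to be the main obstacle — is checking that $\bd(u)\in\overline{\mathit{Ad}}(\bd_{\calf})$ for small $u>0$; the delicate point is precisely that the deformation must simultaneously keep $P_{v_T'}$ out of $\calc_{n_{v_T'}}$ (hence the coupling $d_{e_T}(u)=2J(P_{v_T'}(\bd(u)))$) and drag along the entire equal-radius subtree $T_1$ below $w_0$ (via Lemma \ref{deform many polys}), so that no equality in Definition \ref{admissible closure}(3) is broken. Concretely: for $v$ in $\widehat T_0$ or $T_1$, conditions (1)--(3) follow from Lemmas \ref{many polys} and \ref{deform many polys}, using that the cyclic polygon $\widehat P(u)$ with side collection $\bd_{\widehat\calf_0}(u)$ (resp.\ $P_1(u)$ with $\bd_{\calf_1}(u)$) is tiled by the $P_v(\bd(u))$, which therefore share its circumradius; shrinking the diameter side of the semicyclic $\widehat P(0)$ makes $\widehat P(u)$ strictly centered, with $J(\widehat P(u))>d_{e_T}(u)/2$, while shrinking the longest side of the non-centered $P_1(0)$ keeps $P_1(u)$ non-centered with $e_{w_0}$ still its longest side and $J(P_1(u))<J_1$; and $P_{v_T'}(\bd(u))$ stays semicyclic with $e_T$ still longest by construction. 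The cross-piece inequalities in condition (3) — $J(P_{v_T})\ge J(P_{v_T'})$, $J(P_{v_T'})\ge J(P_w)$ for $w\in v_T'-1$, and $J$ at a leaf of $\widehat T_0$ or $T_1$ versus a child outside — are either equalities within a piece or hold with room to spare, since the relevant strict inequalities ($J>J_1$; $J>J(P_\mu)$ for $\mu\notin T_0$; $J_1>J(P_\mu)$ for $\mu\notin T_1$, all from maximality of $T_0$ or $T_1$) persist under small perturbation; untouched vertices keep their conditions verbatim.

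It then remains to differentiate $D_T(\bd(u))=\sum_v D_0(P_v(\bd(u)))$ at $u=0^+$. Using the standard fact (a direct computation from the area formula of \cite[Cor.~1.17]{DeB_cyclic_geom}) that $\partial D_0(P)/\partial\ell_e$ equals the signed distance $h_P^e$ from the circumcircle center of $P$ to the line through the side $e$, positive when the center lies on the polygon's side, one gets: by equation \eqref{radius equal sum} the $\widehat T_0$-summand equals $D_0(\widehat P(u))$, which contributes only at order $u^2$ since $h_{\widehat P(0)}^{e_T}=0$ for the semicyclic $\widehat P(0)$; the $v_T'$-summand has $u$-derivative $-h_{P_{v_T'}}^{e_{w_0}}$ (the $e_T$-term drops because $P_{v_T'}$ is semicyclic with diameter $e_T$); and the $T_1$-summand equals $D_0(P_1(u))$, with $u$-derivative $-h_{P_1}^{e_{w_0}}=-h_{P_{w_0}}^{e_{w_0}}$ (only the side $e_{w_0}$ of $P_1$ moves, and it is also a side of $P_{w_0}$ on the same circle). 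Hence $\frac{d}{du}D_T(\bd(u))\big|_{u=0^+}=-\bigl(h_{P_{v_T'}}^{e_{w_0}}+h_{P_{w_0}}^{e_{w_0}}\bigr)$. Now $P_{v_T'}$ and $P_{w_0}$ lie on opposite sides of the line through $e_{w_0}$; as $e_{w_0}$ is a non-diameter side of the semicyclic $P_{v_T'}$ its center lies on the $P_{v_T'}$-side, giving $h_{P_{v_T'}}^{e_{w_0}}=\sqrt{J^2-(d_{e_{w_0}}/2)^2}$, while as $e_{w_0}$ is the longest side of the non-centered $P_{w_0}$ its center lies on the far, i.e.\ $P_{v_T'}$, side, giving $h_{P_{w_0}}^{e_{w_0}}=-\sqrt{J_1^2-(d_{e_{w_0}}/2)^2}$; since $J>J_1$ their sum is positive, so $D_T(\bd(u))<D_T(\bd)$ for small $u>0$. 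This contradicts local minimality, so every $P_v(\bd)$ with $v\in v_T'-1$ lies in $\calBC_{n_v}$.
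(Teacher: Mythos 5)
Your proposal establishes only the final ``Moreover'' assertion of the Proposition. The three bulleted conclusions --- that $v_T\in T_0$, that $P_{v_T}(\bd)\in\calBC_{n_T}$, and the dichotomy for $e_T$ --- are treated as given facts to be ``recorded'' rather than proved, yet they are the bulk of the statement and none of them is automatic. In the paper each requires its own deformation: Lemma \ref{halfway there} handles the first two by supposing $v_T\notin T_0$ or $P_{v_T}(\bd)\in\calc_{n_T}$ and shrinking the outermost edge of $T_0$ to strictly decrease $D_T$; the third bullet needs a further deformation in which $d_{e_T}$ is coupled (via $b_0$) to a shrinking edge on the far side of $v_T'$ so as to push $P_{v_T}$ into $\calc_{n_T}$. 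Moreover your argument quietly relies on consequences of those bullets and of Lemma \ref{not in BC} --- e.g.\ that no non-root vertex of $T_0$ other than possibly $v_T'$ is semicyclic, and that $J(P_w(\bd))<J$ strictly for every $w\in v_T'-1$ --- so the logical order matters: the bullets must be in hand before your contradiction can run. As written, the proof has a genuine gap covering roughly three quarters of the Proposition.

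For the clause you do treat, your construction is essentially the paper's: decrease $d_{e_{w_0}}$, define $d_{e_T}(u)$ so that $P_{v_T'}$ stays semicyclic with diameter side $e_T$, propagate through the equal-radius pieces with Lemma \ref{deform many polys}, and differentiate $D_T$ using that $\partial D_0/\partial d_e$ vanishes on the diameter side of a semicyclic polygon and is increasing in the circumradius otherwise; the sign analysis and the conclusion are correct. Two smaller points. First, the equal-radius subtree $T_1$ below $w_0$ is necessarily trivial: if some child of $w_0$ had radius $J_1$, the already-proved bullets applied to that equal-radius subtree would force $v_T$ into it, contradicting $J_1<J=J(P_{v_T}(\bd))$. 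The paper records this and holds everything below $w_0$ fixed, so your extra bookkeeping for $T_1$ is harmless but unnecessary. Second, your formula $h=\sqrt{J^2-(d/2)^2}$ for the center-to-side distance is the Euclidean one; the correct expression from \cite[Prop.~2.3]{DeB_cyclic_geom} is $\sqrt{1/\cosh^2(d/2)-1/\cosh^2 J}$, but since only its sign and its monotonicity in $J$ are used, this does not affect the argument.
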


Recall that Definition \ref{admissible closure}(1) asserts for each $v\in T^{(0)}-\{v_T\}$ that $P_v(\bd)$ is in $\calAC_{n_v}-\calc_{n_v}$. We begin by noting a stronger fact for vertices of $T_0$.

\begin{lemma}\label{not in BC}  With hypotheses and notation as in Proposition \ref{equal radii}, let $v_1$ be the nearest vertex of $T_0$ to $v_T$ in $T$ (so $v_T\in T_0 \Leftrightarrow v_T=v_1$). If $v_1\neq v_T$ then for each $v\in T_0^{(0)}-\{v_1\}$, $P_v(\bd)\in\calAC_{n_v}-(\calc_{n_v}\cup\calBC_{n_v})$.  If $v_1=v_T$ then this still holds for all but at most one $v\in T_0^{(0)}-\{v_1\}$.  If in this case there does exist $v\in T_0^{(0)} - \{v_1\}$ with $P_v(\bd)\in\calBC_{n_v}$, then $e_v$ joins $v$ to $v_T$, $P_{v_T}(\bd)\in\calBC_{n_T}$, and both $P_{v_T}(\bd)$ and $P_v(\bd)$ have maximal entry $d_{e_v}$.\end{lemma}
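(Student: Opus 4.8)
The plan is to argue by contradiction: suppose some $v\in T_0^{(0)}-\{v_1\}$ has $P_v(\bd)\in\calBC_{n_v}$, and deduce from this both that $v_1=v_T$ and that the structure in the last sentence of the lemma holds.  The only facts about $\bd$ I would invoke, beyond the definition of $T_0$, are that $\bd\in\overline{\mathit{Ad}}(\bd_{\calf})$ --- so Definition \ref{admissible closure} applies at every vertex of $T^{(0)}$ --- and that $x\mapsto J(P_x(\bd))$ is constant on $T_0^{(0)}$, with value $J$, say.  (The local-minimality hypothesis of Proposition \ref{equal radii} appears not to be needed here.)

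I would begin with a combinatorial observation about rooted trees: since $T_0$ is a connected subtree of $T$ and $v_1$ is its vertex nearest $v_T$, the vertex $v_1$ lies on the arc of $T$ from $x$ to $v_T$ for every $x\in T_0^{(0)}$; hence for $v\in T_0^{(0)}-\{v_1\}$ both $e_v$ and its far endpoint $w$ --- the parent of $v$ in $T$ --- lie in $T_0$.  In particular $v\neq v_T$ (otherwise $v=v_1$), so Definition \ref{admissible closure}(1) gives $P_v(\bd)\in\calAC_{n_v}-\calc_{n_v}$ with largest entry $d_{e_v}$, while $J(P_v(\bd))=J(P_w(\bd))=J$ since $v,w\in T_0^{(0)}$.

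The geometric heart is then the following.  Inscribe representatives of $P_v(\bd)$ and $P_w(\bd)$ in a common circle $\calk$ of radius $J$, on opposite sides of a shared chord $\gamma$ of length $d_{e_v}$ realizing the $e_v$-side of each (possible since both tuples lie in $\calAC$ with circumradius $J$, and $d_{e_v}\leq 2J$).  Since $P_v(\bd)\in\calBC_{n_v}$, some side of the $P_v$-polygon contains the center $O$ of $\calk$, hence is a diameter of length $2J$; as $d_{e_v}$ is the largest entry of $P_v(\bd)$, and $d_{e_v}\leq 2J$, this forces $d_{e_v}=2J$, so $\gamma$ is itself a diameter of $\calk$ and contains $O$ in its interior.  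As $\gamma$ is also a side of the $P_w$-polygon, it follows that $P_w(\bd)\in\calBC_{n_w}$, with (by \cite[Prop.~2.2]{DeB_cyclic_geom}) $\gamma$ its unique longest side, i.e.~$d_{e_v}$ is the unique largest entry of $P_w(\bd)$, attained at the $e_v$-coordinate.  Now if $w\neq v_T$ then Definition \ref{admissible closure}(1) at $w$ makes $d_{e_w}$ a largest entry of $P_w(\bd)$, so uniqueness forces $e_w=e_v$ as edges at $w$ --- impossible, as $e_v$ joins $w$ to $v$ whereas $e_w$ joins $w$ to its parent, which differs from $v$ since $w\neq v_T$.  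Hence $w=v_T$, and so $v_T\in T_0^{(0)}$ and $v_1=v_T$.  This proves the first assertion contrapositively; and when $v_1=v_T$ it shows any semicyclic $v\in T_0^{(0)}-\{v_T\}$ is joined to $v_T$ by $e_v$, with $P_{v_T}(\bd)=P_w(\bd)\in\calBC_{n_T}$ and $d_{e_v}$ the largest entry of both $P_{v_T}(\bd)$ and $P_v(\bd)$.  Finally, if $v,v'$ were two such vertices then $e_v$ and $e_{v'}$ would be two distinct sides of the $P_{v_T}$-polygon each equal to its unique longest side, which is absurd; so there is at most one, completing the proof.

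I expect the main obstacle to be the middle paragraph --- in particular, justifying carefully that $P_v(\bd)$ and $P_w(\bd)$ can be placed in a common circumcircle across a shared chord and then pinning down the location of $O$ on that chord.  This is the two-polygon gluing that underlies \cite[L.~3.25]{DeB_Voronoi}, but that lemma cannot be applied off the shelf here, because its hypotheses ask that the shared edge \emph{not} be maximal in the parent polygon --- essentially the negation of what we want to conclude --- so the relevant piece has to be recovered from \cite[Prop.~2.2]{DeB_cyclic_geom} directly.  I would also take care with two conventions: that ``largest entry $d_{e_v}$'' in Definition \ref{admissible closure}(1) should not be read as asserting a \emph{unique} largest entry (uniqueness is produced here only via semicyclicity), and that $e_v$ is the \emph{initial} edge of the arc from $v$ to $v_T$, so that its far endpoint $w$ genuinely is the parent of $v$.
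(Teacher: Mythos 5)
Your proposal is correct and follows essentially the same route as the paper's proof: from $P_v(\bd)\in\calBC_{n_v}$ with largest entry $d_{e_v}$ one gets $J(P_v(\bd))=d_{e_v}/2$, the equal-radii condition on $T_0$ transfers this to the parent $w$, forcing $P_w(\bd)\in\calBC_{n_w}$ with unique largest entry $d_{e_v}$, which contradicts Definition \ref{admissible closure}(1) unless $w=v_T$. The paper simply cites \cite[Prop.~1.11]{DeB_cyclic_geom} for the semicyclic radius identity where you re-derive it by the common-circumcircle picture, and it leaves the ``at most one'' count implicit where you spell it out; these are cosmetic differences only.
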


\begin{proof}For $v$ as above we just need to show that $P_v(\bd)\notin\calBC_{n_v}$.  Note that for such $v$, $P_v(\bd)$ has $d_{e_v}$ as its largest entry, by Definition \ref{admissible closure}(1), and by Proposition 2.2 of \cite{DeB_cyclic_geom}, $d_{e_v}$ is \textit{unique} with this property.  Since $v\neq v_1$, the other endpoint $v'$ of $e_v$ lies in $T_0$, so $J(P_v(\bd)) = J(P_{v'}(\bd))$.  If $P_v(\bd)$ lies in $\calBC_{n_v}$ then by \cite[Prop.~1.11]{DeB_cyclic_geom}, $J(P_v(\bd))=d_{e_v}/2$.  Hence also $J(P_{v'}(\bd)) = d_{e_v}/2$, so by the same result $P_{v'}(\bd)\in\calBC_{n_{v'}}$ has largest entry $d_{e_v}$.

This is a contradiction if $v'\neq v_T$, since then it has unique largest entry $d_{e_{v'}} > d_{e_v}$ by Definition \ref{admissible closure}(1).  If $v' = v_T$ then since $v'\in T_0$ we must have $v_1 = v_T$, and the Lemma follows from the previous paragraph.\end{proof}

For the sake of readability we will prove the first two assertions of the Proposition separately.

\begin{lemma}\label{halfway there}With the hypotheses and notation of Proposition \ref{equal radii}, $v_T\in T_0$ and $P_{v_T}\in\calBC_{n_T}$.\end{lemma}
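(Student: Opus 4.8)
The plan is to argue by contradiction. Write $v_1$ for the nearest vertex of $T_0$ to $v_T$, as in Lemma~\ref{not in BC}. If the present lemma fails, then either $v_1 \neq v_T$ (equivalently $v_T \notin T_0$), or $v_1 = v_T$ while $P_{v_T}(\bd) \notin \calBC_{n_T}$, so that $P_{v_T}(\bd) \in \calc_{n_T}$ by Definition~\ref{admissible closure}(2). In either case I would produce a deformation of $\bd_{\cale}$, within $\overline{\mathit{Ad}}(\bd_{\calf})$, along which $D_T$ strictly decreases, contradicting the local minimality of $\bd_{\cale}$.

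First I would record that $T_0$, equipped with the restriction of $\bd$ to its edge set $\cale_{0}$ and frontier $\calf_{0}$, meets the hypotheses of Lemma~\ref{many polys}: hypothesis (3) is the defining property of $T_0$; hypothesis (1) follows from Lemma~\ref{not in BC} and Definition~\ref{admissible closure}(1), with the understanding that in the case $v_1 = v_T$, should the lone exceptional vertex permitted by Lemma~\ref{not in BC} occur, we would already have $P_{v_T}(\bd) \in \calBC_{n_T}$ and be done; and hypothesis (2) holds because $P_{v_1}(\bd) \in \calAC_{n_{v_1}}$, with largest entry $d_{e_{v_1}}$, an edge of $\calf_0$, when $v_1 \neq v_T$ (Definition~\ref{admissible closure}(1)), while the relevant clause is vacuous when $v_1 = v_T$ and $P_{v_T}(\bd) \in \calc_{n_T}$. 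Lemma~\ref{many polys} then yields a cyclic $k_0$-gon $P_0$ with side-length collection $\bd_{\calf_0}$, tiled by the cyclic polygons $P_v(\bd)$ for $v \in T_0^{(0)}$, all sharing its circumcircle of radius $R := J(\bd_{\calf_0})$; moreover $\bd_{\calf_0} \in \calc_{k_0}$ exactly when $P_{v_1}(\bd) \in \calc_{n_{v_1}}$, and when it is not, the unique longest side of $P_0$ is one incident to $v_1$.

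The deformation is built on a frontier edge $e^{*} \in \calf_0 \cap \cale$ joining a vertex of $T_0$ to its other endpoint $w^{*} \in T^{(0)} - T_0^{(0)}$, chosen with $J(P_{w^*}(\bd)) \neq R$. When $v_T \notin T_0$ I would take $e^{*} = e_{v_1}$ and $w^{*}$ its terminal vertex; then $J(P_{w^*}(\bd)) > R$ by Definition~\ref{admissible closure}(3) and the maximality of $T_0$, and $e^{*}$ is the unique longest side of $P_0$. When $v_1 = v_T$, $P_{v_T}(\bd) \in \calc_{n_T}$, and $T_0 \subsetneq T$, I would take $e^{*}$ to be any frontier edge of $T_0$ running to a vertex $w^{*}$ outside $T_0$; then $\bd_{\calf_0} \in \calc_{k_0}$, $J(P_{w^*}(\bd)) < R$ by Definition~\ref{admissible closure}(3), and $e^{*} = e_{w^*}$ is the unique longest side of $P_{w^*}(\bd)$ (Definition~\ref{admissible closure}(1)). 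In both situations $e^{*}$ is the unique longest side of whichever of $P_0$, $P_{w^*}(\bd)$ has the smaller circumradius, so — since $P_0$ and $P_{w^*}(\bd)$ meet along $e^{*}$ from opposite sides — both circumcenters lie on the side of $e^{*}$ away from that smaller polygon (for the larger-radius polygon this uses, in the sub-case $w^* = v_T$ with $P_{v_T}(\bd) \in \calBC_{n_T}$, the bound $d_{e^{*}} \leq 2R$ to exclude $e^{*}$ being a diameter of its circumcircle), and the circumcenter of the larger-radius polygon is strictly the farther from $e^{*}$, the distance from the center of a circle to a fixed chord being strictly increasing in the radius. Decreasing $d_{e^{*}}$ via Lemma~\ref{deform many polys} on $T_0$ and freezing all other entries of $\bd_{\cale}$ alters only the tiles in $T_0$ — whose areas still sum to $D_0(\bd_{\calf_0})$ — and $P_{w^*}(\bd)$; a direct computation with $D_0$, whose derivative in a side length has the sign of the signed distance from the circumcenter to that side and absolute value strictly increasing in that distance, then shows $\partial D_T / \partial d_{e^{*}} > 0$, so shrinking $d_{e^{*}}$ strictly decreases $D_T$. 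In the degenerate case $v_1 = v_T$, $P_{v_T}(\bd) \in \calc_{n_T}$, $T_0 = T$, no such $e^{*}$ exists and $\bd_{\calf} = \bd_{\calf_0} \in \calc_k$; there I would instead perturb $\bd_{\cale}$ to break the common-circumradius condition along some $e \in \cale$, the resulting centered dual cell being non-convex with area strictly below $D_0(\bd_{\calf})$, so $\bd_{\cale}$ is not even a local minimum. The conclusion $P_{v_T}(\bd) \in \calBC_{n_T}$ then follows by combining the contradiction in the case $v_T \notin T_0$ (forcing $v_T \in T_0$) with that in the case $v_T \in T_0$, $P_{v_T}(\bd) \in \calc_{n_T}$, and Definition~\ref{admissible closure}(2).

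The hard part is checking that the deformation shrinking $d_{e^{*}}$ stays in $\overline{\mathit{Ad}}(\bd_{\calf})$. Every constraint of Definition~\ref{admissible closure} is either open, or survives because $d_{e^{*}}$ is decreasing and is not the largest entry of $P_{w^*}(\bd)$, or is maintained along $T_0$ by Lemma~\ref{deform many polys} — with one exception: when $P_{w^*}(\bd) \in \calBC_{n_{w^*}}$ and $w^* \neq v_T$, shrinking $d_{e^{*}}$ can carry $P_{w^*}(\bd)$ into $\calc_{n_{w^*}}$, violating Definition~\ref{admissible closure}(1). No such difficulty arises when $w^* = v_T$, where $\calc_{n_T} \cup \calBC_{n_T}$ is permitted. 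To get around the obstruction when $w^* \neq v_T$, I would replace the one-edge deformation by one that shrinks $d_{e^{*}}$ while simultaneously shortening the edges along the arc from $w^{*}$ to $v_T$, keeping each intervening polygon semicyclic until the arc first meets the root or a strictly non-centered polygon (where there is room to spare), and then check — using that the circumradii are non-decreasing along this arc — that the first-order change in $D_T$ telescopes to a strictly negative quantity. Making that last step rigorous is the technical crux.
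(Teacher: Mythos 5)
Your overall strategy is the right one and matches the paper's in outline: assume $v_T\notin T_0$, or $v_T\in T_0$ with $P_{v_T}(\bd)\in\calc_{n_T}$; amalgamate the plateau $T_0$ via Lemmas \ref{many polys} and \ref{deform many polys}; and exhibit an admissible deformation along which $D_T$ strictly decreases, using the fact that $\partial D_0/\partial d_i$ has larger magnitude for the polygon with larger circumradius. Your sign computation at the chosen edge is correct. But the choice of \emph{which} edge to deform creates an admissibility failure that you do not address. You shrink a frontier edge $e^*$ of $T_0$, so the deformation changes $J(P_{w^*}(\bd))$ for a vertex $w^*$ \emph{outside} $T_0$. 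Maximality of $T_0$ only guarantees that the inequalities of Definition \ref{admissible closure}(3) are strict across edges with exactly one endpoint in $T_0$; it says nothing about the edges joining $w^*$ to \emph{its} other neighbors, where equality $J(P_u(\bd))=J(P_{w^*}(\bd))$ is entirely possible (a second plateau containing $w^*$). In your first case, decreasing $d_{e^*}=d_{e_{v_1}}$ — a non-largest entry of $P_{w^*}(\bd)\in\calAC_{n_{w^*}}-\calc_{n_{w^*}}$ — strictly \emph{increases} $J(P_{w^*})$ while $J(P_u)$ is frozen, so if $u$ is the parent of $w^*$ and $J(P_u(\bd))=J(P_{w^*}(\bd))$, condition (3) fails for all $t>0$ and $\bd(t)\notin\overline{\mathit{Ad}}(\bd_{\calf})$. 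The symmetric problem (with $J(P_{w^*})$ decreasing past an equal-radius child) occurs in your second case. Iterating to a "higher" plateau does not obviously repair this, and in any event would only show $v_T$ lies in \emph{some} plateau rather than in $T_0$ itself. Separately, your "one exception" is misdiagnosed in the first case: shrinking a non-largest entry of a $\calBC$ polygon pushes it into $\calAC-(\calc\cup\calBC)$, not into $\calc$, so Definition \ref{admissible closure}(1) survives there; the genuine $\calBC\to\calc$ danger arises only in your second case, where $e^*=e_{w^*}$ is the largest entry of $P_{w^*}(\bd)$ — and the repair you sketch for it (which you yourself flag as the unproved "technical crux") propagates in the wrong direction. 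The degenerate case $T_0=T$ is also handled only by an unproved assertion about non-convex configurations.

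The paper avoids all of this by deforming in the opposite place: it takes $v_0$ to be a \emph{farthest} vertex of $T_0$ from $v_T$, sets $T_1=T_0-(\mathit{int}(e_{v_0})\cup v_0)$, and shrinks the \emph{interior} edge $d_{e_0}=d_{e_{v_0}}$ of $T_0$, extending over $\cale_1$ by Lemma \ref{deform many polys}. Then the only circumradii that move are $J(P_{v_0})$ and the common radius $J(\bd_{\calf_1})$ of the rest of the plateau, and every constraint coupling $T_0$ to $T-T_0$ is a strict inequality by maximality of $T_0$, hence preserved for small $t$; the area derivative is the difference of the two square-root terms in (\ref{summa})--(\ref{first time})'s computation and is negative once \ref{smaller J} separates the two radii. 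If you relocate your deformation to $e_{v_0}$ at a farthest leaf of $T_0$ (and, in the case $v_1\neq v_T$, verify claim \ref{not in C} for the amalgam over $T_1$), your argument becomes the paper's.
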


\begin{proof} Suppose that either $v_T$ is not in $T_0$ (ie.~$v_1\neq v_T$), or $v_T\in T_0$ ($v_1=v_T$) and $P_{v_T}(\bd)\in\calc_{n_T}$. So in particular, $P_v(\bd)\notin\calBC_{n_v}$ for all $v\in T_0^{(0)}-\{v_1\}$ by Lemma \ref{not in BC}.  Without loss of generality assume $v_0$ is a farthest vertex of $T_0$ from $v_T$, and refer by $e_0$ to $e_{v_0}$.  Let $T_1 = T_0-(\mathit{int}(e_0)\cup v_0)$, and take $v_1$ as its root vertex.  Let $\calf_1$ be the frontier of $T_1$ in $V$ and name its edge set $\cale_1$, and let $\bd_1 = (\bd_{\cale_1},\bd_{\calf_1})$ take its entries from $\bd$.  Define a deformation $\bd_{\calf_1}(t)$ of $\bd_{\calf_1}$ by taking $d_e(t)\equiv d_e$ for all $e\in\calf_1 - \{e_0\}$ and $d_{e_0}(t) = d_{e_0}-t$.  We will show that this determines a deformation $\bd(t)\in\overline{\mathit{Ad}}(\bd_{\calf})$ such that $D_T(\bd(t))$ is decreasing.

We first note that $\bd_1$ satisfies criteria (1)--(3) of Lemma \ref{many polys}: property (1) is Lemma \ref{not in BC}, and (3) is inherited from $T_0$.  If $v_1\neq v_T$ then criterion (2) follows from the facts that $P_{v_1}(\bd_1) = P_{v_1}(\bd)\in\calAC_{n_{v_1}}-\calc_{n_{v_1}}$ has maximal entry $d_{e_{v_1}}$, and $e_{v_1}\in\calf_1$ since by construction $v_1$ is nearest $v_T$ in $T_1$.  If $v_1 =v_T$ then by hypothesis $P_{v_T}(\bd)\in\calc_{n_T}$, and criterion (2) is immediate.

Lemma \ref{many polys} now implies that if $v_1\neq v_T$ and hence $P_{v_1}(\bd_1)\in\calAC_{n_{v_1}}-\calc_{n_{v_1}}$, then $\bd_{\calf_1}\in\calAC_{n_1}-\calc_{n_1}$, where $n_1 = |\calf_1|$, and it has maximal entry $d_{e_{v_1}}$. Otherwise $\bd_{\calf_1}\in\calc_{n_1}$.  We claim:

\begin{subpara}\label{not in C} If $\bd_{\calf_1}\in\calAC_{n_1}-\calc_{n_1}$ then $\bd_{\calf_1}(t)$ remains in $\calAC_{n_1}-\calc_{n_1}$ for small $t>0$; otherwise $\bd_{\calf_1}(t)\in\calc_{n_1}$ for small $t>0$.\end{subpara}

\begin{proof}[Proof of \ref{not in C}]This holds if $\bd_{\calf_1}\in\calAC_{n_1} - (\calc_{n_1}\cup\calBC_{n_1})$, or if $\bd_{\calf_1}\in\calc_{n_1}$, simply because these sets are open.  If $\bd_{\calf_1}$ lies in $\calBC_{n_1}$ then Proposition 1.12 of \cite{DeB_cyclic_geom} implies that $d_{e_{v_1}} = b_0(d_{e_0},\hdots,d_{e_{n_1-1}})$ for the function $b_0$ defined there, where we have enumerated $\calf_1$ as $\{e_0,\hdots,e_{n_1-1},e_{v_1}\}$.  Since $b_0$ strictly increases in each variable (also by that result), and $d_{e_0}(t) < d_{e_0}$ for each $t>0$, it follows that $b_0(d_{e_0}(t),\hdots,d_{e_{n_1-1}}(t)) < b_0(d_{e_1},\hdots,d_{e_{n_1-1}})$ for each such $t$.  Then $d_{e_{v_1}}(t) \equiv d_{e_{v_1}}$ exceeds $b_0(d_{e_0}(t),\hdots,d_{e_{n_1-1}}(t))$ for each $t>0$, so the claim follows in this case from \cite[Cor.~4.10]{DeB_cyclic_geom}.\end{proof}

By Lemma \ref{deform many polys}, $\bd_{\calf_1}(t)$ determines a deformation $\bd_{\cale_1}(t)$ of $\bd_{\cale_1}$ for small $t\geq 0$ such that properties (1) - (3) of Lemma \ref{many polys} continue to hold for $\bd_1(t) = (\bd_{\cale_1}(t),\bd_{\calf_1}(t))$.  We extend $\bd_1(t)$ to $\bd(t) = (\bd_{\cale}(t),\bd_{\calf}(t))$ by taking $d_e(t)\equiv d_e$ for each $e\in\cale\cup\calf - (\cale_1\cup\calf_1)$.  In particular $\bd_{\calf}(t) \equiv \bd_{\calf}$ since $e_0\in\cale$.  We claim that $\bd(t)\in\overline{\mathit{Ad}}(\bd_{\calf})$ for small enough $t>0$.

The only edges of $T$ that change length under $\bd(t)$ are edges of $T_0$, so if $P_v(\bd)$ changes under $\bd(t)$ then $v\in T_0$.  Property (1) of Definition \ref{admissible closure} is thus immediate for $v\in T^{(0)}-T_0^{(0)}$.  It follows for $v\in T_0^{(0)}-\{v_1\}$ from Lemma \ref{not in BC} and the fact that $\calAC_{n_v} - (\calc_{n_v}\cup\calBC_{n_v})$ is open in $(\mathbb{R}^+)^{n_v}$, and, in the case $v_1\neq v_T$, for $v_1$ by combining \ref{not in C} above with the first assertion of Lemma \ref{many polys}.  Property (2) of Definition \ref{admissible closure}, that $P_{v_T}(\bd(t))\in\calc_{n_T}\cup\calBC_{n_T}$, is immediate if $v_1\neq v_T$ and otherwise follows from \ref{not in C} and Lemma \ref{deform many polys}.

For property (3) of Definition \ref{admissible closure} we must separately consider several possibilities for $v\in T^{(0)}$ and $w\in v-1$. If neither $v$ nor $w$ lies in $T_0$ then we have
$$J(P_v(\bd(t)))\equiv J(P_v(\bd)) \geq J(P_w(\bd)) \equiv J(P_w(\bd(t))) $$
for all $t$.  If $v$ is not in $T_0$ but $w$ is then $w = v_1\neq v_T$, and by definition of $T_0$ the initial inequality is strict: $J(P_v(\bd))>J(P_{v_1}(\bd))$.  So it is preserved for small $t>0$.  The same idea holds if $v\in T_0^{(0)}$ but $w\notin T_0$: the strict initial inequality $J(P_v(\bd))>J(P_w(\bd))$ is preserved for small $t>0$.  If $v$ and $w$ lie in $T_1$---ie.~$v,w\in T_0$ and $w\neq v_0$---then by Lemmas \ref{deform many polys} and \ref{many polys} $J(P_v(\bd(t)))\equiv J(\bd_{\calf_1}(t)) \equiv J(P_w(\bd(t)))$.  So the claim is finally proved by establishing property (3) in the case $w=v_0$, so $v\in T_1$.  This follows from:

\begin{subpara}\label{smaller J} For $t>0$, $J(\bd_{\calf_1}(t)) > J(P_{v_0}(\bd(t)))$.\end{subpara}

\begin{proof}[Proof of \ref{smaller J}]  We have $\frac{d}{dt} J(\bd_{\calf_1}(t)) = - \frac{\partial}{\partial d_{e_0}} J(\bd_{\calf_1})$ and $\frac{d}{dt} J(P_{v_0}(\bd(t))) = -\frac{\partial}{\partial d_{e_0}}J(P_{v_0}(\bd))$ at $t=0$.  Since $d_{e_0}$ is the largest entry of $P_{v_0}(\bd)\in\calAC_{n_{v_0}} - (\calc_{n_{v_0}}\cup\calBC_{n_{v_0}})$, Proposition 1.14 of \cite{DeB_cyclic_geom} implies that the latter derivative is less than $-1/2$.  In the case that $v_1\neq v_T$, that result implies that $\frac{d}{dt} J(\bd_{\calf_1}(t))>0$.  If $v_1 = v_T$ then since $\bd_{\calf_1}\in\calc_{n_1}$ by hypothesis it gives that this derivative is greater than $-1/2$, so in both cases we have the desired inequality.\end{proof}

We now show that $D_T(\bd(t))$ is decreasing for small $t>0$. To do so we use Lemma \ref{many polys} (which applies by construction of $\bd(t)$) to rewrite $\sum_{v\in T_1^{(0)}} D_0(P_v(\bd(t)))$ as $D_0(\bd_{\calf_1}(t))$, yielding:
\begin{align}\label{summa} D_T(\bd(t)) = D_0(\bd_{\calf_1}(t)) + D_0(P_{v_0}(\bd(t))) + \sum_{v\in T^{(0)}-T_0^{(0)}} D_0(P_v(\bd(t))) \end{align}
For any $v\in T^{(0)}$, if the edges of $\cale\cup\calf$ containing $v$ are $e_{i_1},\hdots,e_{i_k}$ (for $k=n_v$) then $P_v(\bd(t)) = (d_{e_{i_1}}(t),\hdots,d_{e_{i_k}}(t))$ and so by the chain rule we have:\begin{align}\label{first time}
  \frac{d}{dt}D_0(P_v(\bd(t))) = \sum_{j=1}^k \frac{\partial}{\partial d_{e_{i_j}}} D_0(P_v(\bd(t)))\,\frac{d}{dt}d_{e_{i_j}}(t) 
\end{align}
If $v\in T^{(0)}-T_0^{(0)}$ this implies in particular that $D_0(P_v(\bd(t)))$ is constant, since $d_e(t)$ is constant for each edge $e$ containing such a vertex $v$.  So the rightmost sum of (\ref{summa}) is constant in $t$.  We now compute $\frac{d}{dt}\left(D_0(\bd_{\calf_1}(t)) + D_0(P_{v_0}(\bd(t)))\right)$ by applying the chain rule as in (\ref{first time}) and Proposition 2.3 of \cite{DeB_cyclic_geom}.  This gives:
$$ \sqrt{\frac{1}{\cosh^2(d_{e_0}/2)} - \frac{1}{\cosh^2 J(P_{v_0}(\bd(t)))}} - \sqrt{\frac{1}{\cosh^2(d_{e_0}/2)} - \frac{1}{\cosh^2 J(\bd_{\calf_1}(t))}}  $$
By \ref{smaller J}, this quantity is negative for $t>0$, so indeed $D_T(\bd(t))$ is decreasing.  The Lemma follows, since by hypothesis $\bd$ is a local minimizer for $D_T$ on $\overline{\mathit{Ad}}(\bd_{\calf})$, and we produced $\bd(t)$ assuming that either $v_T\notin T_0$ or $v_T\in T_0$ but $P_{v_t}\in\calc_{n_T}$.\end{proof}

\newcommand\BDeriv{The function $b_0\co(\mathbb{R}^+)^{n-1}\to \mathbb{R}^+$ defined in Proposition 1.12 of \cite{DeB_cyclic_geom} satisfies $0<\frac{\partial}{\partial d_i} b_0(d_1,\hdots,d_{n-1})<1$ for each $i$.}

\begin{proof}[Proof of Proposition \ref{equal radii}] For $T_0$ as described in the Proposition we have $v_T\in T_0$ and $P_{v_T}(\bd)\in\calBC_{n_T}$, by Lemma \ref{halfway there}.  Let $e_T\in\cale\cup\calf$ be the edge containing $v_T$ that has $d_{e_T}$ maximal among all such edges.  We now suppose by way of contradiction that $e_T\in\cale$, and that its other endpoint $v_T'$ is not on the boundary of $T_0$.  We will show that then $\bd$ is still not a local minimum of $D_T$ on $\overline{\mathit{Ad}}(\bd_{\calf})$.

We begin by observing that Proposition 1.11 of \cite{DeB_cyclic_geom} shows that $v_T'\in T_0$ and $P_{v_T'}(\bd)\in\calBC_{n_T'}$, since $J(P_{v_T}(bd)) = d_{e_T}/2$.  This is because on the one hand, $J(P_{v_T'}(\bd)) \leq J(P_{v_T}(\bd)) = d_{e_T}/2$ by Definition \ref{admissible closure}(3), but on the other $J(P_{v_T'}(\bd))\geq d_{e_T}/2$ since the circumcircle radius of a cyclic polygon is at least half of each of its side lengths (cf.~\cite[Prop.~1.5]{DeB_cyclic_geom}).  

Let $T'$ be the maximal subtree of $T_0$ containing $v_T'$ but not $v_T$, let $v_0'$ be a farthest vertex of $T'$ from $v_T'$, and refer by $e_0'$ to the initial edge $e_{v_0'}$ of the arc joining $v_0'$ to $v_T'$.  Let $T_1' = T' - (\mathit{int}(e_0')\cup v_0')$, and let $\calf_1'$ be its frontier in $V$.  We enumerate $\calf_1'$ as $\{e_0',\hdots,e_{n_1'-2},e_T\}$, so in particular $n_1' = |\calf_1'|$, and define a deformation $\bd_{\calf'_1}(t)$ of the tuple $\bd_{\calf'_1}$ that takes its entries from $\bd$ as follows: take $d_{e_0'}(t) = d_{e_0'}-t$, let $d_{e_i'}(t)\equiv d_{e_i'}$ for all $i>0$, and define $d_{e_T}(t) = b_0(d_{e_0'}(t),d_{e_1'},\hdots,d_{e_{n_1'-2}})$, for $b_0$ from Proposition 1.12 of \cite{DeB_cyclic_geom}.

Note that for the tuple $\bd_1' = (\bd_{\cale'_1},\bd_{\calf_1'})$ that takes its entries from $\bd$, where $\cale_1'$ is the edge set of $T_1'$, we have $P_{v_T'}(\bd_1') = P_{v_T'}(\bd)\in\calBC_{n_T'}$.  If we take $v_T'$ as the root vertex of $T_1'$ it satisfies the hypotheses of Lemma \ref{many polys}, so by that result $\bd_{\calf_1'}\in\calBC_{n_1'}$.  In particular, $d_{e_T} = b_0(d_{e_0'},\hdots,d_{e_{n_1'-1}})$ by \cite[Prop.~1.12]{DeB_cyclic_geom}, so $\bd_{\calf_1'}(0) = \bd_{\calf_1'}$.  We now apply Lemma \ref{deform many polys} to produce a deformation $\bd_{\cale_1'}(t)$ of $\bd_{\cale_1'}$ such that $\bd_1'(t) = (\bd_{\cale_1'}(t),\bd_{\calf_1'}(t))$ satisfies the hypotheses of Lemma \ref{many polys}.

On the other side we define $T_1 = T_0-(T'\cup \mathit{int}(e_T))$, call $\calf_1$ its frontier in $V$ and let   $\bd_{\calf_1}$ be the tuple that takes its entries from $\bd$.  We define a deformation $\bd_{\calf_1}(t)$ of $\bd_{\calf_1}$ by taking $d_{e_T}(t)$ as prescribed above and $d_f(t)\equiv d_f$ for each other edge $f\in\calf_1$.  Let $\cale_1$ be the edge set of $T_1$ and $v_T$ its root vertex.  The tuple $\bd_1 = (\bd_{\cale_1},\bd_{\calf_1})$ taking entries from $\bd$ then satisfies the hypotheses of Lemma \ref{many polys}, inheriting properties (1) and (2) there from $T$ (recalling for (2) that $e_T\in\calf_1$) and (3) from $T_0$.  We thus apply Lemma \ref{deform many polys} to produce $\bd_{\cale_1}(t)$ such that $\bd_1(t) = (\bd_{\cale_1}(t),\bd_{\calf_1}(t))$ satisfies the hypotheses of Lemma \ref{many polys}.

Note that $(\cale_1\cup\calf_1)\cap(\cale'_1\cup\calf'_1) = \{e_T\}$, and we have defined $\bd_1(t)$ and $\bd_1'(t)$ so that their entries corresponding to $e_T$ agree.  We now define $\bd(t) = (\bd_{\cale}(t),\bd_{\calf}(t))$ by taking each $d_e(t)$ from $\bd_1(t)$ if $e\in\cale_1\cup\calf_1$, from $\bd_1'(t)$ if $e\in\cale_1'\cup\calf_1'$, and otherwise letting $d_e(t)\equiv d_e$.  Note that the only entries of $\bd_{\calf_1}(t)$ and $\bd_{\calf_1'}(t)$ that change with $t$ correspond to $e_T$ and $e_0'$, each of which lies in $\cale$, so $\bd_{\calf}(t)\equiv\bd_{\calf}$.  The proof of the Proposition will be completed by showing first that $\bd(t)\in\overline{\mathit{Ad}}(\bd_{\calf})$, then that $D_T(\bd(t))$ is decreasing, for all small enough $t$.

To show that $\bd(t)\in\overline{\mathit{Ad}}(\bd_{\calf})$ we check the criteria of Definition \ref{admissible closure}, beginning with (1).  For all vertices $v$ outside $T_0$, $P_v(\bd(t))\equiv P_v(\bd)\in\calAC_{n_v} - \calc_{n_v}$ has maximum entry $d_{e_v}$ by hypothesis.  For $v\in T_0^{(0)} - \{v_T,v_T'\}$, Lemma \ref{not in BC} asserts that $P_v(\bd)\in \calAC_{n_v} - (\calc_{n_v}\cup\calBC_{n_v})$, so $P_v(\bd(t))$ remains here for small $t>0$ since $\calAC_{n_v} - (\calc_{n_v}\cup\calBC_{n_v})$ is open.  And we chose $\bd_{\calf_1'}(t)\in\calBC_{n'}$ for all $t$, and $\bd_{\cale_1'}(t)$ so that $\bd_1' = (\bd_{\cale_1'}(t),\bd_{\calf_1'}(t))$ satisfies the hypotheses of Lemma \ref{many polys}, so that result implies that $P_{v_T'}(\bd(t))\in\calBC_{n_T'}$ for all $t$.  For each $v\in T^{(0)}-\{v_T\}$, $d_{e_v}$ is the unique maximal entry of $P_v(\bd)$ since it is not in $\calc_{n_v}$ (this follows from \cite[Prop.~2.2]{DeB_cyclic_geom}), so $d_{e_v}(t)$ remains the maximal entry for small $t>0$.

For criterion (2) of Definition \ref{admissible closure} we note first that $\bd_{\calf_1}\in\calBC_{n_1}$ by Lemma \ref{many polys}, where $n_1 = |\calf_1|$, since $P_{v_T}(\bd) = P_{v_T}(\bd_1)\in\calBC_{n_T}$ by hypothesis.  If we enumerate $\calf_1$ as $\{e_T,e_1,\hdots,e_{n_1-1}\}$ then $d_{e_T} > d_{e_i}$ for all $i$, again by Lemma \ref{many polys}, since $d_{e_T}$ is by hypothesis maximal among the $d_e$ for $e$ containing $v_T$.  It therefore follows from Proposition 1.12 of \cite{DeB_cyclic_geom} that $d_{e_T} = b_0(d_{e_1},\hdots,d_{e_{n_1-1}})$ for $b_0$ as defined there.  For $t\geq 0$, our definition of $d_{e_T}(t)$ and the chain rule give $\frac{d}{dt}d_{e_T}(t) = -\frac{\partial b_0}{\partial d_{e_0'}}$.  That this is negative follows from:

\newtheorem*{BDerivLemma}{Lemma \ref{b_0 deriv}}
\begin{BDerivLemma}\BDeriv\end{BDerivLemma}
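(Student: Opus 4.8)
The plan is to obtain an explicit formula for $\partial b_0/\partial d_i$ by implicitly differentiating the equation that defines $b_0$, and then to bound it using elementary inequalities. Recall from Propositions 1.11 and 1.12 of \cite{DeB_cyclic_geom} that setting $d_0 = b_0(d_1,\dots,d_{n-1})$ makes $(d_0,d_1,\dots,d_{n-1})$ the side-length tuple of a semicyclic $n$-gon with circumcircle center on the side of length $d_0$, which happens precisely when that side is a diameter of the circumcircle; so $b_0 = 2\rho$, where $\rho$ is the circumradius. The hyperbolic law of cosines applied to the isosceles triangle joining the center to the endpoints of a non-diameter side of length $d_j$ gives $\sinh(d_j/2) = \sinh\rho\,\sin\alpha_j$, where $2\alpha_j\in(0,\pi)$ is the central angle that side subtends, and since the $n-1$ non-diameter sides sweep out a semicircle we have $\sum_{j=1}^{n-1}\alpha_j = \pi/2$. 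Eliminating the $\alpha_j$ turns this into
\begin{equation*}
\sum_{j=1}^{n-1}\arcsin\!\left(\frac{\sinh(d_j/2)}{\sinh\rho}\right) = \frac{\pi}{2},
\end{equation*}
whose left side strictly decreases in $\rho$, so it determines $\rho = b_0/2$ as a smooth function of $(d_1,\dots,d_{n-1})$. Note each $\alpha_j\in(0,\pi/2)$ since $b_0 > \max_j d_j$.

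Differentiating this relation with respect to $d_i$ and solving for $\partial b_0/\partial d_i = 2\,\partial\rho/\partial d_i$, simplifying throughout with $\sqrt{\sinh^2\rho - \sinh^2(d_j/2)} = \sinh\rho\cos\alpha_j$, yields
\begin{equation*}
\frac{\partial b_0}{\partial d_i} = \frac{\cosh(d_i/2)}{\cosh\rho\,\cos\alpha_i\sum_{j=1}^{n-1}\tan\alpha_j}.
\end{equation*}
Each factor in the denominator is positive because $\alpha_j\in(0,\pi/2)$ for all $j$, which already gives $\partial b_0/\partial d_i > 0$. For the upper bound I would isolate the $j=i$ term of the sum using $\cos\alpha_i\tan\alpha_i = \sin\alpha_i$; then $\partial b_0/\partial d_i < 1$ becomes
\begin{equation*}
\cosh(d_i/2) - \cosh\rho\,\sin\alpha_i \;<\; \cosh\rho\,\cos\alpha_i\sum_{j\neq i}\tan\alpha_j .
\end{equation*}

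The crux is the identity $\cosh^2(d_i/2) - \cosh^2\rho\,\sin^2\alpha_i = \cos^2\alpha_i$, immediate from $\cosh^2 x = 1+\sinh^2 x$ together with $\sinh^2\rho\,\sin^2\alpha_i = \sinh^2(d_i/2)$. It rewrites the left-hand side of the last display as $\cos^2\alpha_i/(\cosh(d_i/2)+\cosh\rho\,\sin\alpha_i)$, and the denominator here exceeds $1$, so that left-hand side is strictly less than $\cos^2\alpha_i$. It therefore suffices to show $\cos\alpha_i \le \cosh\rho\sum_{j\neq i}\tan\alpha_j$. Since $\sum_j\alpha_j = \pi/2$ we have $\cos\alpha_i = \sin(\sum_{j\neq i}\alpha_j)$, and $\sum_{j\neq i}\alpha_j$ is a sum of at least one positive term (as $n\ge 3$) lying in $(0,\pi/2)$, so $\sin x < x < \tan x$ on $(0,\pi/2)$ gives
\begin{equation*}
\cos\alpha_i = \sin\Big(\,\sum_{j\neq i}\alpha_j\Big) < \sum_{j\neq i}\alpha_j < \sum_{j\neq i}\tan\alpha_j \le \cosh\rho\sum_{j\neq i}\tan\alpha_j ,
\end{equation*}
using $\cosh\rho \ge 1$. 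Multiplying by $\cos\alpha_i > 0$ and chaining with the previous display completes the argument.

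The main obstacle is not a single hard step but setting up the right reduction: the implicit-differentiation formula must be massaged into the displayed form, and then one has to spot the identity $\cosh^2(d_i/2) - \cosh^2\rho\,\sin^2\alpha_i = \cos^2\alpha_i$, which is exactly what collapses the hyperbolic estimate to the planar inequality $\sin x < x < \tan x$. As a sanity check I would look at $d_i\to 0$ (where $\alpha_i\to 0$ and $\cosh(d_i/2)\to 1$, while $\sum_{j\neq i}\tan\alpha_j$ stays bounded below by a quantity exceeding $\pi/2$) and at $d_i\to b_0$ (where $\cos\alpha_i\to 0$ keeps the ratio controlled); since the estimate above holds pointwise, both regimes are handled without special treatment.
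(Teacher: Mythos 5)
Your proof is correct, and at the decisive step it takes a genuinely different route from the paper's. Both arguments begin from the same implicit differentiation of the defining relation for $b_0$ (the paper phrases it via the isosceles-angle function $\theta(d,J)$, you via the equivalent $\arcsin$ relation $\sum_j\arcsin(\sinh(d_j/2)/\sinh\rho)=\pi/2$), but they then part ways: the paper simplifies the resulting quotient to $\cosh(d_i/2)\sinh(b_0/2)\big/\bigl(\cosh(b_0/2)\sum_j\sinh(d_j/2)\bigr)$ and concludes from the two monotonicity facts $b_0>d_i$ and $\sum_j\sinh(d_j/2)>\sinh(b_0/2)$, whereas you carry the computation through to $\cosh(d_i/2)\big/\bigl(\cosh\rho\,\cos\alpha_i\sum_j\tan\alpha_j\bigr)$ and then need the identity $\cosh^2(d_i/2)-\cosh^2\rho\sin^2\alpha_i=\cos^2\alpha_i$ together with $\sin x<x<\tan x$. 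These two closed forms are not the same: they agree only when all $d_j$ are equal, and testing against the explicit formula $b_0(a,b)=2\sinh^{-1}\sqrt{\sinh^2(a/2)+\sinh^2(b/2)}$ of Lemma~\ref{pythagoras} with $a\neq b$ confirms your expression and not the paper's. So the extra work in your argument is not an inefficiency --- it is exactly what is needed once the quotient is simplified correctly, and it yields a complete, verifiable proof of the lemma; the paper's shorter conclusion rests on a simplification that does not hold in general (though the lemma itself is of course still true). Two cosmetic remarks: the relation $\sinh(d_j/2)=\sinh\rho\,\sin\alpha_j$ is most directly the right-triangle sine rule obtained by bisecting the central isosceles triangle (it does also follow from the law of cosines, with a little more manipulation), and your parenthetical that $\sum_{j\neq i}\alpha_j$ is nonempty is the right thing to flag, since the whole argument for the upper bound would collapse for a ``bigon,'' which is excluded because $n\geq 3$.
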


We will prove Lemma \ref{b_0 deriv} after finishing the current proof.  It implies that for all $t>0$:
$$d_{e_T}(t) < b_0(d_{e_1}(t),\hdots,d_{e_{n_1-1}}(t)) \equiv b_0(d_{e_1},\hdots,d_{e_{n_1-1}})$$
It follows that $\bd_{\calf_1}(t)\in\calc_{n_1}$ (see \cite[Cor.~4.10]{DeB_cyclic_geom}), and hence that $P_{v_T}(\bd(t)) = P_{v_T}(\bd_1(t))\in\calc_{n_T}$, since we constructed $\bd_1(t)$ to satisfy the hypotheses of Lemma \ref{many polys}.

For criterion (3) of Definition \ref{admissible closure} we note that by construction and Lemma \ref{deform many polys}, $J(P_v(\bd(t))) = J(\bd_{\calf_1}(t))$ for all vertices $v$ of $T_1$ and $J(P_v(\bd(t))) = J(\bd_{\calf_1'}(t))$ for $v$ in $T_1'$.  For any vertex $v$ of $T$ that lies outside $T_0$, $P_v(\bd(t)) \equiv P_v(\bd)$ for all $t$, and the strict inequality $J(P_v(\bd))<J(P_w(\bd))$ for any $w\in T_0^{(0)}$ is preserved for small $t$.  The only vertex of $T_0$ that does not lie in $T_1$ or $T_1'$ is $v_0'$, so to check (3) we must only establish that $J(\bd_{\calf_1}(t))\geq J(\bd_{\calf_1'}(t)) \geq J(P_{v_0'}(\bd(t)))$ for all small $t>0$.

We first address $J(\bd_{\calf_1}(t))$.  Applying the chain rule gives:\begin{align}\label{chain rule}
  \frac{d}{dt} J(\bd_{\calf_1}(t)) = \frac{\partial J}{\partial d_{e_T}}\,\frac{d}{dt}d_{e_T}(t) + \sum_{i=0}^{n_1'-2} \frac{\partial J}{\partial d_{e_i}}\,\frac{d}{dt}d_{e_i}(t)\end{align}
By construction, $\frac{d}{dt}d_{e_i}(t) \equiv 0$ for all $i\geq 1$, so the quantity inside the summation above vanishes.  Proposition 1.14 of \cite{DeB_cyclic_geom} implies that if $\bd = (d_0,\hdots,d_{n-1})$ lies in $\calc_n$ then $0<\frac{\partial J}{\partial d_i}(\bd)<\frac{1}{2}$ for all $i$, and if $\bd\in\calBC_n$ then $\frac{\partial J}{\partial d_i}(\bd)=\frac{1}{2}$ if $d_i$ is the largest entry.  Applying this result and the observation above that $\bd_{\calf_1}(0)\in\calBC_{n_1}$ with largest entry $d_{e_T}$, and $\bd_{\calf_1}(t)\in\calc_{n_1}$ for small $t>0$, gives that $\frac{d}{dt} J(\bd_{\calf_1}(0)) = \frac{1}{2}\frac{d}{dt}d_{e_T}(0)$ and $ \frac{1}{2}\frac{d}{dt}d_{e_T}(t)<\frac{d}{dt} J(\bd_{\calf_1}(t)) <0$ for $t>0$.  (Here recall from above Lemma \ref{b_0 deriv} that $\frac{d}{dt}d_{e_T}(t) < 0$ for all $t$.)

Applying the chain rule to $\frac{d}{dt}J(\bd_{\calf_1'}(t))$ in the same way as in (\ref{chain rule}) gives $\frac{d}{dt} J(\bd_{\calf_1}(t)) = \frac{\partial J}{\partial d_{e_T}}\frac{d}{dt}d_{e_T}(t) + \frac{\partial J}{\partial d_{e_0'}}\,\frac{d}{dt}d_{e_0'}(t)$.  We chose $d_{e_T}(t)$ so that $\bd_{\calf_1'}(t)\in\calBC_{n_1'}$ for all $t$, so \cite[Prop.~1.14]{DeB_cyclic_geom} implies that $\frac{\partial J}{\partial d_{e_0'}} \equiv 0$ and $\frac{\partial J}{\partial d_{e_T}} \equiv \frac{1}{2}$.  (These equalities follow from the inequalities recorded in that result by continuity of the partial derivatives of $J$, recalling from \cite{DeB_cyclic_geom} that $\calBC_n$ is the frontier of $\calc_n$ in $\calAC_n$ for any $n$, and from above that $d_{e_T}(t)$ is the largest entry of $\bd_{\calf_1}(t)$ for small $t\geq 0$.)  It follows that $\frac{d}{dt}J(\bd_{\calf_1'}(t)) = \frac{1}{2}\frac{d}{dt}d_{e_T}(t)$ for all $t$, hence that $J(\bd_{\calf_1}(t)) \geq J(\bd_{\calf_1'}(t))$ for all $t\geq 0$.

We finally compute $\frac{d}{dt}J(P_{v_0'}(\bd(t)))$ as in (\ref{chain rule}), yielding $\frac{\partial J}{\partial d_{e_0'}}\,\frac{d}{dt}d_{e_0'}(t) = -\frac{\partial J}{\partial d_{e_0'}}$.  We established above that $P_{v_0'}(\bd(t))$ remains in $\calAC_{n_{v_0'}} - (\calc_{n_{v_0'}}\cup\calBC_{n_{v_0'}})$ for all small $t\geq 0$, so \cite[Prop.~1.14]{DeB_cyclic_geom} implies that $\frac{\partial J}{\partial d_{e_0'}} > \frac{1}{2}$ for all such $t$.  Therefore $\frac{d}{dt}J(P_{v_0'}(\bd(t)) < -\frac{1}{2}$ for all $t$.  But Lemma \ref{b_0 deriv} implies that $\frac{d}{dt}d_{e_T}(t) = -\frac{\partial b_0}{\partial d_{e_0'}} > -1$, so $\frac{d}{dt}J(\bd_{\calf_1'}(t)) > -\frac{1}{2}$ for all $t\geq 0$.  It follows that $J(\bd_{\calf_1'}(t)) \geq J(P_{v_0'}(\bd(t)))$ for all $t\geq 0$ as claimed, and hence that $\bd(t)\in\overline{\mathit{Ad}}(\bd_{\calf})$ for such $t$.

To show that $D_T(\bd(t))$ is decreasing we use the following consequence of our construction and Lemma \ref{many polys}:
$$ D_T(\bd(t)) = D_0(P_{v_0'}(\bd(t))) + D_0(\bd_1'(t)) + D_0(\bd_1(t)) + \sum_{v\in T^{(0)}-T_0^{(0)}} D_0(P_v(\bd(t))) $$
We now apply the chain rule and \cite[Prop.~2.3]{DeB_cyclic_geom} to compute $\frac{d}{dt} D_T(\bd(t))$, yielding:
\begin{align}\label{D_T deriv}
   -\left[\sqrt{\frac{1}{\cosh^2(d_{e_0'}(t)/2)} - \frac{1}{\cosh^2 J(\bd_{\calf_1'}(t))}} -\sqrt{\frac{1}{\cosh^2(d_{e_0'}(t)/2)} - \frac{1}{\cosh^2 J(P_{v_0'}(\bd(t)))}}\right]\quad\nonumber\\
   + \frac{d}{dt}d_{e_T}(t)\sqrt{\frac{1}{\cosh^2(d_{e_T}(t)/2)} - \frac{1}{\cosh^2 J(\bd_{\calf_1}(t))}} \end{align}
Here we are using the fact that only $d_{e_0'}(t)$ and $d_{e_T}(t)$ are non-constant among all entries of $\bd(t)$; that $d_{e_0'}(t)$ is an entry of $P_{v_0'}(\bd(t))$ and $\bd_{\calf_1'}(t)$, and that $\frac{d}{dt}d_{e_0'}(t) = -1$.  This yields the top line above.  For the bottom line we use that $d_{e_T}(t)$ is an entry of $\bd_{\calf_1'}(t)$ and $\bd_{\calf_1}(t)$, and that $J(\bd_{\calf_1'}(t)) \equiv d_{e_T}/2$, which implies that $\frac{\partial}{\partial d_{e_T}}D_0(\bd_{\calf_1'}(t)) \equiv 0$.

The derivative recorded above is $0$ at $t=0$, since there all circumcircle radii are equal to $d_{e_T}/2$.  But it is negative for small $t>0$, since we showed above that $J(\bd_{\calf_1'}(t)) > J(P_{v_0'}(\bd(t)))$ and $\frac{d}{dt}d_{e_T}(t) < 0$ for such $t$.  It follows that $D_T(\bd(t))$ decreases with $t$, and we have proved that if $\bd\in\overline{\mathit{Ad}}(\bd_{\calf})$ is a local minimizer for $\bd\mapsto D_T(\bd)$ such that $J(P_v(\bd)) = J(P_w(\bd))$ for some $v\in T^{(0)}$ and $w\in v-1$ then $v_T\in T_0$, $P_{v_T}(\bd)\in\calBC_{n_T}$, and either $e_T\in\calf$ or its other endpoint $v_T'$ lies on the boundary of $T_0$.

We have proved the third bulleted assertion of Proposition \ref{equal radii}.  It remains to show for $\bd$ as above that if $e_T\in\cale$ then $P_v(\bd)\in\calBC_{n_v}$ for each $v\in v_T'-1$.  Suppose not; ie.~that there exists $v_0'\in v_T'-1$ such that $P_{v_0'}(\bd)\in\calAC_{n_{v_0'}} - (\calc_{n_{v_0'}}\cup\calBC_{n_{v_0'}})$.  Let $e_0'$ denote $e_{v_0'}$, and take $T_1' = \{v_T'\}$ and $T_1 = T_0-(T_1'\cup \mathit{int}(e_T))$.  The frontier $\bd_{\calf_1'}$ of $T_1'$ in $V$ is the set of edges containing $v_T'$, and we take $\bd_{\calf_1'} = P_{v_T'}(\bd)$.  Letting $\cale_1$ and $\calf_1$ denote the edge set and frontier in $V$ of $T_1$, respectively, we define $\bd_{\calf_1'}(t)$, $\bd_{\calf_1}(t)$, $\bd_{\cale_1}(t)$ and $\bd(t)$ exactly as in the previous case (note that here $\cale_1'=\emptyset$).  That is, we let $d_{e_0'}(t) = d_{e_0'}-t$; choose $d_{e_T}(t)$ as before so that $\bd_{\calf_1'}(t)\in\calBC_{n_1'}(t)$ for all $t$, where $n_1' = |\calf_1'|$; let Lemma \ref{deform many polys} determine $\bd_{\cale_1}(t)$; and let $d_e(t)\equiv d_e$ for all $e\in (\cale\cup\calf)- (\calf_1'\cup\cale_1\cup\calf_1)$.

The same argument as in the previous case now shows that $\bd(t)\in\overline{\mathit{Ad}}(\bd_{\calf})$ for small enough $t>0$, and $D_T(\bd(t))$ decreases in $t$, with only a couple slight modifications.  We first note that it is still true that $J(P_{v_0'}(\bd)) > J(P_v(\bd))$ for all $v \in v_0'-1$, since we have already showed that if not then $v_T$ is in the maximal subtree containing $v_0'$ with all vertices $v$ satisfying $J(P_v(\bd)) = J(P_{v_0'}(\bd))$.  This fact is necessary for showing that $\bd(t)\in\overline{\mathit{Ad}}(\bd_{\calf})$ for small enough $t>0$.  And the computation of $\frac{d}{dt}D_T(\bd(t))$ is identical, but in this case the derivative is negative at $t=0$ since the fact that $v_0'$ is not in $T_0$ implies that $J(P_{v_0'}(\bd)) < J(\bd_{\calf_1'})$ (in the previous case equality held).  But this only helps us, and the result follows.\end{proof}

\begin{lemma}\label{b_0 deriv}\BDeriv\end{lemma}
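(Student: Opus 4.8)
The plan is to get a closed formula for the partial derivatives of $b_0$ by implicit differentiation, using the description of a semicyclic polygon in terms of its circumradius and the central angles subtended by its sides, and then to reduce both desired inequalities to an elementary estimate for $\tan$ under the constraint that the relevant angles sum to $\pi$. The positivity statement $\frac{\partial}{\partial d_i}b_0>0$ is already (re)established inside \cite[Prop.~1.12]{DeB_cyclic_geom} and will also be visible from the formula, so the real content is the upper bound $\frac{\partial}{\partial d_i}b_0<1$.

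First I would recall the geometry behind \cite[Prop.~1.12]{DeB_cyclic_geom}: a semicyclic $n$-gon with circumradius $r$ has a unique longest side, equal to a diameter $2r$, and its other $n-1$ sides are chords subtending central angles $\theta_1,\dots,\theta_{n-1}$ at the circumcenter with $\sum_j\theta_j=\pi$ and $\sinh(d_j/2)=\sinh r\,\sin(\theta_j/2)$ for each $j$. Hence $b_0(d_1,\dots,d_{n-1})=2r$, where $r=r(d_1,\dots,d_{n-1})$ is determined implicitly by $\sum_j\theta_j=\pi$ after solving the chord relation for $\theta_j=2\arcsin\!\big(\sinh(d_j/2)/\sinh r\big)$. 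Writing $\phi_j=\theta_j/2\in(0,\pi/2)$, so $\sum_j\phi_j=\pi/2$, differentiating the chord relation in $d_j$ and in $r$ (and using $\cosh(d_j/2)=\sqrt{1+\sinh^2 r\,\sin^2\phi_j}$), and then using $\sum_j d\phi_j=0$ (from the constraint) to solve the resulting linear system for $dr$ in terms of the $dd_j$, I expect the factors of $\cosh(d_j/2)$ and an overall factor of $2$ to cancel, leaving
$$\frac{\partial b_0}{\partial d_i}=2\,\frac{\partial r}{\partial d_i}=\frac{\cosh(d_i/2)}{\cosh r\;\cos\phi_i\;\sum_{j}\tan\phi_j}.$$
Positivity is then clear.

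For the upper bound it remains to show $\cosh(d_i/2)<\cosh r\,\cos\phi_i\sum_j\tan\phi_j$. Because the other $\phi_j$ are positive we have $\phi_i<\pi/2$ strictly, so $\cosh r=\sqrt{1+\sinh^2 r}>\sqrt{1+\sinh^2 r\,\sin^2\phi_i}=\cosh(d_i/2)$; hence it suffices to prove $\cos\phi_i\sum_j\tan\phi_j\ge1$, i.e.\ $\sum_j\tan\phi_j\ge\sec\phi_i$. Setting $\psi=\sum_{j\ne i}\phi_j=\pi/2-\phi_i\in(0,\pi/2)$ and using $\tan\phi_i=\cot\psi$ together with the identity $\sec\phi_i-\cot\psi=\frac{1-\cos\psi}{\sin\psi}=\tan(\psi/2)$, this reduces to $\sum_{j\ne i}\tan\phi_j\ge\tan(\psi/2)$, which I would get from the two elementary bounds $\sum_{j\ne i}\tan\phi_j>\sum_{j\ne i}\phi_j=\psi$ (since $\tan t>t$ on $(0,\pi/2)$) and $\psi>\tan(\psi/2)$ (since $2x-\tan x$ vanishes at $0$ and has derivative $1-\tan^2 x>0$ on $(0,\pi/4)$, while $\psi/2<\pi/4$).

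The hard part is really organizational: carrying the implicit differentiation through so that $\partial b_0/\partial d_i$ collapses to the clean closed form above, and making sure every chord–angle identity, the description of $\calBC_n$, and the partial-derivative formula for the circumradius agree with the normalizations in \cite{DeB_cyclic_geom} (Propositions 1.5, 1.11, 1.12 and 1.14 there). It is also worth flagging that the bound genuinely uses the global constraint $\sum_j\phi_j=\pi/2$: dropping all but the $j=i$ summand from $\sum_j\tan\phi_j$ only produces the useless estimate $\partial b_0/\partial d_i<\cosh(d_i/2)/(\cosh r\,\sin\phi_i)$, whose right-hand side always exceeds $1$, so all $n-1$ of the side lengths must enter.
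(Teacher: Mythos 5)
Your argument is correct, and it rests on the same mechanism as the paper's proof---implicit differentiation of the angle-sum relation $\sum_j\theta(d_j,b_0/2)=\pi$ that characterizes $\calBC_n$---but it differs in both the closed form obtained and the way the upper bound is extracted. The paper imports its formula from the proof of \cite[Prop.~1.12]{DeB_cyclic_geom}, namely $\partial b_0/\partial d_i=\cosh(d_i/2)\sinh(b_0/2)\big/\big(\cosh(b_0/2)\sum_j\sinh(d_j/2)\big)$, and then finishes in two lines from two geometric inputs: $b_0>d_i$ (so the ratio of $\cosh$'s is $<1$) and $\sum_j\sinh(d_j/2)>\sinh(b_0/2)$ (the polygon inequality encoding $\calBC_n\subset\calAC_n$). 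You instead derive $\partial b_0/\partial d_i=\cosh(d_i/2)\big/\big(\cosh(b_0/2)\cos\phi_i\sum_j\tan\phi_j\big)$ with $\phi_j=\theta_j/2$, and close with the constrained estimate $\cos\phi_i\sum_j\tan\phi_j\ge 1$ when $\sum_j\phi_j=\pi/2$, plus $\cosh(d_i/2)<\cosh(b_0/2)$; your reduction to $\sum_{j\ne i}\tan\phi_j\ge\tan(\psi/2)$ and the two elementary bounds are all correct, and the implicit requirement that some $j\ne i$ exist is automatic since $n\ge3$. One thing worth flagging: the two closed forms do not agree---substituting $\sinh(d_j/2)=\sinh(b_0/2)\sin\phi_j$ turns the paper's expression into $\cosh(d_i/2)/\big(\cosh(b_0/2)\sum_j\sin\phi_j\big)$, which is independent of where the index $i$ sits in the denominator while yours is not---and carrying out the differentiation with $\theta(d,J)=2\arcsin(\sinh(d/2)/\sinh J)$, as described in the paper's proof, supports your version; the paper's display appears to have dropped the $\cos(\theta_j/2)$ factors coming from $\partial\theta/\partial J$. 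So your route is not merely an alternative finish: it is self-contained where the paper's is quoted, and your trigonometric estimate is the one adapted to the correctly computed derivative, at the cost of a longer and more delicate final inequality than the paper's two-line appeal to known geometric facts.
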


\begin{proof}[Proof of Lemma \ref{b_0 deriv}] The proof of \cite[Prop.~1.12]{DeB_cyclic_geom} establishes the inequality $\frac{\partial b_0}{\partial d_i} > 0$.  There the first of the following equations is showed:\begin{align*}
  \frac{\partial}{\partial d_i} b_0(d_1,\hdots,d_{n-1}) = - \frac{\frac{\partial \theta}{\partial d}(d_i,b_0/2)}{\sum_{j=1}^{n-1} \frac{\partial\theta}{\partial J}(d_i,b_0/2)} = \frac{\cosh(d_i/2)\sinh(b_0/2)}{\cosh (b_0/2)\sum_j \sinh(d_j/2)} \end{align*}
Here $\theta(d,J)$ is the function described in Lemma 1.4 of \cite{DeB_cyclic_geom}, that measures the angle of an isosceles triangle with two sides of length $J$ and one of length $d$ at its vertex opposite the side of length $d$.  The latter equation above follows by simply computing partial derivatives.

We recall that $b_0 > d_i$ for each $i$, by \cite[Prop~1.12]{DeB_cyclic_geom}.  The result now follows by observing that $\sum_j \sinh(d_j/2) > \sinh(b_0/2)$, since by \cite[Prop.~1.11]{DeB_cyclic_geom}, $\calBC_n\subset \calAC_n$.  \end{proof}

\begin{corollary}\label{update}  Let $T\subset V$ be a compact rooted tree with root vertex $\{v_T\}$, frontier $\calf = \{f_0,\hdots,f_{n-1}\}$, and edge set $\cale$.  For $\bd_{\calf} = (d_{f_0},\hdots,d_{f_{n-1}})\in(\mathbb{R}^+)^{\calf}$ such that $\overline{\mathit{Ad}}(\bd_{\calf})\neq\emptyset$, at a point $\bd = (\bd_{\cale},\bd_{\calf})\in\overline{\mathit{Ad}}(\bd_{\calf})$ which is a local minimum of the map $\bd\mapsto D_T(\bd_{\cale},\bd_{\calf})$, either $P_v(\bd)\in\calBC_{n_v}$ for each $v\in T^{(0)} - \{v_T\}$ or the following hold.

$P_{v_T}(\bd)\in\calBC_{n_T}$, where $v_T$ has valence $n_T$ in $V$, and for the edge $e_T\in\cale\cup\calf$ containing $v_T$ such that $d_{e_T}$ is maximal among all such edges, either $e_T\in\calf$ or $e_T$ is an edge of the maximal subtree $T_0$ containing $v_T$ with the property that $J(P_v(\bd)) = J(P_{v_T}(\bd))$ for all $v\in T_0^{(0)}$, with its other endpoint $v_T'$ on the boundary of $T_0$.  Moreover, for every vertex $v$ of $T$ such that $P_v(\bd)\notin\calBC_{n_v}$, $e_v$ has its other endpoint in $T_0$.  In the case that $e_T\in\cale$, there is no such $v\in v_T'-1$.\end{corollary}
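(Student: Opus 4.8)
The plan is to derive Corollary \ref{update} from Proposition \ref{equal radii} by a case analysis on whether the dichotomy conclusion ``$P_v(\bd)\in\calBC_{n_v}$ for all $v\in T^{(0)}-\{v_T\}$'' holds. Suppose it does not, so there is a vertex $v\in T^{(0)}-\{v_T\}$ with $P_v(\bd)\notin\calBC_{n_v}$; by Definition \ref{admissible closure}(1) we then have $P_v(\bd)\in\calAC_{n_v}-(\calc_{n_v}\cup\calBC_{n_v})$ with unique largest entry $d_{e_v}$. The goal is to show that the second set of conclusions holds, and the key is to produce a vertex to which Proposition \ref{equal radii} applies, i.e.\ a $v_0\in T^{(0)}$ such that $J(P_v(\bd)) = J(P_{v_0}(\bd))$ for the terminal vertex $v$ of $e_{v_0}$.

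First I would establish the existence of an equal-radii adjacency by walking down $T$. Starting from a vertex $v$ with $P_v(\bd)\notin\calBC_{n_v}$, Definition \ref{admissible closure}(3) forces $J(P_v(\bd))\leq J(P_w(\bd))$ where $w$ is the other endpoint of $e_v$; the point is to show that somewhere along the path from $v$ toward $v_T$ two consecutive circumcircle radii must actually coincide. If no such coincidence occurred anywhere in $T$, then all the inequalities in Definition \ref{admissible closure}(3) along every descending path would be strict. One would then want to run the deformation machinery of Lemma \ref{halfway there}/Proposition \ref{equal radii} directly — shrinking the length of an outermost internal edge at a non-semicyclic leaf-adjacent vertex — and show $D_T$ strictly decreases, contradicting local minimality. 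Concretely: pick a farthest vertex $v_0$ from $v_T$ among those with $P_{v_0}(\bd)\notin\calBC_{n_{v_0}}$ (such exist since the path from our bad $v$ reaches toward $v_T$), shrink $d_{e_{v_0}}$ as in the proof of Lemma \ref{halfway there}, extend via Lemma \ref{deform many polys}, and check Definition \ref{admissible closure}(1)--(3) are preserved using that all relevant inequalities were strict; the derivative computation via \cite[Prop.~2.3]{DeB_cyclic_geom} gives strict decrease because $d_{e_{v_0}}$ is the largest entry of a polygon not in $\calc\cup\calBC$, so \cite[Prop.~1.14]{DeB_cyclic_geom} makes $\partial J/\partial d_{e_{v_0}} > 1/2$. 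This contradiction yields the desired $v_0$ with an equal-radii neighbor.

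Once Proposition \ref{equal radii} applies, it delivers directly: $v_T\in T_0$, $P_{v_T}(\bd)\in\calBC_{n_T}$, the alternative for $e_T$ (either $e_T\in\calf$ or $e_T\in\cale(T_0)$ with $v_T'$ on $\partial T_0$), and the final clause that if $e_T\in\cale$ then no $v\in v_T'-1$ has $P_v(\bd)\notin\calBC_{n_v}$. The one remaining assertion in Corollary \ref{update} not literally in Proposition \ref{equal radii} is ``for every vertex $v$ of $T$ with $P_v(\bd)\notin\calBC_{n_v}$, $e_v$ has its other endpoint in $T_0$.'' I would prove this by the same descending-path argument: if $v\notin T_0$ and $P_v(\bd)\notin\calBC_{n_v}$, then along the path from $v$ to $v_T$ one reaches $T_0$ at its nearest vertex $v_1 = v_T$ (by the first bullet of Proposition \ref{equal radii}), and strictness of all Definition \ref{admissible closure}(3) inequalities outside $T_0$ combined with the equal-radii condition inside $T_0$ lets us run the deformation of Lemma \ref{halfway there} at an outermost non-semicyclic vertex on that path — again contradicting local minimality. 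Hence every non-semicyclic vertex has its descending edge landing in $T_0$, which is exactly the claim (with the $e_T\in\cale$ subcase refining it at $v_T'$ via the last clause of Proposition \ref{equal radii}).

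The main obstacle I anticipate is the bookkeeping in the deformation arguments of the second and last paragraphs: one must verify that shrinking $d_{e_{v_0}}$ (or the relevant internal edge) keeps $\bd(t)$ in $\overline{\mathit{Ad}}(\bd_{\calf})$, and this requires carefully tracking which polygons $P_v$ change, re-invoking Lemma \ref{many polys} to collapse the affected subtree's contribution to a single $D_0$ term, and checking each clause of Definition \ref{admissible closure} — essentially re-running the proof of Lemma \ref{halfway there} in slightly more general position (the bad vertex need not be adjacent to $v_T$). The derivative sign, by contrast, should be routine once the setup is in place, since it reduces as in Lemma \ref{halfway there} to comparing $\partial J/\partial d_{e_{v_0}}$ against $1/2$ via \cite[Prop.~1.14]{DeB_cyclic_geom}. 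It is possible this corollary is instead meant to follow almost immediately by unwinding Proposition \ref{equal radii} plus Lemma \ref{not in BC}, in which case the ``other endpoint in $T_0$'' clause would come from observing that a non-semicyclic $v$ outside $T_0$ with $w = $ (other endpoint of $e_v$) outside $T_0$ is already covered, and the only boundary case $w = v_1 = v_T$ is handled by the structure of $T_0$; I would check that shortcut first before committing to the full deformation argument.
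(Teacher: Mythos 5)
Your plan hinges on the claim in your second paragraph: that if the first alternative fails, you can always \emph{force} an equal-radii adjacency (and hence applicability of Proposition \ref{equal radii}) by deriving a contradiction from local minimality when all the inequalities of Definition \ref{admissible closure}(3) are strict. That claim is false, and the deformation you propose to prove it does not stay in $\overline{\mathit{Ad}}(\bd_{\calf})$. Concretely, take $v_0$ your farthest non-semicyclic vertex and suppose its parent is $v_T$ with $P_{v_T}(\bd)\in\calBC_{n_T}$. Since there is no equal-radii adjacency, $d_{e_{v_0}}$ is \emph{not} the largest entry of $P_{v_T}(\bd)$ (otherwise $J(P_{v_T}(\bd))=d_{e_{v_0}}/2=J(P_{v_0}(\bd))$, a coincidence). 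Shrinking $d_{e_{v_0}}$ then pushes $P_{v_T}(\bd)$ out of $\calc_{n_T}\cup\calBC_{n_T}$ — the largest entry now exceeds $b_0$ of the others — violating Definition \ref{admissible closure}(2), so no contradiction results. This is not a removable corner case: the corollary explicitly allows local minima with $T_0=\{v_T\}$ and no equal-radii adjacency (its conclusion (c) then permits non-semicyclic vertices in $v_T-1$), so any argument that "rules out" this configuration must be wrong. Your proof also never establishes $P_{v_T}(\bd)\in\calBC_{n_T}$ in this residual case, nor the accompanying fact that $e_T\in\calf$ there.

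The paper avoids this by citing Proposition 3.23 of \cite{DeB_Voronoi} for the trichotomy (every local minimizer is semicyclic off the root, or has $P_{v_T}(\bd)\in\calBC_{n_T}$, or has an equal-radii adjacency), applying Proposition \ref{equal radii} only in the third case, and handling "criterion (2) without (3)" separately: there $T_0=\{v_T\}$, and $e_T\in\calf$ because $e_T\in\cale$ would force $J(P_{v_T'}(\bd))=d_{e_T}/2=J(P_{v_T}(\bd))$, i.e.\ a coincidence. The remaining clause (c) is proved not by your subtree-collapsing machinery but by a single-edge perturbation: if $P_v(\bd)\notin\calBC_{n_v}$ and the other endpoint $v'$ of $e_v$ lies outside $T_0$ (so in particular $v'\neq v_T$ and condition (2) is not in play), decreasing $d_{e_v}$ alone lowers $J(P_v)$, raises $J(P_{v'})$, preserves all the strict inequalities, and strictly decreases $D_T$. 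Your sketch of this last step is close in spirit but should perturb $e_v$ itself rather than "an outermost non-semicyclic vertex on the path," and your shortcut in the final paragraph still needs the external trichotomy to get off the ground.
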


\begin{proof}  Proposition 3.23 of \cite{DeB_Voronoi} asserts that any local minimizer $\bd$ for $\bd\mapsto D_T(\bd)$ on $\overline{\mathit{Ad}}(\bd_{\calf})$ satisfies one of three criteria that it lists.  (The result is only stated there for absolute minimizers, but inspecting its proof shows that it holds for local minimizers.)  Criterion (1) is the condition that $P_v(\bd)\in\calBC_{n_v}$ for all $v\in T^{(0)}-\{v_T\}$.  We thus suppose now that $\bd$ is a local minimum that does not satisfy (1).

Criterion (2) of \cite[Prop.~3.23]{DeB_Voronoi} is that $P_{v_T}\in\calBC_{n_T}$, and criterion (3) is that $J(P_v(\bd)) = J(P_w(\bd))$ for some $v\in T^{(0)}$ and $w\in v-1$.  But Proposition \ref{equal radii} implies that if $\bd$ satisfies criterion (3) then it also satisfies (2).  In fact, defining $T_0$ as we have here, Proposition \ref{equal radii} implies that $T_0$ contains every $v\in T^{(0)}$ such that $J_v(\bd) = J_w(\bd)$ for some $w\in T^{(0)}$ that is joined to $v$ by an edge of $T$; hence that $J_v(\bd) = J_{v_T}(\bd)$ for all such $v$.

For every vertex $v$ of $T$ outside $T_0$, it follows that the inequality of Definition \ref{admissible closure}(3) is strict; ie.~that $J(P_v(\bd)) > J(P_w(\bd))$ for all $w\in v-1$, and moreover, that $J(P_v(\bd)) < J(P_{v'}(\bd))$ for such $v$, where $v'$ is the other endpoint of $e_v$.  If $P_v(\bd)\notin\calBC_{n_v}$ and $v'\notin T_0$, then we claim that $d_{e_v}$ can be decreased slightly, keeping all other entries of $\bd$ constant, to produce $\bd'\in\overline{\mathit{Ad}}(\bd_{\calf})$, with $D_T(\bd')<D_T(\bd)$.

The key effects of the deformation are that $J(P_v(\bd')) < J(P_v(\bd))$ and $J(P_{v'}(\bd')) > J(P_{v'}(\bd))$, whereas $J(P_w(\bd')) = J(P_w(\bd))$ for all other $w\in T^{(0)}$.  But since we assumed that $v'\notin T_0$, if $w$ is the other endpoint of $e_{v'}$ then $J(P_w(\bd))>J(P_{v'}(\bd))$, so this inequality is preserved by choosing $d_{e_v}'$ near enough to $d_{e_v}$.  Similarly, if $w\in v-1$ the strict inequality $J(P_w(\bd))<J(P_v(\bd))$ is preserved upon choosing $d_{e_v}'$ near enough to $d_{e_v}$.  Arguing as in the next-to-last paragraph of the proof of \cite[Prop.~3.23]{DeB_Voronoi} shows that $P_{v'}(\bd')$ remains in $\calAC_{n_{v'}} - \calc_{n_{v'}}$, and since $P_v(\bd)$ lies in the open set $\calAC_{n_v} - (\calc_{n_v}\cup\calBC_{n_v})$ by hypothesis, it remains there for $d_{e_v}'$ near enough to $d_{e_v}$.  The fact that $D_T(\bd')<D_T(\bd)$ follows as in the second paragraph of the proof of \cite[Prop.~3.23]{DeB_Voronoi}, and the claim is proved.

The claim implies for all $v\in T^{(0)}$ such that $P_v(\bd)\notin\calBC_{n_v}$ that $e_v$ has its other endpoint in $T_0$.  The remaining properties of $T_0$ follow from Proposition \ref{equal radii}.  We note that as defined here we could have $T_0 = \{v_T\}$, if $\bd$ satisfies criterion (2) of \cite[Prop.~3.23]{DeB_Voronoi} but not (3), but in this case $e_T$ must lie in $\calf$.  This is because the fact that $P_{v_T}(\bd)\in\calBC_{n_T}$ implies that $J(P_{v_T}(\bd)) = d_{e_T}/2$ \cite[Prop.~1.11]{DeB_cyclic_geom}, so if $e_T\in\cale$ then the fact that its other endpoint $v_T'$ must satisfy $J(P_{v_T'}(\bd))\leq J(P_{v_T}(\bd))$ (by Definition \ref{admissible closure}(1)) but $J(P_{v_T'}(\bd))\geq d_{e_T}/2$ \cite[Prop.~1.5]{DeB_cyclic_geom} implies that $J(P_{v_T'}(\bd) = d_{e_T}/2 = J(P_{v_T}(\bd))$.\end{proof}

%%%%%%%%%%%%%%%%%%%%%%%%%%
\subsection{Allowing $\bd_{\calf}$ and $T$ to vary}\label{alla the marbles}  The first main result of this subsection, Proposition \ref{centered lower}, generalizes and strengthens Proposition 3.30 of \cite{DeB_Voronoi}.  The idea here is to bound $D_T(\bd)$ below for a fixed compact, rooted tree $T$ with frontier $\calf$, but with $\bd_{\calf}$ allowed to vary with its entries bounded below by those of some fixed $\bfb_{\calf}\in(\mathbb{R}^+)^{\calf}$.  After this we prove Lemma \ref{trivalent}, which compares minima of $D_T(\bd)$ for different trees $T$, then prove Theorem \ref{upgrade}.

\begin{proposition}\label{centered lower}  Suppose $T\subset V$ is a compact, rooted tree with root vertex $v_T$, edge set $\cale$, and frontier $\calf$. Given $\bfb_{\calf}\in(\mathbb{R}^+)^{\calf}$, define $\bfb_{\cale}\in(\mathbb{R}^+)^{\cale}$ by $b_e = b_e(\bfb_{\calf})$ for each $e\in\cale$, where $b_e\co(\mathbb{R}^+)^{\calf}\to\mathbb{R}^+$ is as in \cite[Lemma 3.19]{DeB_Voronoi}.  Enumerating the edges of $\cale\cup\calf$ containing $v_T$ as $e_0,\hdots,e_{n_T-1}$ so that $b_{e_0}$ is maximal, define $m_{e_0} = \min\{b_{e_0},b_0(b_{e_1},\hdots,b_{e_{n_T-1}})\}$, and take:
$$ B_T(\bfb_{\calf}) = D_0(m_{e_0},b_{e_1},\hdots,b_{e_{n_T-1}}) + \sum_{v\in T^{(0)} - \{v_T\}} D_0(P_v(\bfb_{\cale},\bfb_{\calf})) $$
Then for each $\bd_{\calf}\in(\mathbb{R}^+)^{\calf}$ such that $d_f\geq b_f$ for each $f\in\calf- \{e_0\}$ and $d_{e_0}\geq m_{e_0}$, and each $\bd\in\overline{\mathit{Ad}}(\bd_{\calf})$, $D_T(\bd)\geq B_T(\bfb_{\calf})$.\end{proposition}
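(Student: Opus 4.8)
The plan is to adapt the proof of \cite[Prop.~3.30]{DeB_Voronoi}, using the sharper minimizer analysis of Corollary \ref{update} where that argument used \cite[Prop.~3.23]{DeB_Voronoi}, and keeping careful track of the separate lower bounds $b_f$. The first reduction is to assume $\bd$ is a minimizer: if $\overline{\mathit{Ad}}(\bd_{\calf})=\emptyset$ there is nothing to prove, and otherwise, since $\overline{\mathit{Ad}}(\bd_{\calf})$ is compact and $D_T$ is continuous on it, replacing $\bd$ by a minimizer of $\bd_{\cale}\mapsto D_T(\bd_{\cale},\bd_{\calf})$ only decreases $D_T(\bd)$. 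The argument then runs by induction on the number of edges of $T$. In the base case $T=\{v_T\}$ we must see that $D_0(\bd_{\calf})\geq D_0(m_{e_0},b_{e_1},\hdots,b_{e_{n_T-1}})$ for every $\bd_{\calf}\in\calc_{n_T}\cup\calBC_{n_T}$ with $d_f\geq b_f$ for $f\neq e_0$ and $d_{e_0}\geq m_{e_0}$; this is the monotonicity-of-area content already present in \cite[Prop.~3.30]{DeB_Voronoi}: the partial derivative of $D_0$ in any non-diameter side is positive by \cite[Prop.~2.3]{DeB_cyclic_geom}, the choice $m_{e_0}=\min\{b_{e_0},b_0(b_{e_1},\hdots,b_{e_{n_T-1}})\}$ with $b_{e_0}$ maximal among the $b$'s guarantees $(m_{e_0},b_{e_1},\hdots,b_{e_{n_T-1}})\in\calc_{n_T}\cup\calBC_{n_T}$ by \cite[Cor.~4.10]{DeB_cyclic_geom}, and one decreases the sides one at a time.

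For the inductive step I would apply Corollary \ref{update} to the minimizer $\bd$. In its first case, where $P_v(\bd)\in\calBC_{n_v}$ for all $v\in T^{(0)}-\{v_T\}$, choose a vertex $v_0$ of $T$ farthest from $v_T$; then $v_0$ is a leaf, all its edges but $e_{v_0}$ are frontier edges of $T$, and $P_{v_0}(\bd)$ is semicyclic with diameter $e_{v_0}$ (by Definition \ref{admissible closure}(1), $d_{e_{v_0}}$ is its largest entry, and a semicyclic polygon's diameter is its unique longest side). Put $T'=\overline{T-e_{v_0}}$, so that $e_{v_0}$ becomes a frontier edge and $D_T(\bd)=D_0(P_{v_0}(\bd))+D_{T'}(\bd')$ for the restriction $\bd'$ of $\bd$. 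One checks that $\bd'$ lies in $\overline{\mathit{Ad}}_{T'}(\bd'_{\calf'})$ and still minimizes $D_{T'}$ there (extending any strictly better competitor by the same semicyclic polygon $P_{v_0}$ would contradict minimality of $\bd$, using $J(P_{v'}(\cdot))\geq d_{e_{v_0}}/2$ for the parent $v'$ of $v_0$ to verify Definition \ref{admissible closure}(3)), and that, with $b_{e_{v_0}}(\bfb_{\calf})$ equal by construction to $b_0$ of the bounds on the other sides of $P_{v_0}$ and $b_0$ increasing (Lemma \ref{b_0 deriv}), the hypotheses relative to the enlarged bound tuple $\bfb_{\calf'}$ hold, while $B_T(\bfb_{\calf})=D_0(P_{v_0}(\bfb_{\cale},\bfb_{\calf}))+B_{T'}(\bfb_{\calf'})$. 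Since $D_0(P_{v_0}(\bd))\geq D_0(P_{v_0}(\bfb_{\cale},\bfb_{\calf}))$ by monotonicity of $D_0$ in the non-diameter sides of a semicyclic polygon, the inductive hypothesis for $T'$ closes this case.

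The second case of Corollary \ref{update}, in which $P_{v_T}(\bd)\in\calBC_{n_T}$ and the maximal equal-circumradius subtree $T_0\ni v_T$ may be nontrivial, is where I expect the real work. Here I would first peel off, exactly as above, every vertex outside $T_0$ that is not adjacent to $T_0$ (these are semicyclic by Corollary \ref{update}), and then use Lemma \ref{many polys} to collapse $T_0$: a cyclic polygon $Q$ with side-length collection $\bd_{\calf_{T_0}}$ is tiled by the $P_v$, $v\in T_0^{(0)}$, with $D_0(Q)=\sum_{v\in T_0^{(0)}}D_0(P_v(\bd))$, and $Q\in\calBC$ with diameter the side dual to $e_T$. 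This presents $D_T(\bd)$ as $D_0(Q)$ plus a sum over the leaves remaining off $T_0$, structurally a one-level-smaller instance with $Q$ as root polygon, to which the same monotonicity estimates apply — now also comparing $\partial D_0(Q)/\partial d_{e_w}$ against $\partial D_0(P_w)/\partial d_{e_w}$ for a non-semicyclic child $w$ of $T_0$, where $J(Q)>J(P_w)$ forces the inequality in the direction one needs.

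The obstacles, then, are all in this last case: verifying that the peeling and the collapse keep the configuration inside the relevant admissible closure and preserve minimality at each stage; handling the subcase $e_T\in\cale$, in which $Q$'s diameter is an internal edge and one must invoke the extra conclusion of Corollary \ref{update} that no vertex of $v_T'-1$ is non-semicyclic; and, most delicately, checking that after the collapse the role of the distinguished edge $e_0$ and the values $b_e$ on the edges from $T_0$ to its children are bookkept so that the accumulated lower bound reassembles to exactly $B_T(\bfb_{\calf})$ — possibly after first reformulating how $B_T$ decomposes along $T_0$. The remaining, routine work is the repeated use of the area-monotonicity formula from \cite[Prop.~2.3]{DeB_cyclic_geom} and of the increasing property of $b_0$ from Lemma \ref{b_0 deriv}.
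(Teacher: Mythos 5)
Your base case and your treatment of the first alternative of Corollary \ref{update} (all $P_v(\bd)\in\calBC_{n_v}$ for $v\neq v_T$) are essentially sound --- peeling a semicyclic leaf and inducting is a workable substitute for the paper's direct argument (Lemma \ref{case 1}), since $d_{e_{v_0}}=b_0(\cdots)\geq b_{e_{v_0}}(\bfb_{\calf})$ and $B_T$ visibly splits off $D_0(P_{v_0}(\bfb_{\cale},\bfb_{\calf}))$. But the second alternative, which you correctly identify as ``the real work,'' is genuinely gapped, and the gap traces to a structural choice made at the very start: you fix $\bd_{\calf}$ and minimize only over $\bd_{\cale}$. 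The paper instead performs a \emph{double} minimization, first showing (via \cite[L.~3.29]{DeB_Voronoi}) that $\bd_{\calf}\mapsto\min\{D_T(\bd)\}$ attains a minimum on the closed set $\mathit{SAd}_T(\bfb_{\calf})\cap[0,D]^n$ of frontier tuples satisfying the lower bounds. This is what kills the subcase $e_T\in\calf$: one shows $d_{e_T}$ strictly exceeds its lower bound ($b_{e_T}$, or $m_{e_0}$ if $e_T=e_0$, using Lemma \ref{b_0 deriv} to propagate strict inequalities up the tree from a non-semicyclic vertex) and then perturbs the \emph{frontier} length $d_{e_T}$ downward to decrease $D_T$ while staying in $\mathit{SAd}_T(\bfb_{\calf})$, contradicting minimality. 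With $\bd_{\calf}$ frozen, as in your setup, no such contradiction is available and you must bound this configuration directly.

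Your proposed direct bound --- collapse $T_0$ to a single cyclic polygon $Q$ via Lemma \ref{many polys} and compare --- does not close. The target $B_T(\bfb_{\calf})$ is a sum $\sum_v D_0(P_v(\bfb_{\cale},\bfb_{\calf}))$ of areas of polygons whose circumradii are in general \emph{unequal} (each non-root $P_v(\bfb)$ is semicyclic with its own diameter $b_{e_v}$), so they do not tile any single cyclic polygon, and there is no monotonicity statement comparing $D_0(Q)$ to the corresponding partial sum of $B_T(\bfb_{\calf})$; this is precisely the ``reassembly'' you flag as unresolved, and it is not a bookkeeping issue but the absence of an argument. The paper's actual mechanism for the remaining subcase $e_T\in\cale$ is different: split $T$ along $e_T$ into $T'$ (rooted at $v_T'$) and $T''$ (rooted at $v_T$), assign $e_T$ the bound $b_{e_T}(\bfb_{\calf})$ on the $T'$ side and the bound $b''_{e_T}=b_0(b_{e_0},\hdots,\widehat{b_{e_{i_0}}},\hdots,b_{e_{n_T-1}})$ on the $T''$ side, verify $d_{e_T}\geq\max\{b_{e_T},b''_{e_T}\}$ (the latter from $P_{v_T}(\bd)\in\calBC_{n_T}$ and monotonicity of $b_0$), prove $B_{T'}+B_{T''}\geq B_T$ by a single application of \cite[Cor.~2.4]{DeB_cyclic_geom} at the root polygon, and apply the inductive hypothesis to each piece. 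Neither the double minimization nor this splitting appears in your proposal, so the second case remains unproved.
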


If $e_0\notin\calf$ then the requirement above on $\bd_{\calf}$ simply becomes that $d_f\geq b_f$ for all $f\in\calf$.  In the case that $e_0\in\calf$ we note that the given bound is \textit{a priori} stronger than one which holds for all $\bd_{\calf}$ with $d_f\geq b_f$ for all $f\in\calf$, since $m_{e_0}\leq b_{e_0}$.

The proof proceeds by separately considering the possibilities described in Corollary \ref{update} for minimizers of the function $\bd\mapsto D_T(\bd)$.  One is handled by the lemma below.

\begin{lemma}\label{case 1} Suppose $T\subset V$ is a rooted tree with root vertex $v_T$, edge set $\cale$, and frontier $\calf$, and fix $\bfb_{\calf}\in(\mathbb{R}^+)^{\calf}$.  Suppose for $\bd_{\calf}\in(\mathbb{R}^+)^{\calf}$ and $\bd = (\bd_{\cale},\bd_{\calf})\in\overline{\mathit{Ad}}(\bd_{\calf})$, that\begin{enumerate}
   \item $P_v(\bd)\in\calBC_{n_v}$ for each $v\in T^{(0)}-\{v_T\}$; and
   \item $d_{f} \geq b_{f}$ for each $f\in\calf-\{e_0\}$, and $d_{e_0}\geq m_{e_0}$, for $e_0$ and $m_{e_0}$ as in Proposition \ref{centered lower}.\end{enumerate}
Then $D_T(\bd) \geq B_{T}(\bfb_{\calf})$, for $B_T$ as defined in Proposition \ref{centered lower}.\end{lemma}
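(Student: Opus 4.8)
The plan is to push $\bd$ downward, through elementary moves that do not increase $D_T$, to the smallest tuple compatible with the lower bounds and hypothesis (1), whose $D_T$-value is at least $B_T(\bfb_\calf)$. First I would use hypothesis (1) to see that $d_e\geq b_e$ for every $e\in\cale$. Since every frontier edge meets $T$ only at $v_T$, no frontier edge at a vertex $v\neq v_T$ equals $e_0$, so hypothesis (2) gives $d_f\geq b_f$ for all such edges; and for $v\neq v_T$, hypothesis (1) together with Definition \ref{admissible closure}(1) and \cite[Prop.~2.2]{DeB_cyclic_geom} shows $d_{e_v}$ is the diameter of the semicyclic polygon $P_v(\bd)$, hence equals $b_0$ of the remaining entries of $P_v(\bd)$. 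Inducting on $\mathrm{dist}(v,v_T)$ outward from the leaves of $T$ (where the remaining edges at $v$ are all frontier edges), using that $b_0$ is strictly increasing in each argument (\cite[Prop.~1.12]{DeB_cyclic_geom}, cf.~Lemma \ref{b_0 deriv}) and the recursive description of $b_e$ from \cite[Lemma 3.19]{DeB_Voronoi}, then gives $d_e\geq b_e$ for all $e\in\cale$.

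Next I would introduce $\bd^\ast$: set $d^\ast_f=b_f$ for $f\in\calf-\{e_0\}$, set $d^\ast_{e_0}=m_{e_0}$ if $e_0\in\calf$, and choose $\bd^\ast_\cale$ making each $P_v(\bd^\ast)$, $v\neq v_T$, semicyclic with diameter $d^\ast_{e_v}$. The same recursion gives $d^\ast_e=b_e$ for $e\in\cale$, so $P_v(\bd^\ast)=P_v(\bfb_\cale,\bfb_\calf)$ for $v\neq v_T$, and $\bd^\ast\leq\bd$ coordinatewise by the previous step, by hypothesis (2), and (when $e_0\in\cale$) by the bound $m_{e_0}\leq b_{e_0}$. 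Hence $D_T(\bd^\ast)=D_0(P_{v_T}(\bd^\ast))+\sum_{v\neq v_T}D_0(P_v(\bfb_\cale,\bfb_\calf))$, which equals $B_T(\bfb_\calf)$ outright when $e_0\in\calf$; when $e_0\in\cale$ one has $P_{v_T}(\bd^\ast)=(b_{e_0},b_{e_1},\ldots,b_{e_{n_T-1}})$ with $b_{e_0}\geq m_{e_0}$, and since $(m_{e_0},b_{e_1},\ldots,b_{e_{n_T-1}})$ is semicyclic while $(b_{e_0},b_{e_1},\ldots,b_{e_{n_T-1}})\in\calc_{n_T}$ when $m_{e_0}<b_{e_0}$, \cite[Prop.~2.3,~1.14]{DeB_cyclic_geom} give $D_0(P_{v_T}(\bd^\ast))\geq D_0(m_{e_0},b_{e_1},\ldots,b_{e_{n_T-1}})$, so $D_T(\bd^\ast)\geq B_T(\bfb_\calf)$.

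It then remains to show $D_T(\bd)\geq D_T(\bd^\ast)$, which I would do vertex by vertex. For $v\neq v_T$, both $P_v(\bd)$ and $P_v(\bd^\ast)$ lie in $\calBC_{n_v}$ with the $e_v$-entry the diameter, and $P_v(\bd^\ast)\leq P_v(\bd)$; raising the non-diameter entries monotonically within $\calBC_{n_v}$ (recomputing the diameter entry by $b_0$) and using that $\partial D_0/\partial d_i=\sqrt{\cosh^{-2}(d_i/2)-\cosh^{-2}J}$ (\cite[Prop.~2.3]{DeB_cyclic_geom}) vanishes at the diameter of a semicyclic polygon and is positive otherwise shows $D_0(P_v(\bd))\geq D_0(P_v(\bd^\ast))$. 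For $v_T$ one needs $D_0(P_{v_T}(\bd))\geq D_0(P_{v_T}(\bd^\ast))$, knowing $P_{v_T}(\bd)\in\calc_{n_T}\cup\calBC_{n_T}$ (Definition \ref{admissible closure}(2)) and $P_{v_T}(\bd)\geq P_{v_T}(\bd^\ast)$; summing the vertex inequalities then yields $D_T(\bd)\geq D_T(\bd^\ast)\geq B_T(\bfb_\calf)$.

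The hard part is this $v_T$ comparison. Unlike at non-root vertices, $D_0$ is not monotone in each coordinate on all of $\calAC_{n_T}$ — by \cite[Prop.~2.3]{DeB_cyclic_geom} its partial derivative with respect to the longest side is negative on $\calAC_{n_T}-(\calc_{n_T}\cup\calBC_{n_T})$ — so one cannot interpolate along a straight segment. The point to exploit is that the target $P_{v_T}(\bd)$ already lies in $\calc_{n_T}\cup\calBC_{n_T}$: raising the entries of $P_{v_T}(\bd^\ast)$ to those of $P_{v_T}(\bd)$ by increasing the non-maximal entries first drives the configuration into $\calc_{n_T}\cup\calBC_{n_T}$ before any long-side derivative can turn negative, so every partial derivative of $D_0$ encountered is $\geq 0$. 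Equivalently, this inequality is (the centered case of) \cite[Prop.~3.30]{DeB_Voronoi} applied to the $n_T$-gon $P_{v_T}(\bd)$, which one may simply cite.
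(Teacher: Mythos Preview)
Your first step (obtaining $d_e\geq b_e$ for $e\in\cale$) and the non-root vertex comparison are sound and match the paper's argument. But the detour through the intermediate tuple $\bd^\ast$ breaks down when $e_0\in\cale$ and $m_{e_0}<b_{e_0}$.

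In that case you assert $(b_{e_0},b_{e_1},\ldots,b_{e_{n_T-1}})\in\calc_{n_T}$, but the inequality $m_{e_0}<b_{e_0}$ means precisely that $b_{e_0}>b_0(b_{e_1},\ldots,b_{e_{n_T-1}})$, so this tuple lies in $\calAC_{n_T}-(\calc_{n_T}\cup\calBC_{n_T})$, not in $\calc_{n_T}$. The partial $\partial D_0/\partial d_{e_0}$ is then negative, and in fact $D_0(b_{e_0},b_{e_1},\ldots)<D_0(m_{e_0},b_{e_1},\ldots)$. Consequently $D_T(\bd^\ast)<B_T(\bfb_\calf)$ here, and the chain $D_T(\bd)\geq D_T(\bd^\ast)\geq B_T(\bfb_\calf)$ fails at the second link.

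The paper avoids this by comparing $P_{v_T}(\bd)$ directly to $(m_{e_0},b_{e_1},\ldots,b_{e_{n_T-1}})$ rather than to $P_{v_T}(\bd^\ast)$. One has $d_{e_i}\geq b_{e_i}$ for $i\neq 0$ by your first step, and $d_{e_0}\geq m_{e_0}$ either by hypothesis~(2) (if $e_0\in\calf$) or since $d_{e_0}\geq b_{e_0}\geq m_{e_0}$ (if $e_0\in\cale$). Crucially, \emph{both} endpoint tuples lie in $\calc_{n_T}\cup\calBC_{n_T}$---the smaller one by the definition of $m_{e_0}$, and $P_{v_T}(\bd)$ by Definition~\ref{admissible closure}(2)---so \cite[Cor.~2.4]{DeB_cyclic_geom} (which is exactly the monotonicity fact you sketch in your ``hard part'') applies directly. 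No intermediate tuple is needed, and the argument never passes through the non-centered region.

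(A side remark: your sentence ``every frontier edge meets $T$ only at $v_T$'' is false as stated---frontier edges can terminate at any vertex of $T$. What you need, and what is true, is that $e_0$ itself contains $v_T$, so if $e_0\in\calf$ its unique vertex in $T$ is $v_T$; hence no frontier edge at a non-root vertex equals $e_0$.)
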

  
\begin{proof}  For a given $\bd$, hypothesis (1) above and the defining property of the functions $b_e$ from \cite[Lemma 3.19]{DeB_Voronoi} imply for each $e\in\cale$ that $d_e = b_e(\bd_{\calf})$.  The monotonicity property of $b_e$ laid out in assertion (3) of \cite[L.~3.19]{DeB_Voronoi} and the hypothesis that $d_f\geq b_f$ for each $f\in\calf$ thus together imply that $d_e \geq b_e$ for each $e\in\cale$.  Corollary 2.4 of \cite{DeB_cyclic_geom} now directly gives for each $v\in T^{(0)}-\{v_T\}$ that $D_0(P_v(\bfb_{\cale},\bfb_{\calf}))\leq D_0(P_v(\bd))$.  

For $e_0,\hdots,e_{n_T-1}$ as in Proposition \ref{centered lower}, with $m_{e_0}$ as defined there we note that by construction $(m_{e_0},d_{e_1},\hdots,d_{e_{n_T-1}})$ lies in $\calc_{n_T}\cup\calBC_{n_T}$.  Therefore since $m_{e_0}\leq b_{e_0}$ and $P_{v_T}(\bd)\in\calc_{n_T}\cup\calBC_{n_T}$ by Definition \ref{admissible closure}(2), \cite[Cor.~2.4]{DeB_cyclic_geom} also implies that $D_0(m_{e_0},b_{e_1},\hdots,b_{e_{n_T-1}})\leq D_0(P_{v_T}(\bd))$.  The Lemma therefore follows from the definitions of $B_T(\bfb_{\calf})$ and $D_T(\bd_{\calf})$.\end{proof}

\begin{proof}[Proof of Proposition \ref{centered lower}]  For $\bfb_{\calf}\in(\mathbb{R}^+)^{\calf}$ and $T$ as in the Proposition, compute $\bfb_{\cale}$ and $m_{e_0}$ as prescribed there.  Now enumerate $\calf$ as $\{f_1,\hdots,f_n\}$, where $f_n = e_0$ if $e_0\in\calf$.  For the set $\mathit{SAd}_T=\{\bd_{\calf}\in(\mathbb{R}^+)^{\calf}\,|\,\overline{\mathit{Ad}}(\bd_{\calf})\neq\emptyset\}$ defined in Lemma 3.29 of \cite{DeB_Voronoi}, that result implies that the set below is closed in $\mathbb{R}^n$.
$$ \mathit{SAd}_T(\bfb_{\calf}) = \mathit{SAd}_T \cap \{(d_1,\hdots,d_n)\in\mathbb{R}^n\,|\,d_i\geq b_{f_i}\ \mbox{for each}\ i<n,\ \mbox{and}\ d_n\geq m_{e_0}\ \mbox{or}\ b_{f_n}\} $$
Here the inequality $d_n\geq m_{e_0}$ applies if $e_0\in \calf$, hence $f_n = e_0$; otherwise we require $d_n\geq b_{f_n}$.  

Since $\mathit{SAd}_T(\bfb_{\calf})$ is closed its intersection with $[0,D]^n$ is compact for any fixed $D>0$.  Let $D$ be large enough that $\mathit{SAd}_T(\bfb_{\calf})\cap [0,D]^n$ is non-empty.  (It is not hard to show that such a $D$ exists.)  Then \cite[L.~3.29]{DeB_Voronoi} further implies that the function $\bd_{\calf}\mapsto \min\{D_T(\bd)\,|\,\bd\in\overline{\mathit{Ad}}_T(\bd_{\calf})\}$ attains a minimum on it, since it asserts that this function is lower-semicontinuous on $\mathit{SAd}_T$.

Let $\bd_{\calf}$ be a minimizer for $\min\{D_T(\bd)\,|\,\bd\in\overline{\mathit{Ad}}(\bd_{\calf})\}$ on $\mathit{SAd}_T(\bfb_{\calf})\cap[0,D]^n$, and $\bd$ a minimizer for $D_T(\bd)$ on $\overline{\mathit{Ad}}(\bd_{\calf})$.  We apply Corollary \ref{update} to $\bd$, separately treating the different cases it describes.  If $P_v(\bd)\in\calBC_{n_v}$ for all $v\in T^{(0)}-\{v_T\}$ then Lemma \ref{case 1} directly implies the desired bound.  So we will assume now that $P_v(\bd)\notin\calBC_{n_v}$ for some $v\in T^{(0)}-\{v_T\}$, and therefore by Cor.~\ref{update} that $P_{v_T}(\bd)\in\calBC_{n_T}$, where $v_T$ has valence $n_T$ as in the Corollary.

Let $e_T$ be the edge containing $v_T$ such that $d_{e_T}$ is maximal among all such edges.  We first suppose that $e_T\in\calf$.  In this case we will deform $\bd_{\calf}$ within $\mathit{SAd}_T(\bfb_{\calf})\cap [0,D]^n$ to reduce $D_T(\bd)$, thereby contradicting our minimality hypothesis.  The idea is to reduce $d_{e_T}$ without changing any other entry of $\bd_{\calf}$, so we first show there is room to do this in $\mathit{SAd}_T(\bfb_{\calf})$.  With the edges containing $v_T$ numbered $e_0,\hdots,e_{n_T-1}$ so that $b_{e_0}$ is maximal, we have $e_T = e_{i_0}$ for some $i_0$.  We claim that if $i_0\ne 0$ then $d_{e_T}>b_{e_T} = b_{e_{i_0}}$, and if $i_0 = 0$ then $d_{e_T} > m_{e_0}$.

(The strict inequalities are the point here, since the hypothesis that $d_{f_i}\geq b_{f_i}$ for each $f_i\in\calf-\{e_T\}$ implies that $d_e \geq b_e$ for each $e\in\cale$, by assertions (2) and (3) of \cite[L.~3.19]{DeB_Voronoi}.  This is not clear directly from that lemma's statement, since we do not necessarily have $d_{e_T}\geq b_{e_T}$ here, but note that the first sentence of its proof asserts for each $v\in T^{(0)}-\{v_T\}$ that $b_{e_v}(\bd_{\calf})$ is determined by $\bd_{\calf}$ and the collection of $b_{e_w}(\bd_{\calf})$ for $w<v$.  In fact the proof itself shows more precisely that $b_{e_v}(\bd_{\calf})$ is determined by the set of $d_{f_i}$ such that $f_i\in\calf$ contains $v$ or some $w<v$.  Here $e_T$ contains only $v_T$, and of course $v_T\not<v$ for any $v$.)

First suppose that $i_0\neq 0$.  By the above $d_{e_i} \geq b_{e_i}$ for each $i\neq i_0$, and in particular $d_{e_0}\geq b_{e_0}$.  The claim now follows from the fact that since $P_{v_T}(\bd)\in\calBC_{n_T}$,
\[ d_{e_T} = b_0(d_{e_0},\hdots,\widehat{d_{e_{i_0}}},\hdots,d_{e_{n_T-1}}), \]
so recalling \cite[Prop.~1.12]{DeB_cyclic_geom} we have that it is larger than $d_{e_0}\geq b_{e_0}$.

Now suppose that $i_0 = 0$.  In this case we use our assumption that $P_v(\bd)\notin\calBC_{n_v}$ for some $v\in T^{(0)}-\{v_T\}$, which implies for the initial edge $e_v$ of the arc of $T$ joining $v$ to $v_T$ that
\[ d_{e_v} > b_0(d_{e_1'},\hdots,d_{e_{n_v-1}'}) \geq b_{e_v} = b_0(b_{e_1'},\hdots,b_{e_{n_v-1}'}), \]
where $e_1',\hdots,e_{n_v-1}'$ are the other edges of $\cale\cup\calf$ containing $v$.  It now follows from Lemma \ref{b_0 deriv} that strict inequality $d_e>b_e$ holds for each edge $e$ on the arc joining $v$ to $v_T$, using $e_v$ as the base case of an inductive argument.  This holds in particular for one of the edges $e_i$ containing $v_T$, for $i>0$, so since $P_{v_T}(\bd)\in\calBC_{n_T}$ we again obtain from Lemma \ref{b_0 deriv} that
\[ d_{e_T} = b_0(d_{e_1},\hdots,d_{e_{n_T-1}}) > b_0(b_{e_1},\hdots,b_{e_{n_T-1}}). \]
It thus follows that $d_{e_T}>m_{e_0}$, and the claim is proved in this case as well.

We now define $\bd_{\calf}(t)$ by taking $d_{e_T}(t) = d_{e_T}-t$ and $d_{f_i}(t)\equiv d_{f_i}$ for all $f_i\in\calf-\{e_T\}$.  We will next produce a deformation $\bd(t)\in\overline{\mathit{Ad}}(\bd_{\calf}(t))$ of $\bd$ for small $t>0$, from which (together with the claim) it will follow that $\bd_{\calf}(t)$ is a deformation of $\bd_{\calf}$ within $\mathit{SAd}_T(\bfb_{\calf})\cap [0,D]^n$.  We will further observe that  $D_T(\bd(t))$ decreases with $t$, thus obtaining a contradiction to the hypothesis that $\bd_{\calf}$ is a minimizer for $\min\{D_T(\bd)\,|\,\bd\in\overline{\mathit{Ad}}(\bd_{\calf})\}$ on $\mathit{SAd}_T(\bfb_{\calf})\cap[0,D]^n$.

Let $T_0$ be the maximal subtree of $T$ such that $J(P_v(\bd)) = J(P_{v_T}(\bd))$ for all $v\in T_0^{(0)}$.  Define $\bd(t)$ by letting $d_e(t) \equiv d_e$ for each edge $e$ of $T$ that does not lie in $T_0$, and for $e\subset T_0$ let $d_e(t)$ be determined by Lemma \ref{deform many polys}.  The verification that $\bd(t)\in\overline{\mathit{Ad}}(\bd_{\calf}(t))$ parallels the corresponding check for the deformation $\bd(t)$ described in the proof of the third bulleted assertion of Proposition \ref{equal radii}, but it is simpler.  In particular, $T_0$ here plays the role of $T_1$ there, but there is no $T_1'$ or $v_0'$, and here we have $\frac{d}{dt} d_{e_T}(t) \equiv -1$ rather than $\frac{d}{dt} d_{e_T}(t) = - \frac{\partial b_0}{\partial d_{e_0'}}<0$ as there (see above the statement of Lemma \ref{b_0 deriv}).  As was the case with $\bd_{\calf_1}(t)$, we have $\bd_{\calf_0}(t)\in\calc_{n_0}$ for small $t>0$, where $\calf_0$ is the frontier of $T_0$ and $n_0=|\calf_0|$.  We thus obtain
\[ \frac{d}{dt}D_T(\bd(t)) = -\sqrt{\frac{1}{\cosh^2(d_{e_T}(t)/2)} - \frac{1}{\cosh^2 J(\bd_{\calf_0}(t))}} < 0 \]
for small $t>0$.  This is the analog of (\ref{D_T deriv}) in the proof of Proposition \ref{equal radii}, but again it is simpler since there is no $T_1'$ or $v_0'$, and here $e_T\in\calf$ instead.  It implies that $D_T(\bd(t))$ is decreasing as asserted, finishing the case that $P_{v_T}(\bd)\in\calBC_{n_T}$ and $e_T\in\calf$.

We finally address the case that $P_{v_T}(\bd)\in\calBC_{n_T}$ and $e_T\in\cale$.  Here we will argue by induction on the number of edges of $T$, ie.~$|\cale|$.  The base case $T = \{v_T\}$, with $\cale = \emptyset$, follows directly from Lemma \ref{case 1}, since criterion (1) there holds vacuously (here recall Remark \ref{too good}.)  So assume now that $T$ has $k\geq 1$ edges, that the Proposition holds for all trees with fewer than $k$ edges, and that $P_{v_T}(\bd)\in\calBC_{n_T}$ with $e_T\in\cale$.

Let $T'$ be the maximal subtree containing the other endpoint $v_T'$ of $e_T$ but not $v_T$, and take $T'' = T-(T'\cup\mathit{int}(e_T))$.  We take $v_T'$ as the root vertex of $T'$ and $v_T$ as the root vertex of $T''$.  Naming the frontiers of $T'$ and $T''$ as $\calf'$ and $\calf''$, respectively, we have $\calf'\cap\calf'' = \{e_T\}$ and $\calf'\cup\calf'' = \calf\cup\{e_T\}$.  Similarly taking their edge sets to be $\cale'$ and $\cale''$, we have $\cale'\cap\cale'' = \emptyset$ and $\cale'\cup\cale''\cup\{e_T\}=\cale$.  Thus by the induction hypothesis the Proposition holds for $T'$ and $T''$.

Let $\bd' = (\bd'_{\cale'},\bd'_{\calf'})$ and $\bd'' = (\bd''_{\cale''},\bd''_{\calf''})$ take their entries from $\bd$, so in particular $d_{e_T}' = d_{e_T}'' = d_{e_T}$.  Then for any vertex $v$ of $T'$, $P_v(\bd') = P_v(\bd)$, and similarly if $v\in T''$.  Therefore $\bd'$ lies in $\overline{\mathit{Ad}}(\bd'_{\calf'})$, and $\bd''\in\overline{\mathit{Ad}}(\bd''_{\calf''})$: criteria (1) - (3) of Definition \ref{admissible closure} are directly inherited by $T''$ from $T$, and for $T'$ we merely note in addition that $P_{v_T'}(\bd')\in\calBC_{n_T'}$ by hypothesis, and $J(P_{v_T'}(\bd')) = J(P_{v_T}(\bd)) = d_{e_T}/2$.  Note that $D_T(\bd) = D_{T'}(\bd')+D_{T''}(\bd'')$, since each vertex of $T$ lies in exactly one of $T'$ or $T''$.

Define $\bfb'_{\calf'}$ by taking $b_{e_T}' = b_{e_T} = b_{e_T}(\bfb_{\calf})$ and pulling the remaining entries from $\bfb_{\calf}$, and define $\bfb''_{\calf''}$ by taking all entries but $b''_{e_T}$ from $\bfb_{\calf}$.  To obtain $b''_{e_T}$ we enumerate the edges containing $v_T$ as $e_0,\hdots,e_{n_T-1}$ as described in the Proposition, for each $i$ let $b_{e_i}$ be the appropriate entry of $\bfb_{\calf}$ or $\bfb_{\cale}$, and for $i_0$ such that $e_T = e_{i_0}$ we take
$$b''_{e_T} = b_0(b_{e_0},\hdots,\widehat{b_{e_{i_0}}},\hdots,b_{e_{n_T}-1})$$
We will establish this case of the Proposition with two claims: first, that $B_{T'}(\bd'_{\calf'})+B_{T''}(\bd''_{\calf''}) \geq B_T(\bd_{\calf})$, and second, that $d'_e \geq b'_e$ for all $e\in\calf'$ and $d''_e\geq b''_e$ for all $e\in\calf''$.  Applying induction and the observation that $D_T(\bd) = D_{T'}(\bd') + D_{T''}(\bd'')$, we therefore conclude the desired bound $D_T(\bd)\geq B_T(\bfb_{\calf})$.

Toward the first claim, computing straight from the definitions gives:
$$ B_{T'}(\bd'_{\calf'})+B_{T''}(\bd''_{\calf''}) - B_T(\bd_{\calf}) = D_0(b_{e_0},\hdots,b''_{e_T},\hdots,b_{e_{n_T-1}}) - D_0(m_{e_0},b_{e_1},\hdots,b_{e_{n_T-1}}) $$
The entries from the two inputs to $D_0$ on the right-hand side above differ only in the $e_0$ and $e_{i_0}$ positions.  If $i_0 = 0$ then it follows directly from their definitions that $m_{e_0} \leq b''_{e_T}$.  Otherwise, by its definition in the Proposition we have $m_{e_0}\leq b_{e_0}$.  And since $b_{e_0}$ is maximal among the $b_{e_i}$ by definition, $b_{e_T}'' > b_{e_0} \geq b_{e_{i_0}}$ by its definition and Proposition 1.12 of \cite{DeB_cyclic_geom}.  In either case the difference above is positive by \cite[Cor.~2.4]{DeB_cyclic_geom}, yielding the first claim.

For the second claim we note first that by hypothesis $d_e \geq b_e$ for each $e\in\calf$, so by definition $d_e' = d_e \geq b_e$ for all $e\in\calf'-\{e_T\}$, and similarly for $e\in\calf''-\{e_T\}$.  Applying Lemma 3.19 of \cite{DeB_Voronoi} gives 
$$d_e \geq b_e(\bd_{\calf}) \geq b_e(\bfb_{\calf}) = b_e$$
for each $e\in\cale$.  (The first and second inequalities above are respectively implied by assertions (2) and (3) there.)  It now follows immediately that $d_{e_T}' = d_{e_T} \geq b_{e_T}' = b_{e_T}$, so the claim is proved for $\bd_{\calf'}$.  Since $P_{v_T}(\bd)\in\calc_{n_T}$, Proposition 1.12 of \cite{DeB_cyclic_geom} implies that
$$ d_{e_T} = b_0(d_{e_0},\hdots,\widehat{d_{e_i}},\hdots,d_{e_{n_T-1}}) \geq b_0(b_{e_0},\hdots,\widehat{b_{e_i}},\hdots,b_{e_{n_T-1}}) $$
But the latter quantity is $b_{e_T}''$, and since $d_{e_T}'' = d_{e_T}$ the second claim is also proved for $\calf''$.\end{proof}

\begin{lemma}\label{trivalent}  Suppose $T\subset V$ is a compact, rooted tree with root vertex $v_T$, edge set $\cale$ and frontier $\calf$, where each vertex of $T$ has valence at least three in $V$.  There is a compact, rooted tree $T_0\subset V_0$ with root vertex $v_{T_0}$, edge set $\cale_0$ and frontier $\calf_0$, where each vertex of $T_0$ is trivalent in $V_0$, with the following property.

There is a bijection $q\co\calf\to\calf_0$ such that for any $\bd_{\calf}\in(\mathbb{R}^+)^{\calf}$, the tuple $\bd_{\calf_0}\in(\mathbb{R}^+)^{\calf_0}$ given by relabeling entries of $\bd_{\calf}$ using $q$ has the property that:
$$\min\{D_T(\bd)\,|\,\bd\in\overline{\mathit{Ad}}(\bd_{\calf})\} \geq \min\{D_{T_0}(\bd)\,|\,\bd\in\overline{\mathit{Ad}}(\bd_{\calf_0})\}$$\end{lemma}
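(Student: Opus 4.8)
The plan is to argue by induction on the \emph{excess valence} $\sum_{v\in T^{(0)}}(n_v-3)$ of $T$, where $n_v$ is the valence of $v$ in $V$. If this vanishes, $T$ is already trivalent in $V$ and we take $T_0=T$, $q=\mathrm{id}$. Otherwise fix a vertex $v$ of $T$ with $n_v\geq 4$ and list the edges of $\cale\cup\calf$ containing $v$ in the cyclic order in which they appear around a cyclic $n_v$-gon: if $v\neq v_T$ these are $e_v,g_1,\hdots,g_{n_v-1}$, with $e_v$ the initial edge of the arc in $T$ to $v_T$; if $v=v_T$ there is no distinguished edge and we call them $g_1,\hdots,g_{n_T}$. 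In either case I split $v$ into a trivalent vertex $v_2$ carrying the ``ear'' $\{g_1,g_2,e^\ast\}$, where $e^\ast$ is a new internal edge, and a vertex $v_1$ of valence $n_v-1$ carrying $e^\ast$ together with the remaining edges formerly at $v$; descendants of $v$ through $g_1$ or $g_2$ become descendants of $v_2$, the rest descendants of $v_1$. This produces a tree $T'\subset V'$ with one more internal edge, the same frontier $\calf$ (so we may take the bijection to be the identity at this stage), and excess valence one less than that of $T$. Iterating and composing, it suffices to prove, for this single splitting, that $\min\{D_T(\bd)\,|\,\bd\in\overline{\mathit{Ad}}(\bd_{\calf})\}\geq\min\{D_{T'}(\bd')\,|\,\bd'\in\overline{\mathit{Ad}}(\bd_{\calf})\}$ for all $\bd_{\calf}$; by induction $T'$ then has a trivalent model and we are done.

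The comparison of minima reduces to a measure-preserving inclusion. The key claim is that for each $\bd_{\calf}\in(\mathbb{R}^+)^{\calf}$ there is a map $\bd\mapsto\bd'$ carrying $\overline{\mathit{Ad}}_T(\bd_{\calf})$ into $\overline{\mathit{Ad}}_{T'}(\bd_{\calf})$ with $D_{T'}(\bd')=D_T(\bd)$; then $\{D_T(\bd)\,|\,\bd\in\overline{\mathit{Ad}}_T(\bd_{\calf})\}$ is a subset of $\{D_{T'}(\bd')\,|\,\bd'\in\overline{\mathit{Ad}}_{T'}(\bd_{\calf})\}$, giving the inequality (and the case $\overline{\mathit{Ad}}_T(\bd_{\calf})=\emptyset$ is vacuous). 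To define $\bd'$ I keep every entry of $\bd$ and set $d_{e^\ast}$ equal to the length of the diagonal of the cyclic $n_v$-gon $P_v(\bd)$ that cuts off the ear on $g_1,g_2$; this is a well-defined function of $P_v(\bd)$ through the diagonal-length functions $\ell_{ij}$ of \cite[Cor.~1.15]{DeB_cyclic_geom}, and it splits $P_v(\bd)$ into cyclic polygons $P_{v_1}(\bd')$ and $P_{v_2}(\bd')$ sharing its circumcircle. Since area is additive under this cut and no other vertex polygon changes, $D_{T'}(\bd')=D_T(\bd)$.

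It remains to check $\bd'\in\overline{\mathit{Ad}}_{T'}(\bd_{\calf})$, i.e.\ conditions (1)--(3) of Definition \ref{admissible closure} for $T'$. Condition (3) is immediate: the three new circumcircle radii all equal $J(P_v(\bd))$, and all other radii and the parent--child structure are unchanged, so each inequality of Definition \ref{admissible closure}(3) involving a new vertex holds with equality or is inherited. For (1) and (2) the point is that the shared circumcenter of $P_{v_1}(\bd')$ and $P_{v_2}(\bd')$ lies in the closed half-plane bounded by the line through $e^\ast$ that contains $P_{v_1}(\bd')$, since the ear $P_{v_2}(\bd')$ and the complement $P_{v_1}(\bd')$ occupy opposite closed half-planes there; moreover, when $v\neq v_T$ the circumcenter lies strictly beyond the longest side $e_v$ of $P_v(\bd)$, which is a side of $P_{v_1}(\bd')$. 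When $v\neq v_T$: Proposition 2.2 of \cite{DeB_cyclic_geom} identifies $e_v$ as the unique longest side of $P_{v_1}(\bd')\in\calAC_{n_v-1}-\calc_{n_v-1}$ and $e^\ast$ as the unique longest side of $P_{v_2}(\bd')\in\calAC_3-\calc_3$, so (1) holds for $v_1$ and $v_2$; deeper vertices are untouched and the root polygon is unchanged, so (1) and (2) are inherited. When $v=v_T$: I take as the root $v_{T'}$ whichever of $v_1,v_2$ has the circumcenter of $P_{v_T}(\bd)$ in its closure (either, if it lies on $e^\ast$); Propositions 1.11 and 2.2 of \cite{DeB_cyclic_geom} put that polygon in $\calc\cup\calBC$, giving (2), while the other piece has the shared edge $e^\ast$ as its unique longest side --- using \cite[Prop.~2.2]{DeB_cyclic_geom}, together with \cite[Prop.~1.11]{DeB_cyclic_geom} in the semicyclic case --- so that (1) holds for it with parent edge $e^\ast$; the remaining vertices, whose polygons and parent edges are unchanged, still satisfy (1). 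This establishes the claim, hence the inductive step.

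The main obstacle is the verification of Definition \ref{admissible closure}(1)--(2) for the two pieces, especially in the root case, where one must check that the location of the circumcenter relative to the cutting diagonal permits a consistent choice of root and pins down the longest side of the non-root piece; this rests entirely on the dictionary between circumcenter position and side lengths for general convex cyclic, centered, and semicyclic polygons supplied by \cite{DeB_cyclic_geom} (Propositions 1.5, 1.11, and 2.2 there). A minor point to dispose of is the degenerate possibility that the circumcenter is a vertex of $P_v(\bd)$, which cannot occur for a nondegenerate cyclic polygon.
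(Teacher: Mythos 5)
Your proposal is correct and follows essentially the same route as the paper: induct on the defect from trivalence, split a vertex of valence at least four by inserting a new internal edge whose length is the appropriate diagonal-length function $\ell_{ij}$ of $P_v(\bd)$, verify Definition \ref{admissible closure} for the two pieces via \cite[Prop.~2.2]{DeB_cyclic_geom} (choosing the new root by the circumcenter's location when $v=v_T$), and conclude from area additivity under the cut. The only cosmetic differences are that you place the parent edge $e_v$ with the higher-valence piece rather than with the trivalent ear, and you argue $D_{T'}(\bd')=D_T(\bd)$ directly from the tiling instead of citing \cite[Lemma 3.28]{DeB_Voronoi}.
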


\begin{proof}  Suppose $T$ has $k$ vertices.  Each vertex of $T$ has valence at least three in $V$, so $3k\leq 2|\cale|+|\calf|$, with equality holding if and only if each vertex of $T$ is trivalent in $V$.  Since $T$ is a tree its Euler characteristic is $1$, so we also have $|\cale| = k-1$.  Substituting this into the first inequality gives $k\leq |\calf| - 2$, with equality if and only if each vertex of $T$ is trivalent in $V$.  We will prove the Lemma by fixing $n = |\calf|\geq 3$ and inducting on $n - k$.  The base case $n-k = 2$ holds trivially with $T_0 = T$ and $q$ the identity map.

Let us now take $k < n-2$ and suppose the Lemma holds for all trees with frontier of order $n$ and more than $k$ vertices.  If $T\subset V$ is a compact, rooted tree with root vertex $v_T$, edge set $\cale$ of order $k$, and frontier $\calf$ of order $n$ such that each vertex of $T$ has valence at least three in $V$ then by the first paragraph there is a vertex $v$ of $T$ with valence at least four in $V$.  List the edges in $\cale\cup\calf$ containing $v$ as $e_0,\hdots,e_{n_v-1}$, where $n_v$ is the valence of $v$ in $V$, and if $v\neq v_T$ then $e_0 = e_v$ is the initial edge of the arc in $T$ joining $v$ to $v_T$.

Let $T_1$ be the maximal subtree of $T$ containing $e_0$ and $e_1$ but not $e_i$ for $i>1$, and let $T_2$ be the maximal subtree containing the remaining $e_i$ but not $e_0$ or $e_1$.  Then $T = T_1\cup T_2$ and $T_1\cap T_2 = \{v\}$.  We produce a tree $T'$ with $k+1$ vertices by joining a copy of $T_1$ to a copy of $T_2$ by an edge $e'$ that has its endpoints at the respective copies of $v$ in $T_1$ and $T_2$.  We produce $V'$ containing $T'$ similarly, by doubling $v$ and joining the resulting copies by $e'$.

There is a quotient map $p_{e'}\co V'\to V$ that identifies $e$ to a point and takes $T'$ to $T$.  It induces a bijection from $\cale'-\{e'\}$ to $\cale$, where $\cale'$ is the edge set of $T'$, and from the frontier $\calf'$ of $T'$ in $V'$ to $\calf$.  We will refer by $q'$ to refer to the inverse bijections both $\calf\to\calf'$ and $\cale\to\cale'-\{e'\}$.  Given $\bd_{\calf} = (d_f\,|\,f\in\calf)\in(\mathbb{R}^+)^{\calf}$ one produces $\bd'_{\calf'}\in(\mathbb{R}^+)^{\calf}$ by relabeling: $\bd'_{\calf'} = (d_{q(f)}\,|\,f\in\calf)$.  Similarly, given $\bd_{\cale}\in(\mathbb{R}^+)^{\cale}$, relabeling gives all entries of an element $\bd'_{\cale'}\in(\mathbb{R}^+)^{\cale'}$ but one, $d_{e'}$.  If $\bd = (\bd_{\cale},\bd_{\calf})\in\overline{\mathit{Ad}}(\bd_{\calf})$, we will choose $d_{e'}$ and a root vertex $v_{T'}$ for $T'$ so that the resulting element $\bd' = (\bd'_{\cale'},\bd'_{\calf'})$ lies in $\overline{\mathit{Ad}}(\bd'_{\calf'})$.

With the edges of $T$ containing $v$ enumerated as above let $d_i = d_{e_i}$ for each $i<n_v$, and define $d_{e'} = \ell_{n_v-1,1}(d_0,\hdots,d_{n_v-1})$, where $\ell_{i,j}$ is the diagonal-length function described in Corollary 1.15 of \cite{DeB_cyclic_geom}.  By that result, $d_{e'}$ is the length of the diagonal $\gamma$ of a cyclic $n_v$-gon $C_v$ with side length collection $(d_0,\hdots,d_{n_v-1})$ that cuts off the sides with lengths $d_0$ and $d_1$ from the others.  We now take $\bd'_{\cale'}$ as suggested in the previous paragraph, with $d_{q(e)} = d_e$ for each in $\cale$ and $d_{e'}$ as given here.

Now we assign $T'$ a root vertex $v_{T'}$.  Let $v_1$ be the copy of $v$ that lies in $T_1\subset T'$, and let $v_2$ be the other copy of $v$ in $T'$.  If $v\neq v_T$ we let $v_{T'} = p_{e'}^{-1}(v_T)$, a vertex of $T'$ since $p_{e'}$ is injective away from $e'$.  Now suppose $v = v_T$.  If the circumcircle center of $C_v$ lies on the side of $\gamma$ containing the edges of length $d_0$ and $d_1$ then we let $v_1 = v_{T'}$; otherwise we let $v_2 = v_{T'}$.

For $\bd' = (\bd'_{\cale'},\bd'_{\calf'})$ as prescribed above, we claim that $\bd'\in\overline{\mathit{Ad}}(\bd'_{\calf'})$.  In the case that $v\neq v_T$, condition (2) of Definition \ref{admissible closure} follows immediately by construction.  If $v = v_T$ then since $P_v(\bd)\in\calc_{n_T}\cup\calBC_{n_T}$, Proposition 2.2 of \cite{DeB_cyclic_geom} implies that the cyclic $n_v$-gon $C_v$ described above contains its circumcircle center $c$.  The diagonal $\gamma$ above divides $C_v$ into cyclic $n$-gons $C_{v_1}$ and $C_{v_2}$ with respective side length collections $P_{v_1}(\bd')$ and $P_{v_2}(\bd')$, where $P_{v_1}(\bd') = (d_0,d_1,d_{e'})$ and $P_{v_2}(\bd') = (d_{e'},d_2,\hdots,d_{n_v-1})$.  We chose to label $v_1$ or $v_2$ as $v_{T'}$ according to which of $C_{v_1}$ or $C_{v_2}$ contains $c$, so again by \cite[Prop.~2.2]{DeB_cyclic_geom} we have $P_{v_{T'}}(\bd')\in\calc_{n_{T'}}\cup\calBC_{n_{T'}}$.

By construction $J(P_{v_1}(\bd')) = J(P_{v_2}(\bd')) = J(P_v(\bd))$, and condition (3) of Definition \ref{admissible closure} follows.  Condition (1) of Definition \ref{admissible closure} follows for any vertex $v'$ of $T'$ outside $e'$ from the fact that $P_{v'}(\bd') = P_{p_{e'}(v')}(\bd)$.  We now check it for $v_1$ and $v_2$.

If $v\neq v_T$ then $P_v(\bd) = (d_0,\hdots,d_{n_v-1})\in\calAC_{n_v}-\calc_{n_v}$ has largest entry $d_0$, since we enumerated the edges containing $v$ as $e_0,\hdots,e_{n_v-1}$ so that $e_0 = e_v$.  Then by \cite[Prop. 2.2]{DeB_cyclic_geom} the side of $C_v$ with length $d_0$ separates $C_v$ from its circumcircle center $c$.  It therefore also separates $C_{v_1}$ from $c$, and $\gamma$ separates $C_{v_2}$ from $c$, which is their shared circumcircle center.  Thus again by \cite[Prop.~2.2]{DeB_cyclic_geom}, $P_{v_1}(\bd')\in\calAC_3-\calc_3$ has largest entry $d_0$ and $P_{v_2}(\bd')\in\calAC_{n_v-2}-\calc_{n_v-2}$ has largest entry $d_{e'}$.  Since $e_v$ is the initial edge of the arc joining $v$ to $v_T$ in $T$, this arc lies in $T_1$ and joins $v_1$ to $v_{T'} = v_T$ in $T_1\subset T'$.  Its initial edge is still $e_v$, and hence $e'$ is the initial edge of the arc in $T'$ joining $v_2$ to $v_{T'}$.

If $v = v_T$ and $v_2 = v_{T'}$ then $\gamma$ separates $C_{v_1}$ from $c$, so again Proposition 2.2 of \cite{DeB_cyclic_geom} implies that $d_{e'}$ is the largest entry of $P_{v_1}(\bd')\in\calAC_3-\calc_3$.  Since $e'$ is the arc joining $v_1$ to $v_2 = v_{T'}$, Definition \ref{admissible closure}(1) follows in this case.  The case that $v_1 = v_{T'}$ is completely analogous, and we have proven the claim that $\bd'\in\overline{\mathit{Ad}}(\bd'_{\calf'})$.

Our construction of $T'$ and $\bd'$ has reverse-engineered the hypotheses of Lemma 3.28 of \cite{DeB_Voronoi}, since $J(P_{v_1}(\bd')) = J(P_{v_2}(\bd'))$ and $T$ is obtained from $T'$ by crushing $e'$ to a point.  In the notation of that result and Definition 3.26 there, $T = T'_{e'}$ and $\bd = \bd'_{e'}$.  Therefore that Lemma gives $D_T(\bd) = D_{T'}(\bd')$.  Choosing $\bd$ as a minimizer for $D_T$ over $\overline{\mathit{Ad}}(\bd_{\calf})$ gives:
$$\min\{D_T(\bd)\,|\,\bd\in\overline{\mathit{Ad}}(\bd_{\calf})\} \geq \min\{D_{T'}(\bd')\,|\,\bd'\in\overline{\mathit{Ad}}(\bd'_{\calf'})\}$$
Now applying the induction hypothesis to $T'$, which has one more vertex than $T$, we conclude that the Lemma holds for $T$.  Thus by induction the Lemma holds for all trees with frontier of order $n\geq 3$.  But $n$ is arbitrary, so the Lemma holds.\end{proof}

\begin{theorem}\label{upgrade}\Upgrade\end{theorem}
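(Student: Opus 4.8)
The plan is to express $\mathrm{area}(C)$ as $D_T(\bd)$ for an appropriate rooted tree $T$ and $\bd\in\overline{\mathit{Ad}}_T(\bd_{\calf})$, to reduce to a trivalent tree by Lemma \ref{trivalent}, to bound below by Proposition \ref{centered lower}, and finally to minimize over the finitely many trivalent trees and permutations that can arise. First I would unwind the setup recalled in Section \ref{intro} and proved in \cite{DeB_Voronoi}: $C$ is dual to a compact rooted tree $T$ --- either a single Voronoi vertex (the centered case) or a component of the union of non-centered Voronoi edges --- whose frontier $\calf$ has some order $n$, equal to the number of edges of $C$ counted with multiplicity; whose edge lengths are recorded by a tuple $\bd_{\calf}\in(\mathbb{R}^+)^{\calf}$; and for which there is $\bd_{\cale}\in(\mathbb{R}^+)^{\cale}$ with $\bd=(\bd_{\cale},\bd_{\calf})\in\overline{\mathit{Ad}}_T(\bd_{\calf})$ and $\mathrm{area}(C)=D_T(\bd)$, by Definition \ref{admissible closure} and \cite[Lemma 3.14]{DeB_Voronoi}. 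Because Voronoi vertices of $\cals$ lie on at least three Voronoi edges, every vertex of $T$ has valence at least three in $V=T\cup\bigcup_{f\in\calf}f$ --- here one must note that replacing a Voronoi edge that meets $T$ in both its endpoints by two distinct frontier edges, as in Section \ref{notation}, preserves these valences --- and this forces $n\geq 3$.

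Next I would fix an enumeration of $\calf$ and observe that the correspondence between frontier edges of $T$ and edges of $C$ from \cite[Dfn.~2.11]{DeB_Voronoi}, composed with the given enumeration of the edges of $C$, exhibits $\bd_{\calf}$, as an element of $(\mathbb{R}^+)^n$, as entrywise at least $\sigma_0(\bfb)$ for some $\sigma_0\in S_n$. Apply Lemma \ref{trivalent} to $T$: it yields a compact rooted tree $T_0$ with every vertex trivalent in its ambient graph, hence an element of $\calt_n$, together with a bijection $q\co\calf\to\calf_0$ so that, for the tuple $\bd_{\calf_0}$ obtained from $\bd_{\calf}$ by relabeling via $q$,
\[ \mathrm{area}(C)=D_T(\bd)\;\geq\;\min\{D_T(\bd')\,|\,\bd'\in\overline{\mathit{Ad}}(\bd_{\calf})\}\;\geq\;\min\{D_{T_0}(\bd'')\,|\,\bd''\in\overline{\mathit{Ad}}(\bd_{\calf_0})\}, \]
the first inequality holding because $\bd$ itself lies in the set. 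Since $q$ is a bijection, $\bd_{\calf_0}$, read as an element of $(\mathbb{R}^+)^n$ via a fixed enumeration of $\calf_0$, is entrywise at least $\sigma(\bfb)$ for $\sigma\in S_n$ obtained by composing $\sigma_0$ with the permutation induced by $q$.

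Now apply Proposition \ref{centered lower} to $T_0$ with $\bfb_{\calf_0}:=\sigma(\bfb)$. The hypothesis there on the boundary-length tuple is that $d_f\geq b_f$ for $f\in\calf_0-\{e_0\}$ and $d_{e_0}\geq m_{e_0}$, and since $\bd_{\calf_0}$ dominates $\sigma(\bfb)$ in every coordinate while $m_{e_0}\leq b_{e_0}$ by construction, this holds; so the proposition gives $D_{T_0}(\bd'')\geq B_{T_0}(\sigma(\bfb))$ for every $\bd''\in\overline{\mathit{Ad}}(\bd_{\calf_0})$, whence $\min\{D_{T_0}(\bd'')\,|\,\bd''\in\overline{\mathit{Ad}}(\bd_{\calf_0})\}\geq B_{T_0}(\sigma(\bfb))$. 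Concatenating with the display gives $\mathrm{area}(C)\geq B_{T_0}(\sigma(\bfb))\geq\min\{B_T(\sigma(\bfb))\,|\,T\in\calt_n,\sigma\in S_n\}$, the last minimum being attained because $\calt_n$ and $S_n$ are finite.

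Since all the analytic difficulty is already absorbed into Proposition \ref{centered lower} and Lemma \ref{trivalent}, I expect the real work here to be the geometric translation of the first paragraph and the careful tracking of permutations. The subtlest points are checking the valence-at-least-three hypothesis of Lemma \ref{trivalent} --- in particular the case of a Voronoi edge meeting $T$ at both endpoints, where the abstraction of Section \ref{notation} is doing quiet bookkeeping --- and confirming that the centered case $T=\{v_T\}$ causes no trouble, so that Lemma \ref{trivalent} still delivers a trivalent $T_0\in\calt_n$ (with Remark \ref{too good} and Lemma \ref{case 1} covering the situation in which $T_0$ itself reduces to a single trivalent vertex).
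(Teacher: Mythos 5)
Your proposal is correct and follows essentially the same route as the paper's own proof: realize $\mathrm{area}(C)=D_T(\bd)$ for the dual tree $T$ via \cite[Lemma 3.14]{DeB_Voronoi}, pass to a trivalent tree $T_0\in\calt_n$ by Lemma \ref{trivalent}, and then bound below by $B_{T_0}(\sigma(\bfb))$ using Proposition \ref{centered lower}. The extra care you take with the valence-at-least-three hypothesis, the doubled frontier edges, and the centered case is consistent with the standing conventions of Section \ref{notation} and does not change the argument.
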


\begin{proof}Let $T\subset V$ be the dual tree to $C$ (recall Definition 2.11 of \cite{DeB_Voronoi}), where $V$ is the Voronoi tessellation's one-skeleton, and enumerate the frontier $\calf$ of $T$ as $\{f_1,\hdots,f_n\}$ so that the edge of $C$ dual to $f_i$ has length at least $b_i$ for each $i$.  Let $d_i$ be the length of this edge, and let $\bd_{\calf} = (d_1,\hdots,d_n)\in(\mathbb{R}^+)^n$.  Taking $\bd_{\cale}$ to be the tuple of lengths of Delaunay edges dual to edges of $T$, Lemma 3.14 of \cite{DeB_Voronoi} implies that $\bd = (\bd_{\cale},\bd_{\calf})$ lies in $\mathit{Ad}(\bd_{\calf})\subset\overline{\mathit{Ad}}(\bd_{\calf})$, and the area of $C$ is $D_T(\bd)$.  

By Lemma \ref{trivalent} there is a tree $T_0\subset V_0$ with frontier $\calf_0$ bijective to $\calf$, such that each vertex of $T_0$ is trivalent in $V_0$, with the property that relabeling the entries of $\bd_{\calf}$ using the bijection $\calf\to\calf_0$ yields $\bd_{\calf_0}$ satisfying: 
$$D_T(\bd) \geq \min\{D_{T_0}(\bd_0)\,|\,\bd_0\in\overline{\mathit{Ad}}(\bd_{\calf_0})\}$$  
By Proposition \ref{centered lower}, this quantity in turn is bounded below by $B_{T_0}(\bfb_{\calf_0})$, where $\bfb_{\calf_0}$ is obtained from $\bfb_{\calf}$ by relabeling in the same way.  The Theorem follows.\end{proof}

\begin{remark}\label{coset}  The area function $D_0$ is symmetric in its inputs \cite[Prop.~2.3]{DeB_cyclic_geom}, and the semicyclic radius function $b_0$ is too \cite[Prop.~1.12]{DeB_cyclic_geom}.  Using these facts it is not hard to show that for any $\bfb$ and tree $T$, if edges $f_1$ and $f_2$ of $\calf$ terminate at the same vertex of $T$ then for the transposition $\tau$ that swaps the corresponding entries $b_{f_1}$ and $b_{f_2}$ of $\bfb$, $b_e(\bfb) = b_e(\tau(\bfb))$ for each $e\in\cale$ and $B_T(\bfb) = B_T(\tau(\bfb))$.  So in computing $\min\{B_T(\bfb)\}$ above, for each tree $T$ it is only necessary to test one representative of each left coset of the subgroup $S_T$ (isomorphic to a direct sum of $\mathbb{Z}_2$'s) of $S_n$ generated by such swaps.

Furthermore, an automorphism $f$ of $(T,v_T)$ has an induced action on $\bfb$ which is well-defined up to the action of $S_T$, where the edges of $\calf$ that terminate at $v$ are taken to those that terminate at $f(v)$ for each $v\in T^{(0)}$ and the corresponding entries of $\bfb$ go along for the ride.  One can show again that $b_e(\bfb) = b_e(f(\bfb))$ for each $e\in\cale$, and $B_T(\bfb) = B_T(f(\bfb))$.  Thus for each tree $T$ it is in fact only necessary to test $B_T(\sigma(\bfb))$ for representatives $\sigma$ of each orbit of the action of the automorphism group of $(T,v_T)$ on $S_n/S_T$.\end{remark}

%%%%%%%%%%%%%%%%%
\section{Practice}\label{practice}

The main goal of this section is to describe the Python module \textit{minimizer.py} and data file \textit{forest.txt}, which together give us the ability to obtain the bounds of Theorem \ref{upgrade} for arbitrary $n$-tuples, $n\leq 9$, using a computer.  First, in subsection \ref{formulas} we record some existing explicit formulas for geometric measurements of cyclic polygons.  We use these to give a completely explicit statement of Theorem \ref{upgrade}, in Corollary \ref{better upgrade}.  Then in subsection \ref{programs} we describe \textit{forest.txt} and the components of \textit{minimizer.py}.

Here is a brief explanation of how to use \textit{minimizer} to compute the bounds of Theorem \ref{upgrade}.  After downloading \textit{minimizer} and \textit{forest} you must first replace ``\texttt{yourpath}'' on line 66 of \textit{minimizer.py}, with your path to \textit{forest.txt}.  Then in a Python interpreter, import \textit{minimizer.py} and run \textit{minimizer.minimize()} on the desired tuples.  Here is a sample series of commands at the Python command prompt, to get it up and running:
\begin{quote} \texttt{$>>>$ import sys\\$>>>$ sys.path.append(`}[Your path to \textit{minimizer.py}]\texttt{')\\$>>>$ import minimizer\\$>>>$ minimizer.minimize([1,2,3,4,5])} \end{quote}
This computes Theorem \ref{upgrade}'s lower bound on the area of a five-edged centered dual two-cell with edge lengths bounded below by $(b_1,b_2,b_3,b_4,b_5)$, where $\sinh(b_i/2) = i$ for each $i$ (see Important Note \ref{heythere}).

Finally, in subsection \ref{examples} we prove Proposition \ref{bigger than}, on the relationship between Theorem \ref{upgrade} and Theorem 3.31 of \cite{DeB_Voronoi}, and consider some illustrative examples.

%%%%%%%%%%%%%
\subsection{Formulas}\label{formulas}  The reduction to the trivalent case allowed by Lemma \ref{trivalent} gives a huge savings in computational expense, since there are explicit formulas for two critical quantities: the triangle area, and the circumcircle radius of a semicyclic triangle.  For an arbitrary cyclic $n$-gon $C$ we do not know an explicit formula in terms of side length for the area of $C$, and the same is true for semicyclic circumcircle radius.  Below we cite the references we know for the results we use.  Any omissions are due to the author's ignorance, and additional references are welcome.

The ``hyperbolic Heron formula'' below was first recorded (to my knowledge) by S.~Bilinski \cite{Bilinski}.  An equivalent formulation was rediscovered by W.W.~Stothers \cite{Stothers}.

\begin{lemma}\label{heron}The area of a compact hyperbolic triangle with sides of length $a$, $b$ and $c$ is:
$$D_0(a,b,c) = 2\cos^{-1}\left(\frac{\sinh^2(a/2)+\sinh^2(b/2)+\sinh^2(c/2) + 2}{2\cosh(a/2)\cosh(b/2)\cosh(c/2)}\right) $$
Here $D_0$ refers to the area function from \cite[Prop.~2.3]{DeB_cyclic_geom}.\end{lemma}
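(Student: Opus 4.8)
The plan is to deduce the formula from the Gauss--Bonnet theorem together with the hyperbolic law of cosines. Write $\alpha$, $\beta$, $\gamma$ for the angles opposite the sides of lengths $a$, $b$, $c$, so that the area $D_0(a,b,c)$ equals the angular defect $\pi - \alpha - \beta - \gamma$. Since $D_0/2 = \pi/2 - (\alpha+\beta+\gamma)/2$, the asserted identity is equivalent to
$$\sin\left(\frac{\alpha+\beta+\gamma}{2}\right) = \frac{\sinh^2(a/2)+\sinh^2(b/2)+\sinh^2(c/2)+2}{2\cosh(a/2)\cosh(b/2)\cosh(c/2)},$$
and compactness forces $0 < \alpha+\beta+\gamma < \pi$, so both sides lie in $(0,1)$ and the principal branch of $\cos^{-1}$ recovers $D_0$ unambiguously.

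To evaluate the left side I would first record the hyperbolic half-angle formulas. Starting from $\cos\gamma = (\cosh a\cosh b - \cosh c)/(\sinh a\sinh b)$, using $\cosh a\cosh b\pm\sinh a\sinh b = \cosh(a\pm b)$, and applying the sum-to-product identity to $\cosh(a\pm b)-\cosh c$, one obtains
$$\sin^2\frac{\gamma}{2} = \frac{\sinh(s-a)\sinh(s-b)}{\sinh a\,\sinh b}, \qquad \cos^2\frac{\gamma}{2} = \frac{\sinh s\,\sinh(s-c)}{\sinh a\,\sinh b},$$
where $s = (a+b+c)/2$, together with the symmetric formulas for $\alpha$ and $\beta$. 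Expanding $\sin\bigl((\alpha+\beta+\gamma)/2\bigr)$ by the angle-addition formulas into the four terms $\sin\frac{\alpha}{2}\cos\frac{\beta}{2}\cos\frac{\gamma}{2} + \cos\frac{\alpha}{2}\sin\frac{\beta}{2}\cos\frac{\gamma}{2} + \cos\frac{\alpha}{2}\cos\frac{\beta}{2}\sin\frac{\gamma}{2} - \sin\frac{\alpha}{2}\sin\frac{\beta}{2}\sin\frac{\gamma}{2}$ and substituting the half-angle expressions, every square root resolves and the sum collapses to
$$\sin\left(\frac{\alpha+\beta+\gamma}{2}\right) = \frac{\sinh s\bigl(\sinh(s-a)\sinh(s-b)+\sinh(s-b)\sinh(s-c)+\sinh(s-c)\sinh(s-a)\bigr) - \sinh(s-a)\sinh(s-b)\sinh(s-c)}{\sinh a\,\sinh b\,\sinh c}.$$

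What remains is the purely algebraic identity that this last expression equals the target fraction. Setting $x = a/2$, $y = b/2$, $z = c/2$, so that $s = x+y+z$ and $s-a = y+z-x$, and using $\sinh a = 2\sinh x\cosh x$, the claim reduces to verifying
$$\sinh(x+y+z)\,\sigma_2 - \sigma_3 = 4\sinh x\,\sinh y\,\sinh z\,\bigl(\sinh^2 x + \sinh^2 y + \sinh^2 z + 2\bigr),$$
where $\sigma_2$ and $\sigma_3$ are the second and third elementary symmetric functions of $\sinh(y+z-x)$, $\sinh(z+x-y)$, $\sinh(x+y-z)$; this follows by expanding every hyperbolic sine of a sum and collecting terms. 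That final expansion is the only laborious step, and it is entirely routine. Since the formula is classical one could alternatively just cite \cite{Bilinski} or \cite{Stothers}, but a self-contained verification along these lines seems worthwhile.
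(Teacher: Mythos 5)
Your argument is correct. Note that the paper itself gives no proof of this lemma: it is stated with attributions to Bilinski and Stothers, so your derivation supplies a self-contained verification where the text only cites the literature. The route you take --- Gauss--Bonnet to convert area into angular defect, the hyperbolic half-angle formulas $\sin^2(\gamma/2)=\sinh(s-a)\sinh(s-b)/(\sinh a\sinh b)$ and $\cos^2(\gamma/2)=\sinh s\,\sinh(s-c)/(\sinh a\sinh b)$ (both correctly obtained from the law of cosines via $\cosh a\cosh b\pm\sinh a\sinh b=\cosh(a\pm b)$ and sum-to-product), and the addition formula for $\sin\bigl((\alpha+\beta+\gamma)/2\bigr)$ --- is the standard path to Bilinski's formula, and every square root does resolve as you claim because each half-angle lies in $(0,\pi/2)$, so the positive roots are the correct ones. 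The concluding symmetric-function identity is also right as stated; for instance at $x=y=z=t$ both sides reduce to $8\sinh^3 t+12\sinh^5 t$ using $\sinh 3t=3\sinh t+4\sinh^3 t$, and the general case is a finite (if tedious) expansion. One small remark: the assertion that the right-hand fraction lies in $(0,1)$ is not needed and is not obvious a priori; it suffices that $D_0/2\in(0,\pi/2)$, so that $\cos^{-1}\bigl(\cos(D_0/2)\bigr)=D_0/2$ once the equality of cosines is established, and the bound on the fraction then follows as a corollary rather than serving as a hypothesis.
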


Recall that we say a cyclic triangle is \textit{semicyclic} if its longest side is also a diameter of its circumcircle or, equivalently, if its side length collection $(a,b,c)$ lies in the space $\calBC_3$ of \cite[Prop.~1.11]{DeB_cyclic_geom}.  The ``Pythagorean theorem for semicyclic hyperbolic triangles'' below is recorded as Lemma 4.3 of N\"a\"at\"anen--Penner \cite{NaatPen}.  It can easily be derived from the hyperbolic law of sines (see eg.~\cite[\S 7.12]{Beardon}).

\begin{lemma}\label{pythagoras}The circumcircle radius $J$ of a compact, semicyclic hyperbolic triangle with shorter side lengths $a$ and $b$ satisfies:
$$\sinh^2 J = \sinh^2(a/2)+\sinh^2(b/2)$$
Equivalently $b_0(a,b) = 2\sinh^{-1}\left(\sqrt{\sinh^2(a/2)+\sinh^2(b/2)}\right)$, for $b_0$ as in \cite[Prop.~1.12]{DeB_cyclic_geom}.\end{lemma}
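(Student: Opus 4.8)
The plan is to realize the semicyclic triangle concretely as inscribed in a circle, decompose it into two isosceles triangles by a radius, and apply elementary hyperbolic trigonometry.

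First I would fix notation: let $\triangle ABC$ be a compact semicyclic triangle with circumcircle of radius $J$ centered at $O$, with shorter sides $a = |BC|$ and $b = |CA|$, so that the longest side $AB$ is a diameter of the circumcircle; equivalently $O$ is the midpoint of the segment $AB$. That the center lies on the longest side is exactly the content of ``semicyclic'': it follows from Proposition 1.11 (or 2.2) of \cite{DeB_cyclic_geom}, which characterizes $\calBC_3$ this way. Next I would draw the radius $OC$, splitting $\triangle ABC$ into the isosceles triangles $\triangle OBC$ (two sides of length $J$, base $a$) and $\triangle OAC$ (two sides of length $J$, base $b$). Since $A$, $O$, $B$ are collinear, the apex angles satisfy $\angle BOC + \angle AOC = \pi$.

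In each isosceles triangle the perpendicular dropped from $O$ to the base is an axis of symmetry, so it bisects both the base and the apex angle: in $\triangle OBC$ it produces a right triangle with hypotenuse $J$ and one leg of length $a/2$ opposite the angle $\tfrac12\angle BOC$ at $O$; likewise in $\triangle OAC$ a right triangle with hypotenuse $J$, leg $b/2$ opposite $\tfrac12\angle AOC$. The hyperbolic law of sines (see e.g.\ \cite[\S 7.12]{Beardon}) applied to these right triangles gives $\sinh(a/2) = \sinh J\,\sin\!\bigl(\tfrac12\angle BOC\bigr)$ and $\sinh(b/2) = \sinh J\,\sin\!\bigl(\tfrac12\angle AOC\bigr)$. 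From $\angle BOC + \angle AOC = \pi$ we get $\tfrac12\angle AOC = \tfrac\pi2 - \tfrac12\angle BOC$, so the two right-hand sides are $\sinh J$ times $\sin$ and $\cos$ of the same angle; squaring and adding yields $\sinh^2(a/2) + \sinh^2(b/2) = \sinh^2 J$, as claimed. For the reformulation in terms of $b_0$: by Proposition 1.12 of \cite{DeB_cyclic_geom} the longest side of the semicyclic triangle with shorter sides $a,b$ has length $b_0(a,b)$, and this side is a diameter, so $b_0(a,b) = 2J$; solving the displayed identity for $J$ and doubling gives $b_0(a,b) = 2\sinh^{-1}\!\bigl(\sqrt{\sinh^2(a/2)+\sinh^2(b/2)}\bigr)$.

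The computations here are all routine; the only points requiring care are invoking the semicyclic hypothesis correctly to place $O$ on the side $AB$ (this is where \cite[Prop.~1.11]{DeB_cyclic_geom} is essential — without it the argument collapses), and pinning down the exact form of the hyperbolic Pythagorean/sine relation for right triangles so that the $a/2$, $b/2$ and the half-angles line up. An equally short alternative avoids right triangles entirely: apply the hyperbolic law of cosines to $\triangle OBC$ and $\triangle OAC$ to get $\cosh a = \cosh^2 J - \sinh^2 J\cos\angle BOC$ and the analogue for $b$, add these using $\cos\angle AOC = -\cos\angle BOC$ to obtain $\cosh a + \cosh b = 2\cosh^2 J$, and then rewrite via $\cosh x = 1 + 2\sinh^2(x/2)$ and $\cosh^2 J = 1 + \sinh^2 J$.
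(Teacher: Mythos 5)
Your proof is correct, and it is precisely the argument the paper gestures at: the paper offers no proof of this lemma, only the citation to Lemma 4.3 of N\"a\"at\"anen--Penner \cite{NaatPen} together with the remark that it ``can easily be derived from the hyperbolic law of sines,'' which is exactly the decomposition-into-isosceles-triangles computation you carry out (your law-of-cosines variant works equally well). The one point worth stating explicitly, which you do handle, is that the semicyclic hypothesis places the circumcenter $O$ on the longest side, so that the two apex angles at $O$ are supplementary.
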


\begin{corollary}\label{semicyclic tri}The area of a compact, semicyclic hyperbolic triangle with shorter side lengths $a$ and $b$ is
$$ D_0(a,b,b_0(a,b)) = 2\sin^{-1}\left(\frac{\sinh(a/2)\sinh(b/2)}{\cosh(a/2)\cosh(b/2)}\right) = 2\sin^{-1}\left(\tanh(a/2)\tanh(b/2)\right)$$
\end{corollary}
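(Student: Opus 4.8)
The plan is to substitute the semicyclic Pythagorean identity of Lemma \ref{pythagoras} into the hyperbolic Heron formula of Lemma \ref{heron} and simplify. Writing $c = b_0(a,b)$, Lemma \ref{pythagoras} gives $\sinh^2(c/2) = \sinh^2(a/2) + \sinh^2(b/2)$, hence also $\cosh^2(c/2) = 1 + \sinh^2(a/2) + \sinh^2(b/2)$. Feeding this into the argument of $\cos^{-1}$ in Lemma \ref{heron}, the numerator $\sinh^2(a/2)+\sinh^2(b/2)+\sinh^2(c/2)+2$ becomes $2\bigl(1 + \sinh^2(a/2)+\sinh^2(b/2)\bigr) = 2\cosh^2(c/2)$. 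Cancelling the factor $2$ and one factor of $\cosh(c/2)$ against the denominator $2\cosh(a/2)\cosh(b/2)\cosh(c/2)$ yields
$$ D_0(a,b,c) = 2\cos^{-1}\!\left(\frac{\cosh(c/2)}{\cosh(a/2)\cosh(b/2)}\right). $$

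Next I would convert this to the claimed $\sin^{-1}$ form. Since the argument $x = \cosh(c/2)/(\cosh(a/2)\cosh(b/2))$ is positive, $\cos^{-1}(x) = \sin^{-1}\!\bigl(\sqrt{1-x^2}\bigr)$, so it suffices to compute $1-x^2$. Using $\cosh^2(c/2) = 1 + \sinh^2(a/2) + \sinh^2(b/2)$ together with $\cosh^2 = 1 + \sinh^2$ throughout, the numerator of $1-x^2$ is
$$ \cosh^2(a/2)\cosh^2(b/2) - \cosh^2(c/2) = \bigl(1+\sinh^2(a/2)\bigr)\bigl(1+\sinh^2(b/2)\bigr) - 1 - \sinh^2(a/2) - \sinh^2(b/2) = \sinh^2(a/2)\sinh^2(b/2). $$
Hence $\sqrt{1-x^2} = \sinh(a/2)\sinh(b/2)/(\cosh(a/2)\cosh(b/2)) = \tanh(a/2)\tanh(b/2)$, giving both displayed expressions in the statement.

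The only point needing a word of care is the branch of the inverse trigonometric functions. Because $a$ and $b$ are the two shorter side lengths of a genuine compact triangle, $\tanh(a/2)\tanh(b/2)$ lies strictly in $(0,1)$, so $\sin^{-1}$ is taken in its principal range and $D_0(a,b,c)/2 \in (0,\pi/2)$; correspondingly $x \in (0,1)$, so the identity $\cos^{-1}(x) = \sin^{-1}\!\bigl(\sqrt{1-x^2}\bigr)$ applies. I expect no real obstacle: once Lemmas \ref{heron} and \ref{pythagoras} are in hand, the corollary is the short algebraic simplification sketched above.
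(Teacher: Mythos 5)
Your proposal is correct and is essentially the paper's own proof: substitute the Pythagorean relation of Lemma \ref{pythagoras} into the Heron formula of Lemma \ref{heron}, simplify the argument of $\cos^{-1}$ to $\sqrt{\sinh^2(a/2)+\sinh^2(b/2)+1}/(\cosh(a/2)\cosh(b/2))$, and convert to the $\sin^{-1}$ form via $\sin^2\theta = 1-\cos^2\theta$. You simply carry out the final simplification (and the branch check) more explicitly than the paper does.
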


\begin{proof}  This follows by simply substituting the formula for $b_0$ from Lemma \ref{pythagoras} for $c$ in the formula for $D_0$ from Lemma \ref{heron}.  Letting ``$D_0$'' refer to $D_0(a,b,b_0(a,b))$ and ``$A$'' and ``$B$'' to $\sinh(a/2)$ and $\sinh(b/2)$, respectively, we have:\begin{align}\label{pythtoheron}
    \cos(D_0/2) = \frac{2(A^2+B^2+1)}{2\sqrt{A^2+B^2+1}\cosh(a/2)\cosh(b/2)} = \frac{\sqrt{A^2+B^2+1}}{\cosh(a/2)\cosh(b/2)} \end{align}
Applying the identity $\sin^2\theta = 1-\cos^2\theta$ and simplifying gives the result.\end{proof}

We now use the formulas above to give a self-contained, explicit statement of Theorem \ref{upgrade}.

\begin{corollary}\label{better upgrade}Let $C$ be a compact two-cell of the centered dual complex of a locally finite set $\cals\subset\mathbb{H}^2$ such that for some $\bfb = (b_1,\hdots,b_n)\in(\mathbb{R}^+)^n$ and enumeration of the edges of $C$, the $i^{\mathit{th}}$ edge has length at least $b_i$ for each $i$.  Then $\mathrm{area}(C)\geq\min\{B_T(\bfb)\,|\,T\in\calt_n,\sigma\in S_n\}$, where $S_n$ is the symmetric group on $n$ letters, $\sigma\in S_n$ acts on $\bfb$ by permutation of entries, and $\calt_n$ is the collection of all compact, rooted trees $T$ with root vertex $v_T$, frontier $\calf$ of order $n$, and each vertex trivalent in $T\cup\bigcup_{f\in\calf} f$; and:
$$ B_T(\bfb) = 2\cos^{-1}\left(\frac{2+\sum_{i=0}^2\sinh^2(m_{e_T^i}/2)}{2\prod_{i=0}^2\cosh(m_{e_T^i}/2)}\right) + \sum_{v\in T^{(0)}-\{v_T\}} 2\sin^{-1}\left(\tanh(b_{e_v^1}/2)\tanh(b_{e_v^2}/2)\right) $$
Here for $v\in T^{(0)}-\{v_T\}$, $e_v^1$ and $e_v^2$ are the two edges containing $v$ with the property that $v$ is closer to $v_T$ than the other endpoint.  For each edge $e$ of $T$, taking $v$ to be the further endpoint of $e$ from $v_T$, we recursively define $b_e=b_e(\bfb)$ following \cite[Lemma 3.19]{DeB_Voronoi}:
$$ b_e(\bfb) = 2\sinh^{-1}\sqrt{\sinh^2(b_{e_v^1}/2)+\sinh^2(b_{e_v^2}/2)}, $$
The three edges containing $v_T$ are enumerated as $e_T^0$, $e_T^1$, and $e_T^2$, and for each $i$, taking $i\pm 1$ modulo three, we define:
$$m_{e_T^i} = \min\left\{b_{e_T^i},2\sinh^{-1}\sqrt{\sinh^2(b_{e_T^{i+1}}/2)+\sinh^2(b_{e_T^{i-1}}/2)}\right\},$$
In particular, $m_{e_T^i} = b_{e_T^i}$ if $b_{e_T^i}$ is not maximal.\end{corollary}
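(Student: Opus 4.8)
The plan is to derive this straight from Theorem \ref{upgrade} by unwinding the definition of $B_T$ from Proposition \ref{centered lower} in the trivalent case, where every constituent cyclic polygon is a triangle, and substituting the closed forms of Lemmas \ref{heron} and \ref{pythagoras} and Corollary \ref{semicyclic tri}. Theorem \ref{upgrade} already gives $\mathrm{area}(C)\geq\min\{B_T(\sigma(\bfb))\,|\,T\in\calt_n,\sigma\in S_n\}$ with $\calt_n$ the trivalent trees, so it suffices to check that for each such $T$ and each tuple $\bfb$, the number $B_T(\bfb)$ equals the expression displayed in the statement.

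First I would fix a trivalent $T$ with root $v_T$ and recall from Proposition \ref{centered lower} that $B_T(\bfb)=D_0(m_{e_0},b_{e_1},b_{e_2})+\sum_{v\in T^{(0)}-\{v_T\}}D_0(P_v(\bfb_{\cale},\bfb_{\calf}))$, where $e_0,e_1,e_2$ are the three edges of $\cale\cup\calf$ at $v_T$ with $b_{e_0}$ maximal, the $b_e=b_e(\bfb_{\calf})$ for $e\in\cale$ come from \cite[Lemma 3.19]{DeB_Voronoi}, and $m_{e_0}=\min\{b_{e_0},b_0(b_{e_1},b_{e_2})\}$. The key preliminary step is to identify these functions $b_e$ with the recursion in the statement. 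As used in the proof of Lemma \ref{case 1}, $b_{e_v}(\bfb_{\calf})$ is characterized by the requirement that $P_v$ be semicyclic given the lengths of the other two edges $e_v^1,e_v^2$ at $v$; since the polygon $P_v$ has largest entry $d_{e_v}$ by Definition \ref{admissible closure}(1), \cite[Prop.~1.12]{DeB_cyclic_geom} forces $b_{e_v}=b_0(b_{e_v^1},b_{e_v^2})$, and Lemma \ref{pythagoras} rewrites $b_0$ as $2\sinh^{-1}\sqrt{\sinh^2(\cdot/2)+\sinh^2(\cdot/2)}$, the recursion bottoming out at frontier edges, where $b_f$ is the given entry of $\bfb$. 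The same identity evaluates $b_0(b_{e_1},b_{e_2})$ and hence $m_{e_0}$; and since $b_0(a,b)>\max\{a,b\}$, one gets $\min\{b_{e_T^i},b_0(b_{e_T^{i+1}},b_{e_T^{i-1}})\}=b_{e_T^i}$ for the two non-maximal edges at $v_T$ (this is the ``in particular'' clause), so that $D_0(m_{e_0},b_{e_1},b_{e_2})=D_0(m_{e_T^0},m_{e_T^1},m_{e_T^2})$ after reordering, which is legitimate because $D_0$ is symmetric \cite[Prop.~2.3]{DeB_cyclic_geom}.

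Then I would evaluate each term. For $v\in T^{(0)}-\{v_T\}$ trivalent, $P_v(\bfb_{\cale},\bfb_{\calf})=(b_{e_v},b_{e_v^1},b_{e_v^2})$ with $b_{e_v}=b_0(b_{e_v^1},b_{e_v^2})$, so this triple lies in $\calBC_3$ with shorter sides $b_{e_v^1}$ and $b_{e_v^2}$; Corollary \ref{semicyclic tri} then gives $D_0(P_v(\bfb_{\cale},\bfb_{\calf}))=2\sin^{-1}(\tanh(b_{e_v^1}/2)\tanh(b_{e_v^2}/2))$, which is exactly the sum in the statement. For the root term, $(m_{e_0},b_{e_1},b_{e_2})$ lies in $\calc_3\cup\calBC_3\subset\calAC_3$ (noted by construction in the proof of Lemma \ref{case 1}), hence is the side-length collection of an honest cyclic triangle, so Lemma \ref{heron} applies directly and produces the $2\cos^{-1}(\cdots)$ term with the $m_{e_T^i}$. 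Assembling the pieces and replacing $\bfb$ by $\sigma(\bfb)$ throughout identifies $B_T(\sigma(\bfb))$ with the right-hand side, which finishes the proof.

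The hard part will be the bookkeeping in the preliminary step: confirming that the $b_e$ of \cite[Lemma 3.19]{DeB_Voronoi} really are $b_0$ of the ``children'' $b$-values with the recursion terminating correctly at the frontier, and keeping careful track at $v_T$ of which of the three incident edges the minimum is actually applied to. Everything after that is a direct substitution of the explicit formulas.
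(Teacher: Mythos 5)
Your proposal is correct and matches the paper's argument, which likewise obtains the corollary from Theorem \ref{upgrade} by writing out $B_T$ from Proposition \ref{centered lower} in the trivalent case and substituting the explicit formulas of Lemmas \ref{heron} and \ref{pythagoras} and Corollary \ref{semicyclic tri}. Your extra bookkeeping (the identification of the recursion for $b_e$ with \cite[Lemma 3.19]{DeB_Voronoi} and the observation that $m_{e_T^i}=b_{e_T^i}$ for the non-maximal edges at $v_T$, justified by $b_0(a,b)>\max\{a,b\}$ and the symmetry of $D_0$) is exactly the content the paper leaves implicit.
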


\begin{proof}  This is obtained from Theorem \ref{upgrade} by writing out the formula for $B_T(\bfb)$ from Proposition \ref{centered lower} and noting that since each vertex of $T$ has valence three in $T\cup\bigcup_{f\in\calf} f$, Lemma \ref{heron} computes $D_0(m_{e_0},b_{e_1},\hdots,b_{e_{n_T}-1})$, Lemma \ref{pythagoras} computes $b_e(\bfb)$ for each $e\in\cale$, and Corollary \ref{semicyclic tri} computes $D_0(P_v(\bfb_{\cale},\bfb))$ for all $v\in T^{(0)}-\{v_T\}$.\end{proof}

%%%%%%%%%%%%%%%%%%%%
\subsection{Programs}\label{programs}  This section describes \textit{minimizer.py}, a Python module containing a script \textit{minimize()} for computing the lower bounds given by Corollary \ref{better upgrade} on areas of centered dual two-cells with at most nine edges.  The architecture of \textit{minimize()} is simple, and we write it here in pseudocode:

\begin{quote}\texttt{define minimize(bounds)\\ \indent n = length(bounds), minlb = -1 \\ \indent for tree in forest(n)\\ \indent\indent for b in permute(bounds)\\ \indent\indent\indent lb = treecrawler(tree,b)\\ \indent\indent\indent if minlb == -1 then minlb = lb\\ \indent\indent\indent else minlb = min(lb, minlb) \\ return minlb}\end{quote}

Given an input $n$-tuple ``bounds'', the idea is to loop over each tree in $\calt_n$, and for each tree $T$ over each permutation $\bfb$ of bounds, computing $B_T(\bfb)$ and comparing it to the minimum obtained from prior computations.  Here \textit{forest()} is a routine that produces all elements of $\calt_n$ for a given $n$; \textit{permute()} produces all permutations of a given tuple; and \textit{treecrawler(,)} computes $B_T(\bfb)$, given $T\in\calt_n$ and an $n$-tuple $\bfb$.

\begin{important}\label{heythere}  For a given $n$-tuple $\bfb = (b_1,\hdots,b_n)$, to obtain $\min\{B_T(\sigma(\bfb))\}$ one inputs a list $[B_1,\hdots,B_n]$ to \textit{minimize()}, where $B_i = \sinh(b_i/2)$ for each $i$.  The motivation for this choice is the nature of the explicit functions in Section \ref{formulas}.\end{important}

Below we give some details on the implementations of \textit{permute}, \textit{treecrawler}, and \textit{forest}.

\subsubsection{Permute}  The \textit{itertools} Python module contains a script \textit{permutations} that generates all permutations of a given list.  Our implementation of \textit{minimizer.py} calls this function to produce permutations of $\bfb$.  For custom applications or use in programming languages that lack such a pre-built tool, we note that many existing permutation generation algorithms can be found, eg.~in \cite{Knuth} or on Wikipedia.  We point out one concrete example: the ``Steinhaus--Johnson--Trotter algorithm'', which was later improved by S.~Even, see \cite{Even}.

Another thing to note is that generating all permutations of any tuple of bounds produces considerable redundancy in the output of \textit{treecrawler(,)}, on account of Remark \ref{coset}.  Guided by the KISS principle (and our limitations as a coder), we have elected not to attempt to remove this redundancy in our implementation.

\subsubsection{Treecrawler}  This function from \textit{minimizer.py} takes two lists as input: one, ``tree'', of length $n-3$ which encodes a rooted tree $T$ with frontier $\calf$ of order $n$, and another, ``bound'', of length $n$ which contains an entry $B_i = \sinh(b_i/2)$ for each entry $b_i$ of a tuple $\bfb$ of edge length bounds.  A couple of observations motivate choosing this form for the inputs.  First,

\begin{para}\label{numbers}Every tree in $\calt_n$ has $n-3$ edges.\end{para}

Let $\cale$ be the edge set of $T$.  Since each edge in $\cale$ contains two vertices of $T$ and each edge of $\calf$ contains exactly one, by our trivalence hypothesis the number $k$ of vertices of $T$ satisfies $3k = 2|\cale|+n$.  Since $T$ is a tree its Euler characteristic is $k - |\cale| = 1$, so substituting gives $k = n-2$ and $|\cale| = n-3$.

The second motivating observation is:

\begin{para}\label{figures}  The vertices of a compact, rooted tree $T$ with $k$ edges and root vertex $v_T$ can be enumerated as $v_0,\hdots,v_k$ so that for each $i$ and $j$, if the arc $[v_i,v_T]$ from $v_i$ to $v_T$ contains $v_j$ then $i\leq j$.  Given such a numbering, enumerate the edges of $T$ as $e_0,\hdots,e_{k-1}$ so that $e_i$ is the initial edge of $[v_i,v_T]$ for each $i<k$.  Then $T$ is determined by the $k$-tuple $(n_0,\hdots,n_{k-1})$, where for each $i$, $v_{n_i}$ is the nearer vertex of $e_i$ to $v_T$.\end{para}

One may produce the desired enumeration of the vertices of $T$ by first listing all those at maximal distance $d$ from $v_T$ in $T$, then listing those at distance $d-1$, and so forth.  Note that any such enumeration has $v_T = v_k$.  And since $T$ is a tree there is a \textit{unique} arc joining $v_i$ to $v_T$ for all $i<k$, so $e_i$ is uniquely determined for each such $i$ by the requirement above.  This yields $k$ unique edges; all of them, since $T$ is a tree with $k+1$ vertices.

Figure \ref{tree strings} depicts the rooted trees with one to three edges, with vertices enumerated and the resulting encoding tuples following \ref{figures}.

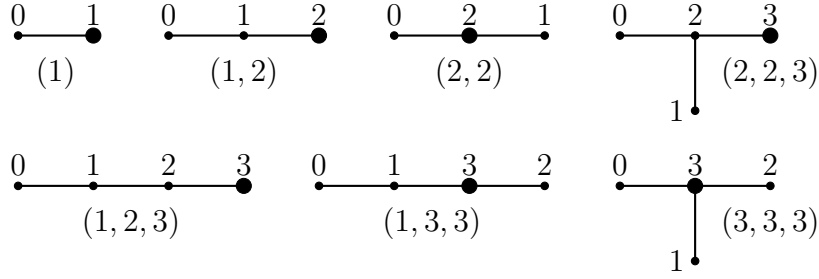
\begin{figure}
\begin{tikzpicture}

\draw [thick] (0,0) -- (1,0);
\draw [fill] (0,0) circle [radius=0.05];
\draw [fill] (1,0) circle [radius=0.1];
\node [above] at (0,0) {$0$};
\node [above] at (1,0) {$1$};
\node at (0.5,-0.5) {$(1)$};

\draw [thick] (2,0) -- (4,0);
\draw [fill] (2,0) circle [radius=0.05];
\draw [fill] (3,0) circle [radius=0.05];
\draw [fill] (4,0) circle [radius=0.1];
\node [above] at (2,0) {$0$};
\node [above] at (3,0) {$1$};
\node [above] at (4,0) {$2$};
\node at (3,-0.5) {$(1,2)$};

\draw [thick] (5,0) -- (7,0);
\draw [fill] (5,0) circle [radius=0.05];
\draw [fill] (6,0) circle [radius=0.1];
\draw [fill] (7,0) circle [radius=0.05];
\node [above] at (5,0) {$0$};
\node [above] at (6,0) {$2$};
\node [above] at (7,0) {$1$};
\node at (6,-0.5) {$(2,2)$};

\draw [thick] (8,0) -- (10,0);
\draw [fill] (8,0) circle [radius=0.05];
\draw [fill] (9,0) circle [radius=0.05];
\draw [fill] (10,0) circle [radius=0.1];
\draw [thick] (9,0) -- (9,-1);
\draw [fill] (9,-1) circle [radius=0.05];
\node [above] at (8,0) {$0$};
\node [above] at (9,0) {$2$};
\node [above] at (10,0) {$3$};
\node [left] at (9,-1) {$1$};
\node at (10,-0.5) {$(2,2,3)$};

\draw [thick] (8,-2) -- (10,-2);
\draw [fill] (8,-2) circle [radius=0.05];
\draw [fill] (9,-2) circle [radius=0.1];
\draw [fill] (10,-2) circle [radius=0.05];
\draw [thick] (9,-2) -- (9,-3);
\draw [fill] (9,-3) circle [radius=0.05];
\node [above] at (8,-2) {$0$};
\node [above] at (9,-2) {$3$};
\node [above] at (10,-2) {$2$};
\node [left] at (9,-3) {$1$};
\node at (10,-2.5) {$(3,3,3)$};

\draw [thick] (0,-2) -- (3,-2);
\draw [fill] (0,-2) circle [radius=0.05];
\draw [fill] (1,-2) circle [radius=0.05];
\draw [fill] (2,-2) circle [radius=0.05];
\draw [fill] (3,-2) circle [radius=0.1];
\node [above] at (0,-2) {$0$};
\node [above] at (1,-2) {$1$};
\node [above] at (2,-2) {$2$};
\node [above] at (3,-2) {$3$};
\node at (1.5,-2.5) {$(1,2,3)$};

\draw [thick] (4,-2) -- (7,-2);
\draw [fill] (4,-2) circle [radius=0.05];
\draw [fill] (5,-2) circle [radius=0.05];
\draw [fill] (6,-2) circle [radius=0.1];
\draw [fill] (7,-2) circle [radius=0.05];
\node [above] at (4,-2) {$0$};
\node [above] at (5,-2) {$1$};
\node [above] at (6,-2) {$3$};
\node [above] at (7,-2) {$2$};
\node at (5.5,-2.5) {$(1,3,3)$};

\end{tikzpicture}
\caption{The rooted trees with $2\leq k\leq 4$ vertices, with root vertices enlarged and vertices labeled, and the associated $(k-1)$-tuples.}
\label{tree strings}
\end{figure}

Treecrawler distributes the bounds of ``bound'' to frontier edges of $T$ in the opposite order from that of the vertices.  That is, having enumerated the vertices of $T$ as $v_0,\hdots,v_{n-3}$ following \ref{figures}, we enumerate $\calf$ as $\{f_1,\hdots,f_n\}$ so that if $i < j$ then $n_i\geq n_j$, where $v_{n_i}$ and $v_{n_j}$ are the vertices of $T$ contained $f_i$ and $f_j$, respectively.  Then we bound $d_{f_i}$ below by $b_i$ for each $i$.  (This choice is made because it is computationally least expensive.)

The idea of the program is to recursively compute $b_{e_i}$ and $D_0(P_{v_i}(\bfb))$ using the formulas of Lemmas \ref{heron} and \ref{pythagoras}.  Having done so for a given $i$, we append $b_{e_i}$ to a list ``edgelengths'' containing the lengths $e_j$, and we add $D_0(P_{v_i}(\bfb))$ to the number ``totalarea'' that records the sum of the $D_0(P_{v_j}(\bfb))$, for $j< i$.  The main observation here is that for each $i< n-3$, $v_i$ is contained in exactly two edges of $\cale\cup\calf$ not equal to $e_i$, and if either of these is of the form $e_j\in\cale$ then $j<i$.

\subsubsection{Forest}  This is a library of all elements of $\calt_n$ for $3\leq n\leq 9$.  Recall that an element $T$ of $\calt_n$ is a compact, rooted tree with frontier $\calf$ of order $n$, such that each vertex of $T$ has valence three in $T\cup\bigcup_{f\in\calf} f$, and therefore valence at most three in $T$.  In \textit{forest.txt} and Figure \ref{rooted trees}, which depicts its members, we track only the internal structure of $T$ --- the idea being that each vertex of $T$ gets as many frontier edges as necessary to bring its valence up to three in $T\cup\bigcup_{f\in\calf} f$.

By \ref{numbers} above, each $T\in\calt_n$ has $k =n-3$ edges, so \textit{forest.txt} encodes $T$ by a string of length $k$ following the scheme in \ref{figures}.  The first line of \textit{forest.txt} is a key: a ($0$-based) list $L$ whose $k$th entry $L[k]$ records the number of lines down that the codes for trees with $k$ edges begin.  In fact the codes begin one line below that: the line $L[k]$ below the first contains a single integer, which is the total number of codes for $k$-edged trees.  Each tree code occupies a single line.  It is the code described in \ref{figures} but written in reverse order (\textit{minimizer.py} un-reverses the order when reading the code).

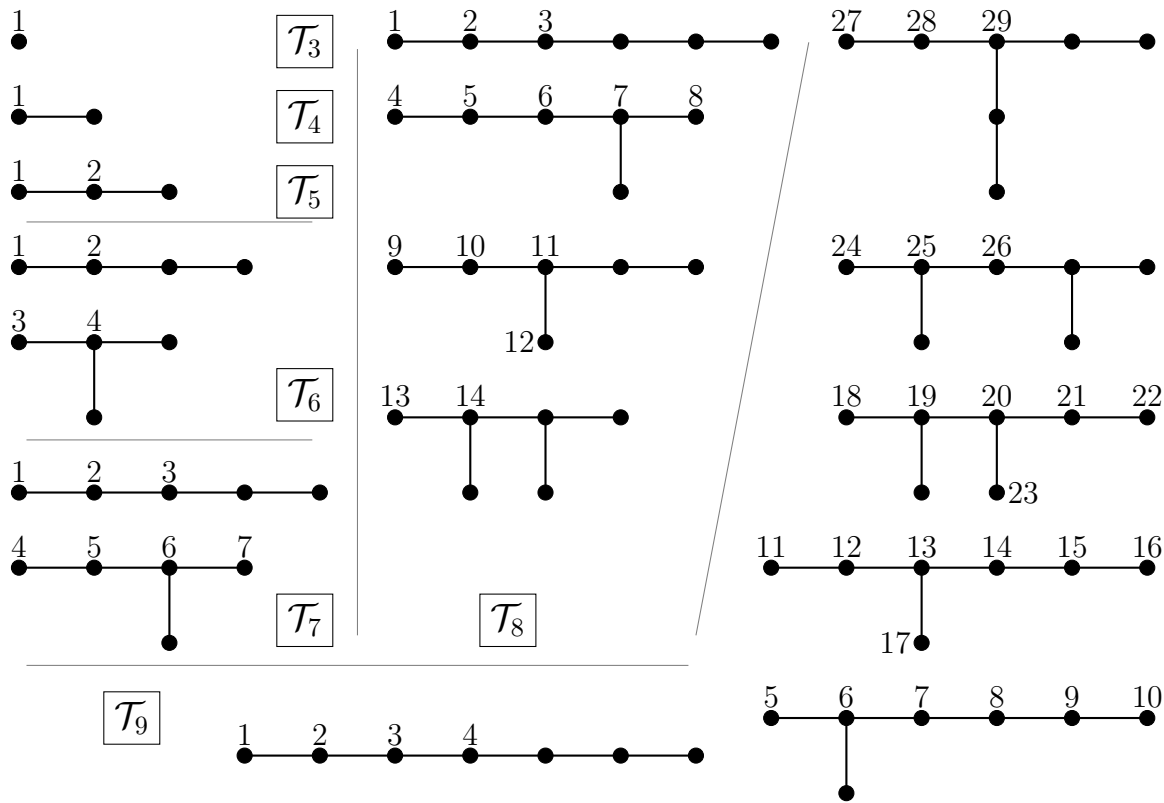
\begin{figure}
\begin{tikzpicture}

%%%%%% Fewer than three edges %%%%%
\node [rectangle, draw=black] at (3.8,0) {\large $\calt_3$};
\draw [fill] (0,0) circle [radius=0.1];
\node [above] at (0,0) {$1$};

\node [rectangle, draw=black] at (3.8,-1) {\large$\calt_4$};
\draw [thick] (0,-1) -- (1,-1);
\draw [fill] (0,-1) circle [radius=0.1];
\draw [fill] (1,-1) circle [radius=0.1];
\node [above] at (0,-1) {$1$};

\node [rectangle, draw=black] at (3.8,-2) {\large$\calt_5$};
\draw [thick] (0,-2) -- (2,-2);
\draw [fill] (0,-2) circle [radius=0.1];
\draw [fill] (1,-2) circle [radius=0.1];
\draw [fill] (2,-2) circle [radius=0.1];
\node [above] at (1,-2) {$2$};
\node [above] at (0,-2) {$1$};

\draw [draw=gray, thin] (.1,-2.4) -- (3.9,-2.4);
%%%%%%%%%%%% Three edges %%%
\node [rectangle, draw=black] at (3.8,-4.7) {\large $\calt_6$};

\draw [thick] (0,-3) -- (3,-3);
\draw [fill] (0,-3) circle [radius=0.1];
\draw [fill] (1,-3) circle [radius=0.1];
\draw [fill] (2,-3) circle [radius=0.1];
\draw [fill] (3,-3) circle [radius=0.1];
\node [above] at (1,-3) {$2$};
\node [above] at (0,-3) {$1$};

\draw [thick] (0,-4) -- (2,-4);
\draw [thick] (1,-4) -- (1,-5);
\draw [fill] (0,-4) circle [radius=0.1];
\draw [fill] (1,-4) circle [radius=0.1];
\draw [fill] (2,-4) circle [radius=0.1];
\draw [fill] (1,-5) circle [radius=0.1];
\node [above] at (1,-4) {$4$};
\node [above] at (0,-4) {$3$};

\draw [draw=gray, thin] (.1,-5.3) -- (3.9,-5.3);
%%%%%%%%%%%%  Four edges  %%%
\node [rectangle, draw=black] at (3.8,-7.7) {\large $\calt_7$};

\draw [thick] (0,-6) -- (4,-6);
\draw [fill] (0,-6) circle [radius=0.1];
\draw [fill] (1,-6) circle [radius=0.1];
\draw [fill] (2,-6) circle [radius=0.1];
\draw [fill] (3,-6) circle [radius=0.1];
\draw [fill] (4,-6) circle [radius=0.1];
\node [above] at (2,-6) {$3$};
\node [above] at (1,-6) {$2$};
\node [above] at (0,-6) {$1$};

\draw [thick] (0,-7) -- (3,-7);
\draw [thick] (2,-7) -- (2,-8);
\draw [fill] (0,-7) circle [radius=0.1];
\draw [fill] (1,-7) circle [radius=0.1];
\draw [fill] (2,-7) circle [radius=0.1];
\draw [fill] (3,-7) circle [radius=0.1];
\draw [fill] (2,-8) circle [radius=0.1];
\node [above] at (0,-7) {$4$};
\node [above] at (1,-7) {$5$};
\node [above] at (2,-7) {$6$};
\node [above] at (3,-7) {$7$};

\draw [draw=gray, thin] (4.5,-.1) -- (4.5,-7.9);
%%%%%%%%%%%%  Five edges  %%%
\node [rectangle, draw=black] at (6.5,-7.7) {\large$\calt_8$};

\draw [thick] (5,0) -- (10,0);
\draw [fill] (5,0) circle [radius=0.1];
\draw [fill] (6,0) circle [radius=0.1];
\draw [fill] (7,0) circle [radius=0.1];
\draw [fill] (8,0) circle [radius=0.1];
\draw [fill] (9,0) circle [radius=0.1];
\draw [fill] (10,0) circle [radius=0.1];
\node [above] at (5,0) {$1$};
\node [above] at (6,0) {$2$};
\node [above] at (7,0) {$3$};

\draw [thick] (5,-1) -- (9,-1);
\draw [thick] (8,-1) -- (8,-2);
\draw [fill] (5,-1) circle [radius=0.1];
\draw [fill] (6,-1) circle [radius=0.1];
\draw [fill] (7,-1) circle [radius=0.1];
\draw [fill] (8,-1) circle [radius=0.1];
\draw [fill] (9,-1) circle [radius=0.1];
\draw [fill] (8,-2) circle [radius=0.1];
\node [above] at (5,-1) {$4$};
\node [above] at (6,-1) {$5$};
\node [above] at (7,-1) {$6$};
\node [above] at (8,-1) {$7$};
\node [above] at (9,-1) {$8$};

\draw [thick] (5,-3) -- (9,-3);
\draw [thick] (7,-3) -- (7,-4);
\draw [fill] (5,-3) circle [radius=0.1];
\draw [fill] (6,-3) circle [radius=0.1];
\draw [fill] (7,-3) circle [radius=0.1];
\draw [fill] (8,-3) circle [radius=0.1];
\draw [fill] (9,-3) circle [radius=0.1];
\draw [fill] (7,-4) circle [radius=0.1];
\node [above] at (5,-3) {$9$};
\node [above] at (6,-3) {$10$};
\node [above] at (7,-3) {$11$};
\node [left] at (7,-4) {$12$};

\draw [thick] (5,-5) -- (8,-5);
\draw [thick] (6,-5) -- (6,-6);
\draw [thick] (7,-5) -- (7,-6);
\draw [fill] (5,-5) circle [radius=0.1];
\draw [fill] (6,-5) circle [radius=0.1];
\draw [fill] (7,-5) circle [radius=0.1];
\draw [fill] (8,-5) circle [radius=0.1];
\draw [fill] (6,-6) circle [radius=0.1];
\draw [fill] (7,-6) circle [radius=0.1];
\node [above] at (5,-5) {$13$};
\node [above] at (6,-5) {$14$};

\draw [draw=gray, thin] (10.5,-.01) -- (9,-7.9);
%%%%%%%%%%%% Six edges %%%
\node [rectangle, draw=black] at (1.5,-9) {\large$\calt_9$};

\draw [thick] (11,0) -- (15,0);
\draw [thick] (13,0) -- (13,-2);
\draw [fill] (11,0) circle [radius=0.1];
\draw [fill] (12,0) circle [radius=0.1];
\draw [fill] (13,0) circle [radius=0.1];
\draw [fill] (14,0) circle [radius=0.1];
\draw [fill] (15,0) circle [radius=0.1];
\draw [fill] (13,-1) circle [radius=0.1];
\draw [fill] (13,-2) circle [radius=0.1];
\node [above] at (11,0) {$27$};
\node [above] at (12,0) {$28$};
\node [above] at (13,0) {$29$};

\draw [thick] (11,-3) -- (15,-3);
\draw [thick] (12,-3) -- (12,-4);
\draw [thick] (14,-3) -- (14,-4);
\draw [fill] (11,-3) circle [radius=0.1];
\draw [fill] (12,-3) circle [radius=0.1];
\draw [fill] (13,-3) circle [radius=0.1];
\draw [fill] (14,-3) circle [radius=0.1];
\draw [fill] (15,-3) circle [radius=0.1];
\draw [fill] (12,-4) circle [radius=0.1];
\draw [fill] (14,-4) circle [radius=0.1];
\node [above] at (11,-3) {$24$};
\node [above] at (12,-3) {$25$};
\node [above] at (13,-3) {$26$};

\draw [thick] (11,-5) -- (15,-5);
\draw [thick] (12,-5) -- (12,-6);
\draw [thick] (13,-5) -- (13,-6);
\draw [fill] (11,-5) circle [radius=0.1];
\draw [fill] (12,-5) circle [radius=0.1];
\draw [fill] (13,-5) circle [radius=0.1];
\draw [fill] (14,-5) circle [radius=0.1];
\draw [fill] (15,-5) circle [radius=0.1];
\draw [fill] (12,-6) circle [radius=0.1];
\draw [fill] (13,-6) circle [radius=0.1];
\node [above] at (11,-5) {$18$};
\node [above] at (12,-5) {$19$};
\node [above] at (13,-5) {$20$};
\node [above] at (14,-5) {$21$};
\node [above] at (15,-5) {$22$};
\node [right] at (13,-6) {$23$};

\draw [thick] (10,-7) -- (15,-7);
\draw [thick] (12,-7) -- (12,-8);
\draw [fill] (10,-7) circle [radius=0.1];
\draw [fill] (11,-7) circle [radius=0.1];
\draw [fill] (12,-7) circle [radius=0.1];
\draw [fill] (13,-7) circle [radius=0.1];
\draw [fill] (14,-7) circle [radius=0.1];
\draw [fill] (15,-7) circle [radius=0.1];
\draw [fill] (12,-8) circle [radius=0.1];
\node [above] at (10,-7) {$11$};
\node [above] at (11,-7) {$12$};
\node [above] at (12,-7) {$13$};
\node [above] at (13,-7) {$14$};
\node [above] at (14,-7) {$15$};
\node [above] at (15,-7) {$16$};
\node [left] at (12,-8) {$17$};

\draw [thick] (10,-9) -- (15,-9);
\draw [thick] (11,-10) -- (11,-9);
\draw [fill] (10,-9) circle [radius=0.1];
\draw [fill] (11,-9) circle [radius=0.1];
\draw [fill] (12,-9) circle [radius=0.1];
\draw [fill] (13,-9) circle [radius=0.1];
\draw [fill] (14,-9) circle [radius=0.1];
\draw [fill] (15,-9) circle [radius=0.1];
\draw [fill] (11,-10) circle [radius=0.1];
\node [above] at (10,-9) {$5$};
\node [above] at (11,-9) {$6$};
\node [above] at (12,-9) {$7$};
\node [above] at (13,-9) {$8$};
\node [above] at (14,-9) {$9$};
\node [above] at (15,-9) {$10$};

\draw [thick] (3,-9.5) -- (9,-9.5);
\draw [fill] (3,-9.5) circle [radius=0.1];
\draw [fill] (4,-9.5) circle [radius=0.1];
\draw [fill] (5,-9.5) circle [radius=0.1];
\draw [fill] (6,-9.5) circle [radius=0.1];
\draw [fill] (7,-9.5) circle [radius=0.1];
\draw [fill] (8,-9.5) circle [radius=0.1];
\draw [fill] (9,-9.5) circle [radius=0.1];
\node [above] at (3,-9.5) {$1$};
\node [above] at (4,-9.5) {$2$};
\node [above] at (5,-9.5) {$3$};
\node [above] at (6,-9.5) {$4$};

%%%%%%%%%%%%%%%%%%
\draw [draw=gray, thin] (0.1,-8.3) -- (8.9,-8.3);

\end{tikzpicture}
\caption{The collections $\calt_n$ for $n\leq 9$.}
\label{rooted trees}
\end{figure}

Figure \ref{rooted trees} depicts the $T\in\calt_n$ for $3\leq n\leq 9$.  For each $n$, trees are numbered in the figure according to their position in the corresponding list in \textit{forest.txt}: if a vertex $v$ of a tree $T$ with $n-3$ edges there is numbered $i$ then the element of $\calt_n$ represented by $T$ with root vertex $v$ is also represented by the $i$th code from the top of the list in \textit{forest.txt} containing codes for $\calt_n$.  For instance, for the vertex labeled $14$ on the five-edged tree $T$ in the figure, enumerating the vertices of $T$ following \ref{figures} yields the five-tuple $(2,2,5,5,5)$.  The reverse of this is the fourteenth and final 5-edged tree code in \textit{forest.txt}.  

A vertex is not numbered if and only if it is equivalent to one that is under a non-trivial automorphism of the tree that contains it.  Note that the vertex numbering prescribed in \ref{figures} is not necessarily unique, so each code is one of possibly several representing the same tree.  In the case above the other possibilities are $(3,3,5,5,5)$ and $(4,4,5,5,5)$.

We record a couple of observations to support the classification of trees in Figure \ref{rooted trees}.  A tree $T$ with $k$ edges has diameter $d\leq k$.  Let $\gamma$ be an edge arc of length equal to $d$ (such an arc is the horizontal part of each tree in the Figure).  The $k-d$ edges not contained in $\gamma$ lie in a disjoint union of maximal subtrees of $T$ that do not contain edges of $\gamma$.  If $T_0$ is such a subtree then its intersection point $v_0$ with $\gamma$ can be at most $d-d_0$ away from each boundary vertex of $\gamma$, where $d_0$ is the maximal distance in $T_0$ from $v_0$ to another vertex.  Otherwise the diameter hypothesis would be violated.

It is now just a matter of enumerating the possible $d\leq k$, and for each $d$, the possibilities for $T_0$.  We note also that $d$ cannot be too small.  For instance, in the six-edge case if $d$ were equal to $3$ then three edges of $T$ would lie outside $\gamma$.  There are only two possible attachment points for the trees $T_0$, the interior vertices of $\gamma$.  So one such $T_0$ must contain at least two edges, hence it must have $d_0\geq 2$.  But this contradicts our assumption on $d$, since each interior vertex of $\gamma$ is at distance $2$ from one of its boundary vertices.

%%%%%%%%%%%%%%%%%%%%
\subsection{Examples}\label{examples}

Here we analyze a couple of examples that illustrate important basic features of the bounds of Theorem \ref{upgrade}.  Before the first, we pause to observe:

\begin{proposition}\label{bigger than}  For any $n>4$ and $d >0$ the bound $\min\{B_T(\sigma(\bfb))\}$ given by Theorem \ref{upgrade} for $\bfb = (d,\hdots,d)\in\mathbb{R}^n$ is larger than the bound $(n-2)A_m(d)$ from \cite[Theorem 3.31]{DeB_Voronoi}.  For $n=3$ and $n=4$ the two results offer equal bounds.\end{proposition}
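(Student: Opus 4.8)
The plan is to use the explicit formula for $B_T(\bfb)$ from Corollary \ref{better upgrade} together with the additive recursion $\sinh^2(b_e/2) = \sinh^2(b_{e_v^1}/2) + \sinh^2(b_{e_v^2}/2)$ supplied by Lemma \ref{pythagoras}. Since $\bfb = (d,\dots,d)$ is fixed by every $\sigma\in S_n$, the bound of Theorem \ref{upgrade} is simply $\min\{B_T(\bfb)\,|\,T\in\calt_n\}$. Each $T\in\calt_n$ has $n-3$ edges and $n-2$ vertices by \ref{numbers}, and the recursion shows $b_e\geq d$ for every edge $e$ of $T\cup\bigcup_{f\in\calf}f$, with $b_e\geq b_0(d,d)>d$ whenever $e$ is an edge of $T$. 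Throughout I write $A_m(d) = D_0(d,d,b_0(d,d)) = 2\sin^{-1}(\tanh^2(d/2))$ for the area of a semicyclic triangle with both shorter sides equal to $d$, this being the per-triangle area that $(n-2)A_m(d)$ packages in \cite[Thm.~3.31]{DeB_Voronoi}.

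I would dispatch the two base cases by direct computation. For $n=3$, $\calt_3$ consists only of the one-vertex tree, where $m_{e_T^0} = \min\{d,b_0(d,d)\} = d$, so $B_T(\bfb) = D_0(d,d,d)$, the area of the regular cyclic triangle with side $d$; this is exactly the (centered-case) bound of \cite[Thm.~3.31]{DeB_Voronoi}. For $n=4$, $\calt_4$ consists only of the one-edged tree, whose non-root vertex is a leaf and contributes $A_m(d)$, while at $v_T$ one has $m_{e_T^0} = \min\{b_0(d,d),b_0(d,d)\} = b_0(d,d)$, so the root contributes $D_0(b_0(d,d),d,d) = A_m(d)$ as well; thus $B_T(\bfb) = 2A_m(d)$, which one recognizes as the area of the regular cyclic quadrilateral with side $d$ (its diagonal is a diameter, cutting it into two semicyclic triangles with shorter sides $d$), again matching \cite[Thm.~3.31]{DeB_Voronoi}.

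For $n>4$ the heart of the matter is a per-vertex estimate proving $B_T(\bfb) > (n-2)A_m(d)$ for \emph{every} $T\in\calt_n$. By Corollary \ref{better upgrade} the summand of $B_T(\bfb)$ at a non-root vertex $v$ is $2\sin^{-1}(\tanh(b_{e_v^1}/2)\tanh(b_{e_v^2}/2))\geq A_m(d)$, with equality if and only if $b_{e_v^1}=b_{e_v^2}=d$, that is, if and only if both edges of $v$ pointing away from $v_T$ are frontier edges --- equivalently, $v$ is a leaf of $T$. The summand at $v_T$ is $D_0(m_{e_T^0},b_{e_T^1},b_{e_T^2})$; since $v_T$ lies on at least one edge of $T$, one checks from the definition of $m_{e_T^0}$ that $m_{e_T^0}$ is the longest of the three lengths, that $(m_{e_T^0},b_{e_T^1},b_{e_T^2})$ lies in $\calc_3\cup\calBC_3$, and that, sorted decreasingly, its entries each dominate those of $(b_0(d,d),d,d)$. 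Monotonicity of $D_0$ on $\calc_3\cup\calBC_3$ (\cite[Cor.~2.4]{DeB_cyclic_geom}) then gives $D_0(m_{e_T^0},b_{e_T^1},b_{e_T^2})\geq D_0(b_0(d,d),d,d) = A_m(d)$, and strict monotonicity (positivity of the partials of $D_0$ from \cite[Prop.~2.3]{DeB_cyclic_geom}) makes this strict once $v_T$ lies on at least two edges of $T$, since then $b_{e_T^1}\geq b_0(d,d)>d$. Summing, $B_T(\bfb)\geq(n-3)A_m(d)+A_m(d)=(n-2)A_m(d)$, with equality only when every non-root vertex is a leaf and $v_T$ lies on exactly one edge of $T$; but these two conditions force $T$ to have exactly two vertices, hence $n=4$. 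So for $n>4$ every $T\in\calt_n$ satisfies $B_T(\bfb)>(n-2)A_m(d)$, and minimizing over $\calt_n$ finishes the proof.

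The main obstacle I anticipate is the root-vertex estimate: verifying carefully that $m_{e_T^0}$ is the longest side and that the truncated triangle $(m_{e_T^0},b_{e_T^1},b_{e_T^2})$ stays inside $\calc_3\cup\calBC_3$, where \cite[Cor.~2.4]{DeB_cyclic_geom} is available, and isolating exactly when the inequality is strict --- in particular confirming that a very long $b_{e_T^0}$ cannot push the root's contribution below $A_m(d)$, since $m_{e_T^0}$ is capped at $b_0(b_{e_T^1},b_{e_T^2})$. The remaining bookkeeping, including the combinatorial step that equality everywhere forces the single-edged tree, is routine.
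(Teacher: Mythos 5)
Your proposal is correct and follows essentially the same route as the paper's proof: a per-vertex comparison showing each non-root vertex contributes at least $A_m(d) = D_0(d,d,b_0(d,d))$ (strictly unless it is a leaf of $T$) and the root contributes at least $A_m(d)$ (strictly unless $v_T$ has valence one in $T$), with equality everywhere forcing $T$ to have at most one edge and hence $n\leq 4$. The only cosmetic differences are that you phrase the estimates via the explicit formulas of Corollary \ref{better upgrade} rather than directly via $D_0$, $b_0$ and \cite[Cor.~2.4]{DeB_cyclic_geom}, and that you verify the $n=4$ equality as a base case rather than at the end.
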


\begin{proof} We first touch on the case $n=3$.  In this special case the lower bound of \cite[Thrm.~3.31]{DeB_Voronoi} is the area of an equilateral triangle with all side lengths $d$; ie.~$D_0(d,d,d)$ in the language of \cite[L.~2.1]{DeB_cyclic_geom}.  The only tree in $\calt_3$ is $T = \{v_T\}$, and chasing the definition of $B_T$ in Proposition \ref{centered lower} gives $B_T(d,d,d) = D_0(d,d,d)$.  We therefore assume below that $n\geq 4$.

For the given $\bfb$ we note that $\sigma(\bfb) = \bfb$ for all $\sigma\in S_n$, so in the bound of Theorem \ref{upgrade} the minimum need only be taken over $T\in\calt_n$.  By \ref{numbers} above, each $T\in\calt_n$ has $n-2$ vertices.  Thus applying the definition of $B_T$, the result will follow by observing for any such $T$ that $D_0(P_v(\bfb_{\cale},\bfb)) \geq A_m(d)$ for each $v\in T^{(0)}-\{v_T\}$; that $D_0(m_{e_0},b_{e_1},b_{e_2}) \geq A_m(d)$, where $e_0$, $e_1$ and $e_2$ in $\cale\cup\calf$ contain $v_T$ with $b_{e_0}$ maximal among the $b_{e_i}$; and that strict inequality holds for some vertex if $n>4$.

The tuple $\bfb_{\cale}\in(\mathbb{R}^+)^{\cale}$ referenced above is defined in Proposition \ref{centered lower} by $b_e = b_e(\bfb)$ for each $e\in\cale$, where $b_e\co(\mathbb{R}^+)^{\calf}\to \mathbb{R}^+$ is from \cite[Lemma 3.19]{DeB_Voronoi}.  For each $v\in T^{(0)}-\{v_T\}$, taking $e_v$ as the initial edge of the arc joining $v$ to $v_T$, $b_{e_v} = b_0(b_{e_1},b_{e_2})$ by the definition of $b_e$ in \cite[L.~3.19]{DeB_Voronoi}, where $b_0$ is the function from \cite[Prop.~1.12]{DeB_cyclic_geom} and $e_1, e_2\in\cale\cup\calf$ are the other two edges containing $v$.  Thus $D_0(P_v(\bfb_{\cale},\bfb)) = D_0(b_{e_1},b_{e_2},b_0(b_{e_1},b_{e_2}))$.

For $v$ and $e_1$, $e_2$ as above, if $e_i\in\calf$ then $b_{e_i} = d$ by our hypothesis here.  
It follows from Property (1) of \cite[L.~3.19]{DeB_Voronoi} that $b_{e_i} > d$ if $e_i\in\cale$.  Corollary 2.4 and Proposition 1.12 of \cite{DeB_cyclic_geom} together imply that if $x\leq x'$ and $y\leq y'$ then $D_0(x,y,b_0(x,y)) \leq D_0(x',y',b_0(x',y'))$, and that strict inequality holds here if $x<x'$ or $y<y'$.  Therefore $D_0(P_v(\bfb_{\cale},\bfb)) \geq D_0(d,d,b_0(d,d))$ for all $v\in T^{(0)}-\{v_T\}$, and the inequality is strict unless $v$ has valence one in $T$.

Property (1) of \cite[L.~3.19]{DeB_Voronoi} can be used here to show the stronger fact that $b_e \geq b_0(d,d)$ for each $e\in\cale$.  By its definition in Proposition \ref{centered lower} and the monotonicity of $b_0$ (\cite[Prop.~1.12]{DeB_cyclic_geom}) we also have $m_{e_0} \geq b_0(d,d)$ for $e_0$, $e_1$ and $e_2$ containing $v_T$ as above.  Therefore again $D_0(m_{e_0},b_{e_1},b_{e_2}) \geq D_0(d,d,b_0(d,d))$ by \cite[Cor.~2.3]{DeB_cyclic_geom}, with strict inequality if $v_T$ does not have valence one in $T$.

As pointed out in the statement of \cite[Thrm.~3.31]{DeB_Voronoi}, $A_m(d)$ is the area of a semicyclic triangle with two sides of length $d$.  But $b_0\co(\mathbb{R}^+)^2\to\mathbb{R}^+$ is characterized by the property that $(x,y,b_0(x,y))\in\calBC_3$ has unique largest entry $b_0(x,y)$ for all $x$ and $y$, where $\calBC_3$ is the space parametrizing semicyclic triangles, so $A_m(d) = D_0(d,d,b_0(d,d))$.  It therefore follows from above that $\min\{B_T(\sigma(\bfb))\}\geq (n-2)A_m(d)$, and that strict inequality holds unless each vertex of $T$ has valence one in $T$.  This in turn only holds if $T$ has two vertices and one edge; ie.~if $n=4$.

It is now straightforward to verify in the case $n=4$ for the lone rooted tree $T$ with one edge, and $\bfb = (d,d,d,d)$, that $B_T(\bfb) = 2 D_0(d,d,b_0(d,d)) = 2A_m(d)$.
\end{proof}

\begin{example}\label{all the same}  Here we consider the $n$-tuple $\bfb_n$ with all entries $b = 2\sinh^{-1}(1)$ (ie.~with $\sinh(b/2) = 1$) and allow $n$ to vary between $4$ and $9$.  The outputs of Theorem 3.31 of \cite{DeB_Voronoi} and Theorem \ref{upgrade} here are recorded  in the table below, in each case truncated after three decimal places.  Recall from Important Note \ref{heythere} that the input for \textit{minimizer.minimize()} corresponding to $\bfb$ is $[1,\hdots,1]$.

The ``tree number'' line below refers to the number in Figure \ref{rooted trees} (or \textit{forest.txt}) of the tree realizing $\min\{B_T(\bfb)\}$.  

\begin{table}[ht]
\begin{tabular}{r | c c c c c c}
  $n$ & 4 & 5 & 6 & 7 & 8 & 9 \\
  \hline
  $(n-2)A_m(b)$ & $2.094$ & $3.141$ & $4.188$ & $5.235$ & $6.283$ & $7.330$ \\
  $\min\{B_T(\bfb_n)\}$ & $2.094$ & $3.295$ & $4.526$ & $5.818$ & $7.107$ & $8.441$ \\
  tree number & 1 & 2 & 4 & 6 & 14 & 20 \\
  \multicolumn{7}{c}{}
\end{tabular}
\caption{The outputs of \cite[Thrm.~3.31]{DeB_Voronoi} and Theorem \ref{upgrade} on $\bfb_n$ as above.}
\label{alla the same}
\end{table}

It is not hard to show using Corollary \ref{semicyclic tri} that for our chosen $b$, $A_m(b) = \pi/3$.  So the exact value of $(n-2)A_m(b)$ is $2\pi/3$ for $n=4$, $\pi$ for $n=5$, and so on.  Note that the difference between bounds grows monotonically with $n$.

A takeaway from the proof of Proposition \ref{bigger than} is that for a particular $T$, with $\bfb=(1,\hdots,1)$, vertices with valence one in $T$ contribute the least to $B_T(\bfb)$.  It is therefore not surprising that for each $n$ the rooted tree realizing $\min\{B_T(\bfb_n)\}$ has the maximum possible number of vertices of valence one among all trees in $\calt_n$.  Note moreover that the root vertex of each such example has maximal valence.\end{example}

\begin{example}\label{one big}  Now we explicitly analyze the simplest case with unequal entries, taking $\bc = (b,b,b,x)$ with $\sinh(b/2) = 1$ as in the previous example.  There is only one tree $T\in\calt_4$, and following the scheme of \textit{treecrawler} we label its frontier as $\{f_0,f_1,f_2,f_3\}$ so that $f_0$ and $f_1$ terminate at the root vertex $v_T$ and $f_2$ and $f_3$ at the other one.  Since $\bc$ has many identical entries, by Remark \ref{coset} there are only two permutations of $\bc$ to check: the identity,  which assigns $x$ to $f_3$ and $b$ to all others, and a permutation $\sigma$ that assigns $x$ to $f_0$.  These are pictured in Figure \ref{one big fig}.

\begin{figure}
\begin{tikzpicture}

\node [left, rectangle, draw=black] at (-1.5,0) {Identity case};
%%%%%%%%%%%%%%%%%%%%%%%%%%%%
\draw [thick] (0,0) -- (2,0);
\draw [thick] (1,-0.1) -- (1.1,0) -- (1,0.1);
\draw [dashed] (0,0) -- (-0.5,.85);
\draw [dashed] (0,0) -- (-0.5,-.85);
\draw [dashed] (2,0) -- (2.5,.85);
\draw [dashed] (2,0) -- (2.5,-.85);
\draw [fill] (0,0) circle [radius=0.1];
\draw [fill] (2,0) circle [radius=0.1];
\node [left] at (-0.5,.85) {$1$};
\node [left] at (-0.5,-.85) {$X$};
\node [right] at (2.5,.85) {$1$};
\node [right] at (2.5,-.85) {$1$};
\node [above] at (1,0.1) {$\sqrt{X^2+1}$};

\node [right, rectangle, draw=black] at (8.5,0) {Case $\sigma$};
%%%%%%%%%%%%%%%%%%%%%%%%%%%%
\draw [thick] (5,0) -- (7,0);
\draw [thick] (6,-0.1) -- (6.1,0) -- (6,0.1);
\draw [dashed] (5,0) -- (4.5,.85);
\draw [dashed] (5,0) -- (4.5,-.85);
\draw [dashed] (7,0) -- (7.5,.85);
\draw [dashed] (7,0) -- (7.5,-.85);
\draw [fill] (5,0) circle [radius=0.1];
\draw [fill] (7,0) circle [radius=0.1];
\node [left] at (4.5,.85) {$1$};
\node [left] at (4.5,-.85) {$1$};
\node [right] at (7.5,.85) {$1$};
\node [right] at (7.5,-.85) {$X$};
\node [above] at (6,0.1) {$\sqrt{2}$};

\end{tikzpicture}
\caption{The two cases of Example \ref{one big}.}
\label{one big fig}
\end{figure}
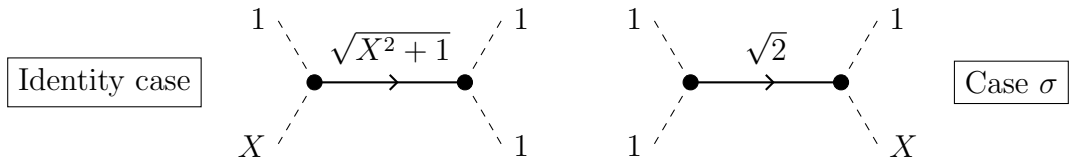

Frontier edges are dashed in the figure, and each such edge $f_i$ is labeled with $\sinh(c_{f_i}/2)$, where $c_f$ is the corresponding entry of $\bc$ (or $\sigma(\bc)$ in the other case).  In particular, $X = \sinh(x/2)$.  The edge $e$ of $T$ points toward $v_T$ (which hence is the right vertex in each case).  It is labeled with $b_e = b_e(\bc)$ as prescribed in Corollary \ref{better upgrade}.

In order to compute $B_T(\bc)$ and $B_T(\sigma(\bc))$ we must first determine the $m_{e_i}$ as in Corollary \ref{better upgrade}, where $e_0$, $e_1$ and $e_2$ are the edges containing $v_T$.  We will take $e_0 = f_0$, $e_1 = f_1$, and $e_2 = e$ in each case.  Then in the identity case, $\sinh(m_{e_2}/2) = \sqrt{X^2+1}$ if $X\leq1$ and otherwise $\sinh(m_{e_2}/2) = \sqrt{2}$, and $m_{e_i} = b$ for $i=0,1$ and any $x$.  This gives:\begin{align*}
  & B_T(\bc) = \left\{\begin{array}{ll}
      2\sin^{-1}\left(\frac{X}{\sqrt{2(X^2+1)}}\right) + 2\cos^{-1}\left(\frac{5+X^2}{4\sqrt{X^2+2}}\right) &
          X \leq 1 \\
      2\sin^{-1}\left(\frac{X}{\sqrt{2(X^2+1)}}\right) + 2\sin^{-1}\left(\frac{1}{2}\right) & X > 1
      \end{array}\right. \end{align*}
In each case above the contribution from $v_T$ is the second summand.  For $X >1$ we simplify by using the observation from the proof of Lemma \ref{semicyclic tri}, that for inputs in $\calBC_3$, Heron's formula reduces to the one from that result.

In case $\sigma$ we have $m_{e_0} = x$, $m_{e_1} = b$ and $\sinh(m_{e_2}/2) = \sqrt{X^2+1}$ if $X\leq 1$; $m_{e_0} = x$, $m_{e_1} = b$ and $\sinh(m_{e_2}/2) = \sqrt{2}$ for $1< X\leq \sqrt{3}$; and $\sinh(m_{e_0}/2) = \sqrt{3}$, $m_{e_1} = b$ and $\sinh(m_{e_2}/2)=\sqrt{2}$ if $X> \sqrt{3}$.  Applying this gives:\begin{align*} & B_T(\sigma(\bc)) = \left\{\begin{array}{ll}
      2\sin^{-1}\left(\frac{1}{2}\right) + 2\sin^{-1}\left(\frac{X}{\sqrt{2(X^2+1)}}\right) & X \leq 1 \\
      2\sin^{-1}\left(\frac{1}{2}\right) + 2\cos^{-1}\left(\frac{5+X^2}{2\sqrt{6(X^2+1)}}\right) & 
          1< X \leq \sqrt{3} \\
      2\sin^{-1}\left(\frac{1}{2}\right) + 2\sin^{-1}\left(\frac{1}{\sqrt{3}}\right) & X > \sqrt{3} \end{array}\right.\end{align*}
Again the contribution from $v_T$ is the second summand.  To understand which of $B_T(\bc)$ or $B_T(\sigma(\bc))$ is larger for various values of $x$, it helps to look at them in a different way.  Below to save space we replace $a$ by $\sinh(a/2)$ for any entry $a$ of an input to the area function $D_0$.\begin{align*}
 & B_T(\sigma(\bc)) - B_T(\bc) = D_0(1,1,\sqrt{2}) - D_0(1,X,\sqrt{2}) > 0 && \mbox{for}\ X\leq 1 \\
 & B_T(\bc) - B_T(\sigma(\bc)) = D_0(1,X,\sqrt{X^2+1}) - D_0(1,X,\sqrt{2}) > 0 && \mbox{for}\ 1<X\leq \sqrt{3} \\
 & B_T(\bd) - B_T(\sigma(\bc)) = D_0(1,X,\sqrt{X^2+1}) - D_0(1,\sqrt{3},\sqrt{2}) > 0 && \mbox{for}\ X>\sqrt{3} \end{align*}
The inequalities in each case follow from Proposition 2.3 of \cite{DeB_cyclic_geom}.  Therefore as long as $X\leq 1$, $\min\{B_T(\bc),B_T(\sigma(\bc))\} = B_T(\bc)$, and otherwise it is $B_T(\sigma(\bc))$.  

Note that the given bound is constant for $x\geq 2\sinh^{-1}(\sqrt{3})$.  Its value, truncated after three decimal places, is $2.278$ for such $x$.  This may be compared with the bound of $2.094$ given for $\bfb_4 = (b,b,b,b)$ in Example \ref{alla the same}.
\end{example}

\begin{remark}\label{reasonable}  The cases of Example \ref{one big} where $X\leq \sqrt{3}$ are distinguished by the fact that taking $\bfb_{\calf} = \bc$ or $\bfb_{\calf} = \sigma(\bc)$ as appropriate to minimize $B_T$, the tuple $\bfb_{\cale}$ produced by Proposition \ref{centered lower} has $(\bfb_{\cale},\bfb_{\calf})\in\overline{\mathit{Ad}}_T(\bfb)$.  Another way of saying this, again using the notation of Proposition \ref{centered lower}, is that $m_{e_0} = b_{e_0}$.

We say that an arbitrary $n$-tuple $\bfb$ with the analogous property is ``geometrically reasonable''.  That is, for $T\in\calt_n$ and $\sigma\in S_n$ such that $B_T(\sigma(\bfb))$ realizes the bounds of Theorem \ref{upgrade}, we should have $(\bfb_{\cale},\sigma(\bfb))\in\overline{\mathit{Ad}}_T(\sigma(\bfb))$.  The reason is that in this case there is a metric triangle complex $\mathfrak{T}$ with combinatorics prescribed by $T$ and geometry by $(\bfb_{\cale},\sigma(\bfb))$, and a map $\mathfrak{T}\to\mathbb{H}^2$ whose image we expect in many cases to be a centered dual cell with edge length collection $\sigma(\bfb)$ and area equalling the bound.

The idea here is that $\mathfrak{T}$ has a triangular face for each $v\in T^{(0)}$ which is a hyperbolic triangle with edge length collection $P_v(\bfb_{\cale},\sigma(\bfb))$, and for each edge $e$ of $T$ the faces of $\mathfrak{T}$ corresponding to its endpoints are glued along their sides with length $b_e$.  A map $\mathfrak{T}\to\mathbb{H}^2$ is determined by choosing an isometric embedding of $P_{v_T}(\bfb_{\cale},\sigma(\bfb))$ and analytically continuing outward from $v_T$ in $T$, forcing the restriction to each $P_v(\bfb_{\cale},\sigma(\bfb))$ to be an isometry.  This map is therefore a local isometry on the interior of $\mathfrak{T}$.

Such a map may fail to be an isometry if there is branching at the vertices, or if different arms of $T$ determine regions of $\mathfrak{T}$ with overlapping images.  Even in the absence of these, a vertex of some $P_v$ may end up inside the circumcircle of some other $P_w$.  But for many geometrically reasonable $\bfb$ these pathologies will not occur, and for these the image of $\mathfrak{T}$ will be a centered dual cell of the set of images $\cals$ of its vertices.\end{remark}

\bibliographystyle{plain}
\bibliography{centered_bound}

\end{document}